\newtheorem{theorem}{Theorem}[section]
\newtheorem{lemma}[theorem]{Lemma}
\newtheorem{proposition}[theorem]{Proposition}
\newtheorem{corollary}[theorem]{Corollary}
\theoremstyle{plain}
\newtheorem{definition}[theorem]{Definition}
\newtheorem{example}[theorem]{Example}
\newtheorem{remark}[theorem]{Remark}
\numberwithin{equation}{section}
\newcommand{\rank}{\operatorname{rk}}
\newcommand{\rk}{\operatorname{rk}}
\renewcommand{\Im}{\operatorname{Im}}
\newcommand{\id}{\operatorname{id}}
\newcommand{\Ker}{\operatorname{Ker}}
\renewcommand{\dim}{\operatorname{dim}}
\newcommand{\codim}{\operatorname{codim}}
\newcommand{\Sym}{\operatorname{Sym}}
\newcommand{\Hom}{\operatorname{Hom}}
\newcommand{\cone}{\operatorname{cone}}
\newcommand{\C}{{\mathbb{C}}}
\newcommand{\Z}{{\mathbb{Z}}}
\newcommand{\Q}{{\mathbb{Q}}}
\renewcommand{\H}{{\mathbb{H}}}
\newcommand{\R}{{\mathbb{R}}}
\newcommand{\jhat}{\hat\jmath}
\newcommand{\bigmid}{\hs\Big{|}\hs}
\newcommand{\subs}{\subset}
\newcommand{\hookto}{{\hookrightarrow}}
\newcommand{\impl}{\Rightarrow}
\newcommand{\becircled}{\mathaccent "7017}
\newcommand{\hs}{\hspace{3pt}}
\newcommand{\M}{\mathfrak{M}}
\newcommand{\N}{\mathfrak{N}}
\newcommand{\D}{\Delta}
\newcommand{\sig}{\sigma}
\newcommand{\Sig}{\Sigma}
\renewcommand{\t}{\mathfrak{t}}
\newcommand{\IH}{I\! H}
\newcommand{\IC}{\mathop{\mathbf{IC}}\nolimits}
\renewcommand{\cal}{\mathcal}
\newcommand{\cA}{{\cal A}}
\newcommand{\cB}{{\cal B}}
\newcommand{\cC}{{\cal C}}
\newcommand{\cE}{{\cal E}}
\newcommand{\cF}{{\cal F}}
\newcommand{\cH}{{\cal H}}
\newcommand{\cI}{{\cal I}}
\newcommand{\cL}{{\cal L}}
\newcommand{\cM}{{\cal M}}
\newcommand{\cR}{{\cal R}}
\newcommand{\cS}{{\cal S}}
\newcommand{\ol}{\overline}
\newcommand{\sS}{\mathscr S}
\newcommand{\GB}{\Gamma_{\! B}}
\newcommand{\la}{\langle}
\newcommand{\ra}{\rangle}
\newcommand{\wt}{\widetilde}
\newcommand{\wh}{\widehat}
\newcommand{\md}{\mathrm -mod}
\newcommand{\bdy}{{\partial}}
\newcommand{\Span}{\mathop{\mathrm{span}}}
\newcommand{\Hilb}{\mathop{\rm Hilb}}
\newcommand{\Loc}{\mathop{\rm Loc}}
\newcommand{\Loch}{\mathop{\wh{\rm Loc}}}
\newcommand{\be}{{e}}
\renewcommand{\udot}{{\scriptscriptstyle \bullet}}
\newcommand{\bigmod}{\big{/}}
\newcommand{\sgn}{\operatorname{sgn}}
\newcommand{\ICxIC}{\IC\otimes\IC}
\renewcommand{\emptyset}{\varnothing}
\newcommand{\ilim}{\mathop{\varprojlim}\limits}
\newcommand{\tV}{{\wt V}}
\newcommand{\tcH}{{\wt\cH}}
\newcommand{\hkv}{V^{\text{hk}}}
\newcommand{\hkhi}{H^{\text{hk}}_i}
\newcommand{\hkh}{H^{\text{hk}}}
\newcommand{\hkhf}{H^{\text{hk}}_{F}}
\newcommand{\circhkhf}{\becircled{H}^{\text{hk}}_{F}}
\newcommand{\surj}{\twoheadrightarrow}
\newcommand{\rc}{\rho^{}_{\C}}
\newcommand{\ulA}{{\underline{A}}}
\newcommand{\qed}{\hfill \mbox{$\Box$}\medskip\newline}
\newenvironment{proof}{\noindent {\bf Proof:}}{\qed \par}
\begin{document}

\spacing{1.1}
\noindent
{\Large \bf The hypertoric intersection cohomology ring}
\bigskip\\
{\bf Tom Braden}\footnote{Supported in part by NSF grant DMS-0201823.}\;\;\;\texttt{braden@math.umass.edu
}\\
Department of Mathematics and Statistics, University of Massachusetts, Amherst, MA 01003\\
{\bf Nicholas Proudfoot}\footnote{Supported in part by an NSF
Postdoctoral Research Fellowship and NSF grant DMS-0738335.}\;\;\;\texttt{njp@uoregon.edu}\\
Department of Mathematics, University of Oregon, Eugene, OR 97403
\bigskip
{\small
\begin{quote}
\noindent {\em Abstract.} 
% We develop a theory of minimal extension sheaves on linear posets
% which generalizes the existing notion on fans, along with the notion of
%We define a localization functor from sheaves on a sufficiently nice stratified spaces to sheaves on a linear poset.
We present a functorial computation of the equivariant intersection cohomology of
a hypertoric variety, and endow it with a natural ring structure.  When the
hyperplane arrangement associated with the hypertoric variety is unimodular,
we show that this ring structure is induced by a ring structure on 
the equivariant intersection cohomology sheaf in the equivariant derived category.
The computation is given in terms of a localization functor which takes
equivariant sheaves on a 
sufficiently nice stratified space to sheaves on a poset.
\end{quote}
}

A hypertoric variety % $\M_\cH$ 
is a symplectic algebraic variety, equipped with a torus action, whose structure is
determined by the geometry and combinatorics of a rational hyperplane
arrangement %$\cH$ 
in much the same way that a toric variety is determined
by a rational convex polyhedron (or, more generally, a rational fan).  
Since hypertoric varieties were first introduced by Bielawski and Dancer \cite{BD}, 
many of their algebraic invariants have been computed 
in terms of the associated arrangements.  In particular, combinatorial formulas
have been given for the ordinary and equivariant cohomology rings 
of a smooth hypertoric variety \cites{HS,Ko,HP}, 
and for the intersection cohomology Betti numbers of a singular
affine hypertoric variety \cite{PW}.

In this paper, we refine the results of \cite{PW} to give a
combinatorial computation of the equivariant intersection  
cohomology groups of a hypertoric variety $\M_\cH$ associated to an arbitrary
rational hyperplane arrangement $\cH$ (Theorem \ref{main theorem 1}). 
%This computation is functorial, in the sense that there is a distinguished
%isomorphism between the groups we compute (more precisely, they are modules
%over the ring $H^\udot_T(pt)$) and the equivariant intersection cohomology groups. 
We use this to prove a conjecture of \cite[6.4]{PW}, which states
that the intersection cohomology of a hypertoric variety has a natural ring structure
(Corollary \ref{it's a ring}).  
In the special case where $\cH$ is central and unimodular
(which is equivalent to saying that $\M_\cH$ is affine and 
has a hypertoric resolution of singularities), we show
that this ring structure exists on the deepest possible level, namely on
the equivariant IC sheaf in the equivariant derived category (Theorem \ref{IC is ring object}).
The unit element in this ring structure is given by the
natural map from the constant equivariant sheaf, which 
implies that our ring structure ``behaves like a cup product".  In particular, for example,
it implies that the restriction map from the equivariant intersection cohomology of $\M_\cH$ 
to the equivariant cohomology of the generic stratum will be a ring homomorphism.

We prove these results using a general notion, which we develop in 
Section \ref{linear posets section}, of localization from $T$-equivariant 
constructible sheaves on an equivariantly stratified $T$-space to sheaves on a poset 
whose elements index the strata, equipped with a linear structure that 
keeps track of the stabilizer on the associated stratum.
In this framework, we identify the total equivariant intersection cohomology group
$\IH_T^\udot(\M_\cH)$ with the space of
sections of a sheaf $\cL$, called a minimal extension sheaf, on the poset $L_\cH$ of flats
of $\cH$.  The flats index the strata of a natural 
stratification of $\M_\cH$, and we show that the stalk of $\cL$ at any flat 
is canonically isomorphic to the
``local" equivariant intersection cohomology of $\M_\cH$ at any $T$-orbit in
the corresponding stratum.  Furthermore, the restriction maps in the sheaf 
coincide with the natural maps between local intersection cohomology groups
of comparable strata (in fact, this last property is enough to 
characterize the isomorphism between the space of global sections of $\cL$ and
$\IH^\udot_T(\M)$ up to 
multiplication by a scalar). 
This formalism of localization functors
provides a unified setting in which to understand our work along with a number of 
other theories involving localization of equivariant cohomology or equivariant sheaves, including
\cites{BBFK1, BBFK2, BM, BreLu, GKM}.

The concept of a minimal extension sheaf was originally 
introduced for sheaves on fans in \cites{BBFK1, BreLu}, where the
sections of such sheaves give the equivariant intersection 
cohomology groups of toric varieties.  We generalize this notion to arbitrary linear posets,
focusing on the case of $L_\cH$.
This perspective turns out to be useful for understanding a number of important
aspects of the topology of $\M_\cH$.
In addition to our main theorem identifying $\IH_T^\udot(\M_\cH)$ with the space of sections of $\cL$, we show how
to use $\cL$ to compute the intersection cohomology Morse groups of $\M_\cH$ in the sense of
stratified Morse theory for a generic projection of the moment map on our hypertoric variety.
We also give a combinatorial version of the Beilinson-Bernstein-Deligne decomposition theorem
for the canonical orbifold resolution of $\M_\cH$.
Both of these computations are important for describing a duality relating
perverse sheaves on hypertoric varieties defined by 
Gale dual arrangements, which will appear in the forthcoming paper \cites{BLPW}. 

Part of our motivation for studying hypertoric varieties is that they share many geometric
properties with other symplectic algebraic varieties that play prominent roles in representation
theory and physics, such as quiver varieties,
moduli spaces of Higgs bundles on a curve, and Hilbert schemes of points on symplectic
surfaces.  In light of Theorem \ref{IC is ring object}, we are led to ask whether there is
a broader class of symplectic varieties with natural ring structures on their IC sheaves.  
In \cite[3.4.4]{Pr}, the second author conjectures that many symplectic quotients of vector spaces, 
including all quiver varieties, admit such structures, and he uses the results of this paper
to give an explicit conjectural presentation for the intersection cohomology 
rings that would arise in this way.

Finally, if $\cH$ is an arrangement over a field other than $\Q$ 
and cannot be defined over $\Q$,
then there is no associated hypertoric variety, but our theory of minimal
extension sheaves still makes sense.
For this reason the global sections of the minimal extension sheaf
$\cL$ can be called the intersection cohomology of the 
arrangement, regardless of whether there is a hypertoric interpretation.  
This is analogous to what happens for toric varieties,
where Karu's theorem \cite{Ka} implies that the intersection 
cohomology of a non-rational fan, defined by means of a minimal
extension sheaf, satisfies all of the expected properties 
despite the lack of an associated toric variety.  

\bigskip
\noindent{\em Acknowledgments}.  The first author would like to thank the 
hospitality of the Institute for Advanced Studies at the Hebrew University,
Jerusalem, where some of these results were worked out.  

\begin{section}{Linear posets and localization}\label{linear posets section}
In this first section, we construct a localization functor from the equivariant derived category
of constructible sheaves on a sufficiently nice stratified space to the category of modules
over the structure sheaf of a linear poset.  We also give some basic definitions, results,
and examples pertaining to minimal extension sheaves on a linear poset.

\subsection{Sheaves on posets}\label{sheaves on posets}
Our 
% combinatorial 
results will be expressed in the language of 
sheaves on finite posets, which we now review.  Suppose that
$(P, \le)$ is a finite poset.  We put a topology on it by 
declaring $U\subset P$ to be open if for every $x \in U$ 
and $y \le x$, we also have $y \in U$.  Then for any
$x\in P$, the set $U_x = \{y\in P \mid y \le x\}$
is the smallest open set containing $x$.
We can also think of $P$ as a small category with the properties that
each Hom set has at most one element, and no two distinct objects are isomorphic; here 
$x\le y$ if and only if there exists a morphism $y \to x$.
\begin{proposition}\label{stalks are enough}
Let $\cC$ be an abelian category.
The category of sheaves on $P$ of objects in $\cC$
is equivalent to the category of functors $P \to \cC$.  In other
words, a sheaf $\cS$ is given by an object $\cS(x)$ for each 
$x\in P$, together with restriction maps $r_{xy}\colon \cS(y) \to \cS(x)$
for every $x,y \in X$ with $x \le y$, satisfying $r_{xy}r_{yz} = r_{xz}$
whenever $x \le y \le z$.
\end{proposition}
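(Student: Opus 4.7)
The plan is to construct explicit quasi-inverse functors in both directions.

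In one direction, I would send a sheaf $\cS$ to the functor $x \mapsto \cS(U_x)$, with the morphism $r_{xy} : \cS(U_y) \to \cS(U_x)$ for $x \le y$ coming from the inclusion $U_x \subseteq U_y$ (valid because every $z \le x$ also satisfies $z \le y$); the cocycle condition $r_{xy} r_{yz} = r_{xz}$ is automatic from functoriality of sheaf restriction. In the other direction, given a functor $F : P \to \cC$, I would define a presheaf $\cS_F$ by setting
\[
\cS_F(U) \;=\; \varprojlim_{x \in U} F(x),
\]
with the limit taken over $U$ regarded as a full subposet of $P$. For $V \subseteq U$, the restriction map would come from the universal property of limits applied to the subdiagram $F|_V$.

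The step that requires care is verifying that $\cS_F$ is actually a sheaf and, symmetrically, that an arbitrary sheaf $\cS$ satisfies $\cS(U) \cong \varprojlim_{x \in U} \cS(U_x)$ for every open $U$. The key property powering both arguments is that in our topology every point $x$ of an open set $U$ has a smallest open neighborhood $U_x$, which is contained in any open subset of $U$ that contains $x$. Given a matching family $(s_i) \in \prod_i \cS_F(V_i)$ for an open cover $U = \bigcup_i V_i$, each $x \in U$ therefore picks out a well-defined element $s(x) \in F(x)$ (independent of the chosen $V_i$ by the matching condition), and the compatibility required for an element of $\varprojlim_{x \in U} F(x)$ is automatic: for $y \le x$ in $U$, one has $y \in U_x \subseteq V_i$ for any $V_i$ containing $x$, so both $x$ and $y$ lie in the same $V_i$ and the compatibility built into $s_i$ transfers. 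Uniqueness of the gluing is immediate. For the converse, the sheaf axiom for a genuine sheaf $\cS$ applied to the cover $U = \bigcup_{x \in U} U_x$, together with a matching analysis of the intersections $U_x \cap U_y$ (themselves covered by $\{U_z\}_{z \le x,\, z \le y}$), identifies $\cS(U)$ with the compatible families $\varprojlim_{x \in U} \cS(U_x)$.

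Finally, the two constructions are mutually inverse: on the one hand $\cS_F(U_x) = \varprojlim_{y \le x} F(y) \cong F(x)$, since $x$ is the maximum element of $U_x$ and $F(x)$ is thus terminal in the defining diagram; on the other hand, the sheaf identification $\cS(U) \cong \varprojlim_{x \in U} \cS(U_x)$ exhibits the other composite as the identity up to natural isomorphism, yielding the desired equivalence of categories.
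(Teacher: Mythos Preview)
Your proposal is correct and follows the same approach as the paper: the paper defines exactly the same two constructions (sections over $U_x$ in one direction, projective limit over $x\in U$ in the other) but omits the verifications you supply. Your additional checks that the limit construction yields a sheaf, that $\cS(U)\cong\varprojlim_{x\in U}\cS(U_x)$ via the cover $\{U_x\}_{x\in U}$, and that the two constructions are quasi-inverse are all sound and simply fill in details the paper leaves to the reader.
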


\begin{proof}
Given a sheaf $\cS$, we obtain the 
data $\{\cS(x), r_{xy}\}$ by setting $\cS(x) = \cS(U_x)$,
the sections on the open set $U_x$, and letting $r_{xy}$ be the 
restriction map.  In the other direction, given the objects
$\cS(x)$ and maps $r_{xy}$, the sections $\cS(U)$ of the sheaf
$\cS$ on an open set $U$ is defined to be the projective limit
$\ilim_{x\in U} \cS(x)$.
\end{proof}

We will use the following shorthand: for $x\in P$, we let $\bdy x$
denote the ``punctured neighborhood'' 
\[\bdy x := U_x \smallsetminus \{x\} = \{y \in P \mid y < x\}.\]
If $\cS$ is a sheaf on $P$, we let 
$\bdy_x\colon\cS(x) \to \cS(\bdy x)$ denote the 
restriction map from $U_x$ to $\bdy x$, and we put $\cS(x,\bdy x) = \ker \bdy_x$.

% \subsection{Linear posets} \label{linear posets}
Fix a field $k$ (later we will take $k = \R$). 
We define a {\bf linear poset} to be a pair $(P,V)$ of a finite poset $P$
together with a sheaf $V$ of finite-dimensional $k$-vector spaces on it.  
In other words $V$ is a
functor $P \to \operatorname{\mathbf{Vect}}_k$.  
In all of our examples the restriction maps $r_{xy}$ will all be 
surjective. 
Given a linear poset $(P,V)$, its {\bf structure sheaf} $\cA = \cA_P$
is the sheaf of graded polynomial rings $\Sym V$, i.e.\ the sheaf
whose stalk $\cA(x)$ at $x\in P$ is equal to $\Sym V(x)$.  We use the grading
where elements of $V(x)$ have degree two.  Let $\cA\md$ denote the 
category of finitely generated graded $\cA$-modules; we will refer
to them simply as $\cA$-modules. 

Our examples of linear posets will come from spaces with a 
torus action, in the following way.  Let $\M$ be a Hausdorff
topological space on which a compact abelian Lie group $T$ acts 
(in the examples we study $T$ will be a torus, but these 
initial definitions make sense even when $T$ is disconnected).
We define a  \textbf{{\em T}-decomposition} of $\M$ indexed by 
a poset $P$ to be a collection
$\sS = \{S_x\}_{x \in P}$ of locally closed $T$-invariant 
subspaces partitioning $\M$ so that
\begin{itemize}
\item for every $x, y \in P$,\, $S_y \cap \overline{S_x} \ne 
\emptyset \Longleftrightarrow S_y \subset \overline{S_x} \Longleftrightarrow x \le y$, 
and\footnote{Although the opposite convention on the partial order might seem more geometrically
natural, this choice will agree with conventions regarding
fans and hyperplane arrangements.}
\item for every $x\in P$, there is a Lie subgroup $T_x \subset T$ (possibly
not connected) so that the stabilizer of any point of $S_x$ is $T_x$.  
\end{itemize}

Given a $T$-decomposition of $\M$ indexed by $P$, we put a linear structure 
on $P$ by letting $V(x) = (\t_x)^*$, the dual of the Lie algebra of $T_x$.
The definition of the poset structure ensures that 
$T_x \subset T_y$ if $x \le y$, and we let the restriction
$r_{xy}\colon V(y)\to V(x)$ be the map dual to the inclusion
$\t_x \subset \t_y$.
%natural quotient map.

The geometric meaning of the resulting structure sheaf $\cA = \cA_P$
comes from equivariant cohomology.  For any $x\in P$
and any point $p\in S_x$, there is a canonical identification
\[H_T^\udot(Tp) = H^\udot_{T_x}(pt) = \Sym V(x) = \cA(x)\]
of graded rings (all cohomology groups in this paper will be taken 
with coefficients in $\R$).  If $x \le y$, then the map 
$r_{xy}\colon \cA(y) \to \cA(x)$ is the pullback by any $T$-equivariant
projection $Tp_x \to Tp_y$.  

\subsection{The localization functor}
\label{localization functor}

For a large class of $T$-spaces $\M$ endowed with a $T$-decomposition $\sS$,
we can use modules over the structure sheaf $\cA$ to study $T$-equivariant
sheaves on $\M$.  Let $D^b_T(\M)$ denote the triangulated 
category of $T$-equivariant sheaves 
defined by Bernstein and Lunts \cite{BerLu}, and let
$D^b_{T,\sS}(\M)$ be the full subcategory 
of ``$\sS$-constructible'' objects: objects whose cohomology 
sheaves have finite-dimensional stalks, vanish outside of
a finite range of degrees, and are locally constant on every 
$S_x$, $x\in P$.
Let $\mu\colon \M \to \M/T$ denote the quotient map.  
We will construct, under suitable topological hypotheses on $\sS$, 
a functor \[\Loc\colon D^b_{T,\sS}(\M) \to \cA\md\]
so that 
\begin{itemize}
\item for any $x \in P$ and any $p\in S_x$, there is 
a natural isomorphism 
\begin{equation} \label{localized stalks} H^\udot_T(Tp; B)\simeq (\Loc B)(x) 
\end{equation}
of 
$\cA(x)= H^\udot_T(Tp)$-modules\footnote{
Here and later in the paper we use the shorthand notation
$H^\udot_T(X; B)$ to denote the equivariant cohomology
of the pullback $j^*B$, where $j\colon X\to \M$ is the 
inclusion of a $T$-invariant subset $X$ of $\M$.}, and
\item there is a natural homomorphism
\begin{equation}\label{localization homomorphism}
\GB:H^\udot_T(\M; B) \to (\Loc B)(P)
\end{equation}
which is compatible under restriction with the isomorphism
\eqref{localized stalks}.
\end{itemize}

The first condition that we need the pair $(\M,\sS)$
to satisfy is the following, which is an equivariant 
generalization of the notion of topological 
stratification as used by Goresky 
and MacPherson \cite[\S1.1]{GM}.  

\begin{definition} \label{T-stratification}
Let $T$ be a compact Abelian Lie group, let $\M$ be a $T$-space, and let
$\sS = \{S_x\}_{x\in P}$ be a $T$-decomposition of $\M$ indexed by 
a poset $P$.  Then we define the statement that 
$\sS$ is a \textbf{$T$-stratification} inductively on
$|P|$ to mean that for every $x \in P$, and every point $p\in S_x$,
there exists 
\begin{itemize}
 \item an open neighborhood $U \subset \M$ of the orbit
$T \cdot p$,
\item a $T_x$-space $L$, with a $T_x$-stratification
$\{S^x_y \mid y < x\}$, and
%indexed by $\bdy x = \{y \mid y < x\}$
\item a $T$-equivariant
homeomorphism 
\[\phi\colon U \cong T \times^{}_{T_x} \cone(L) \times D,\]
where $D$ is the unit disk in a Euclidean space, and $\cone(L)$ 
is the open topological cone $(L \times [0,1))/(L \times \{0\})$,
with the induced $T_x$-action,
\end{itemize}
such that $\phi$ is compatible with the 
induced decompositions on both sides: 
$$\phi(U\cap S_y) = T \times^{}_{T_x} ( S^x_y\times (0,1)) \times D$$
for all $y > x$, and $\phi(U \cap S_x) = T \times^{}_{T_x} \{v\} \times D$, 
where $v \in L$ is the apex of the cone.  When $x$ is a minimal element
of $P$, we take $L = \emptyset$, $\cone(L) = \{v\}$ (this takes care of the
base case $|P| = 1$).  
The space $\cone(L)$ is called the \textbf{normal slice} to 
$S_x$ at the point $p$, $L$ itself is called the \textbf{link} of
$S_x$, and $U$ is called a \textbf{stratified tubular neighborhood}
of $S_x$.
\end{definition}

The following basic properties of this definition are easy to check.
\begin{lemma}\label{stratification remarks}
If $\sS$ is a $T$-stratification of $\M$, then  
\begin{itemize}
 \item for any inclusion $j\colon \N \to \M$ of a locally closed union of some of 
the strata $S_x$, the functors $j^*$, $j_*$, $j^!$, $j_!$ preserve $\sS$-constructibility,
\item for every $x\in P$, $S_x$ and $S_x/T$ are both
topological manifolds, and $S_x \to S_x/T$ is a principal $T/T_x$-fiber bundle, and
\item for any Lie subgroup $T'\subset T$, the decomposition 
$\{S_x/T'\}$ defines a $T/T'$-stratification
of $\M/T'$.
\end{itemize}
\end{lemma}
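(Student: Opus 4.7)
My plan is to handle the three bullets in order of increasing subtlety, with all three proceeding by induction on $|P|$ to match the inductive definition of a $T$-stratification.

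The second bullet is immediate from the local model: restricting $\phi\colon U\cong T\times^{T_x}\cone(L)\times D$ to $S_x$ gives $U\cap S_x \cong T\times^{T_x}\{v\}\times D$, which simplifies to $(T/T_x)\times D$ since $T$ is abelian. Such charts cover $S_x$ and exhibit both $S_x$ and $S_x/T\cong D$ as manifolds locally, with the projection realized locally as the trivial $T/T_x$-bundle $(T/T_x)\times D \to D$, hence a principal $T/T_x$-bundle globally.

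For the first bullet, I would factor any locally closed inclusion as a composition of an open and a closed inclusion and treat the two cases separately. Closed inclusions of unions of strata pose no difficulty: $j_* = j_!$ and $j^*$, $j^!$ all preserve $\sS$-constructibility by elementary restriction arguments that leave local constancy on each $S_x$ undisturbed. For an open inclusion $j\colon \N \hookrightarrow \M$ complementary to a closed union of strata, the content is in $j_!\cF$ and $j_*\cF$, whose stalks at a point $p\in S_x$ with $S_x \not\subset \N$ can be computed using the conical tubular neighborhood as (respectively compactly supported and ordinary) cohomology of $\cF$ restricted to a slice of $\cone(L)\times D$ minus the cone point. Induction applied to the smaller $T_x$-stratification of $L$ shows these stalks are locally constant as $p$ varies in $S_x$.

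The third bullet is where I expect the real work, and where the abelianness of $T$ is used essentially. I would verify each piece of Definition \ref{T-stratification} for the putative stratification $\{S_x/T'\}$ of $\M/T'$: the order-theoretic conditions are inherited from $\sS$, and the stabilizer of $[p]\in S_x/T'$ under the residual $T/T'$-action is $T_xT'/T'\cong T_x/(T_x\cap T')$. The heart of the matter is the local-model computation. Setting $H = T_x\cap T'$, I would check by direct manipulation of orbit equivalences that
\[
(T\times^{T_x}Z)/T' \;\cong\; (T/T')\times^{T_x/H}(Z/H)
\]
for any $T_x$-space $Z$, using abelianness of $T$ to guarantee that $H$ is normal in $T_x$ and that the $T_x$-action on $T$ descends to an action of $T_x/H$ on $T/T'$. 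Applying this identity with $Z = \cone(L)\times D$ and using $\cone(L)/H = \cone(L/H)$ produces the required local form for $U/T'$ with new link $L/H$ and new ambient group $T_x/H$. That the induced decomposition $\{S^x_y/H\mid y<x\}$ of $L/H$ genuinely is a $T_x/H$-stratification is the third bullet applied to $L$, which has strictly fewer strata, so the induction closes. The main obstacle is orchestrating the bookkeeping among $T$, $T_x$, and $T'$ and verifying the quotient identity above; once that is in hand, the rest is essentially organizational.
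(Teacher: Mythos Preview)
The paper does not actually prove this lemma: it introduces it with the sentence ``The following basic properties of this definition are easy to check'' and gives no argument. Your proposal therefore supplies strictly more than the paper does, and your outline is correct. The local-model computation for the second bullet, the open/closed factorization plus conical-slice argument for the first, and the quotient identity $(T\times^{T_x}Z)/T' \cong (T/T')\times^{T_x/H}(Z/H)$ with $H=T_x\cap T'$ for the third are all sound; your verification that the inductive hypothesis applies to the link $L$ with its $T_x$-stratification is exactly what closes the induction.
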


Given a $T$-space $\M$ with a $T$-decomposition $\sS$, in order
to define our localization functor we will need to assume that
\begin{enumerate}
\item[(A)] $\sS$ is a $T$-stratification, 
\item[(B)] for each $x\in P$, the quotient $S_x/T$ is simply connected, and
\item[(C)] for every $x, y\in P$, $y < x$, the space $S^x_y$ 
given by Definition \ref{T-stratification} is connected. 
\end{enumerate}

Condition (B) ensures that, for $B\in D^b_{T,\sS}(\M)$, 
the spaces $H^\udot_T(Tp; B)$
are all canonically isomorphic as the point $p$ varies in
$S_x$.  Then, intuitively, we would like to define the
map $(\Loc B)(y) \to (\Loc B)(x)$ for $x < y$ by letting
a point $p\in S_x$ move to the boundary and degenerate to
a point in $S_y \subset \ol{S_x}$.  Condition (C) ensures
that the resulting map does not depend on the path $p$ takes.

More formally, we 
define the functor $\Loc$ by pushing forward
sheaves from $\M$ to $P$ in two steps, first by 
the quotient map $\mu\colon\M \to \M/T$, 
and then by the natural map $\pi\colon \M/T \to P$
which sends any point in $S_x/T$ to $x$.
The first step is a pushforward by the functor 
$Q_{\mu *}\colon D^b_T(\M) \to D^+(\M/T)$ which was 
defined by Bernstein and Lunts \cite[6.9]{BerLu}. 
We can describe this functor more concretely as follows. 
Let $ET$ be an acyclic free $T$-space.  The category 
$D^b_T(\M)$ is equivalent to the full 
subcategory of $D^b(\M \times_T ET)$ consisting of objects
whose pullback to $\M \times ET$ is isomorphic to the
pullback of an object of $D^b(\M)$ along the projection
$\M \times ET \to \M$.  Then $Q_{\mu *}$ is the 
(derived) pushforward $R\bar\mu_*$ along the quotient map 
$\bar\mu\colon\M \times_T ET \to \M/T$.  Note the equivariant
cohomology $H^\udot_T(\M;B)$ of an object $B$ of $D^b_T(\M)$ is 
just the ordinary hypercohomology of the corresponding object of $D^b(\M \times_T ET)$.

We then define a graded sheaf $B^\mu$ to be the direct sum of the 
cohomology sheaves of $Q_{\mu *}B$.  The following result 
shows that this sheaf has the stalks that we want.

\begin{lemma} \label{stalk lemma}
For any point $p\in \M$ and $B \in D^b_{T,\sS}(\M)$, there is a natural 
isomorphism\footnote{Note that on the left-hand side $Tp$ is regarded as a subset of $\M$,
while on the right-hand side it is regarded as a point of the quotient $\M/T$.}
\begin{equation}\label{bmustalk}
H_T^\udot(Tp; B)\cong (B^\mu)_{Tp}.\end{equation}
\end{lemma}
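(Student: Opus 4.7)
The plan is to unwind the definitions of $B^\mu$ and $Q_{\mu *}$, express $(B^\mu)_{Tp}$ as a direct limit over a cofinal family of $T$-invariant neighborhoods of $Tp$, and then use the local conical structure from Definition~\ref{T-stratification} to identify that limit with $H^\udot_T(Tp; B)$.

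First, using the Bernstein-Lunts model, represent $B$ by $\tilde B \in D^b(\M \times_T ET)$, so $Q_{\mu *}B = R\bar\mu_*\tilde B$. Since $B^\mu$ is the direct sum of the cohomology sheaves $\cH^i(Q_{\mu *}B) = R^i\bar\mu_*\tilde B$, the standard description of a pushforward stalk gives
\[(B^\mu)_{Tp} \;=\; \varinjlim_{V \ni Tp} H^\udot(\bar\mu^{-1}(V); \tilde B) \;=\; \varinjlim_V H^\udot_T(\mu^{-1}(V); B),\]
where $V$ ranges over open neighborhoods of $Tp$ in $\M/T$, using $\bar\mu^{-1}(V) = \mu^{-1}(V) \times_T ET$ and the Borel construction definition of $H^\udot_T$.

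Next, I would restrict to a cofinal subsystem coming from the $T$-stratification. With $p \in S_x$, Definition~\ref{T-stratification} provides a stratified tubular neighborhood $U \cong T \times_{T_x} \cone(L) \times D$. Shrinking the cone parameter and the disk radius produces a cofinal family $\{U_\alpha\}$ of $T$-invariant open neighborhoods of $Tp$; since each $U_\alpha$ is $T$-invariant we have $U_\alpha = \mu^{-1}(\mu(U_\alpha))$, and the images $\{\mu(U_\alpha)\}$ are cofinal among neighborhoods of $Tp$ in $\M/T$. So $(B^\mu)_{Tp} \cong \varinjlim_\alpha H^\udot_T(U_\alpha; B)$.

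The main step, and the main obstacle, is to prove that restriction along the closed inclusion $Tp = T \times_{T_x} \{v\} \times \{0\} \hookrightarrow U_\alpha$ is an isomorphism on equivariant cohomology. The product decomposition admits a $T$-equivariant retraction of $U_\alpha$ onto $Tp$ by collapsing the cone parameter and contracting the disk, but this retraction does not preserve the stratification, so ordinary homotopy invariance does not apply directly. Instead I would invoke the standard fact for constructible complexes: on an open cone $\cone(L)$ equipped with its canonical stratification, the global cohomology is naturally isomorphic to the stalk at the apex. This statement, combined with homotopy invariance along the contractible disk factor and passage to the Borel construction (where $\tilde B$ remains constructible with respect to the stratification induced from $\sS$), yields the required isomorphism $H^\udot_T(U_\alpha; B) \cong H^\udot_T(Tp; B)$. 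Naturality in $B$ is automatic from the construction, and taking the direct limit over $\alpha$ completes the proof. The subtle point to verify is the compatibility of conic contraction with the Borel construction, which one reduces to the non-equivariant statement by applying it directly on $\M \times_T ET$.
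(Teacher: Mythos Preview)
Your setup coincides with the paper's: both express $(B^\mu)_{Tp}$ as a direct limit over the cofinal family of shrinking tubular neighborhoods $U_\alpha$ and use the $T$-equivariant scaling to see that this system stabilizes, so that $(B^\mu)_{Tp}\cong H^\udot_T(U;B)$ for a single tubular neighborhood $U$. The difference lies in the remaining identification $H^\udot_T(U;B)\cong H^\udot_T(Tp;B)$. The paper observes that both sides are cohomological functors in $B$ and that $D^b_{T,\sS}(\M)$ is generated by the objects $j_{y!}\R_{S_y,T}$, then checks each generator by hand: both sides vanish for $y\neq x$ and equal $\cA(x)$ for $y=x$. You instead invoke the conic-contraction lemma for constructible complexes once and for all. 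Your route is more conceptual and avoids the case split; the paper's route avoids stating a general lemma, though its verification that $H^\udot_T(U;j_{y!}\R_{S_y,T})=0$ for $y\neq x$ tacitly relies on the same conic principle. One point in your argument is worth tightening: $U_\alpha\times_T ET$ is a bundle over $BT_x$ with conical fiber rather than a literal open cone, so ``applying the cone fact directly on $\M\times_T ET$'' should be read as the monodromic form of the lemma---the $\R_{>0}$-scaling on $\cone(L)$ commutes with $T$, hence induces an $\R_{>0}$-action on $U_\alpha\times_T ET$ with fixed locus $Tp\times_T ET$, and $\tilde B$ is monodromic for this action since it is locally pulled back from an object constructible for the cone stratification on the fiber.
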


\begin{proof} Base change gives a natural map
$H_T^\udot(Tp; B)\to(B^\mu)_{Tp} $.  Suppose that $p\in S_x$.
We have $(B^\mu)_{Tp} = \varinjlim H^\udot(U\times_T ET; B)$, where the limit is 
over $T$-invariant open sets $U$ containing $Tp$.  
If we let $U$ be a stratified tubular neighborhood of $S_x$, as provided by 
Definition \ref{T-stratification}, then we get a saturated sequence $\{U_n\}$ 
of open neighborhoods of $Tp$ by taking 
$\phi(U_n)$ to be the image of $T \times (L \times [0,1/n)) \times (1/n)D$ 
in $T \times_{T_x} \cone(L) \times D$.  The group $H^\udot(U_n \times_T ET; B)$
is independent of $n$, and so we see that 
\[(B^\mu)_{Tp} = H^\udot(U \times_T ET; B) = H^\udot_T(U; B).\]

Note that $D^b_{T,\sS}(\M)$
is generated as a triangulated category by the objects
$j_{y!}\R_{S_y,T}$, $y \in P$, where $j_y\colon S_y \hookrightarrow \M$
is the inclusion.
Since both sides of Equation \eqref{bmustalk}
are cohomological functors in $B$, it is enough to 
deal with the case $B = j_{y!}\R_{S_y,T}$.
The map $U \times_T ET \to BT$ is a fiber bundle 
with fiber $U$; using this and the structure of 
the induced stratification on $U$ it follows that 
when $x \ne y$, the derived pushforward of 
$B|_{U \times_T ET}$ to $BT$ is zero, so 
$H^\udot(U \times_T ET; B) = 0$.  On the other 
hand, if $x = y$, then 
\[H^\udot_T(U; B) = H^\udot_T(U \cap S_x) = H^\udot_T(T \times_{T_x} \{v\} \times D) = 
H^\udot_{T_x}(\{v\} \times D) = \cA(x).
\]
 The result follows.
\end{proof}

For the second step, we define the localization of 
$B$ to be \[\Loc B = \pi_*B^\mu.\] 
Note that here $\pi_*$ is the ordinary
pushforward of sheaves, {\em not\/} the derived pushforward.  
Because of this, and because $B^\mu$ involves taking
cohomology sheaves, this functor loses a lot of information.  
However, we shall see that it is well-behaved when applied to 
intersection cohomology sheaves, and it can be easily computed.
Lemma \ref{stalk lemma} and the following lemma imply the 
existence of the isomorphism \eqref{localized stalks}.

\begin{lemma}\label{Localized stalks}
Let $E$ be any sheaf on $\M/T$ which is locally constant
(hence constant) on $S_x/T$ for every $x\in P$.  Then the restriction 
\[(\pi_*E)(x) \to \Gamma({S}_x/T, E)\]
is an isomorphism for all $x\in P$.
\end{lemma}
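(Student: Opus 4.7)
The plan is to establish injectivity and surjectivity of the restriction map using the local topology of the stratification. Unwinding definitions, $(\pi_*E)(x)=E(\pi^{-1}(U_x))$ with $\pi^{-1}(U_x)=\bigcup_{y\le x}S_y/T$, and the three key geometric observations are: (i) $S_x/T$ is closed in $\pi^{-1}(U_x)$ (since $\overline{S_x}\cap\pi^{-1}(U_x)=S_x$), (ii) $S_x/T\subset\overline{S_y/T}$ for every $y<x$ (the quotient by compact $T$ preserves closures), and (iii) for every $p\in S_x/T$ the tubular neighborhood $N_p/T\cong\cone(L)/T_x\times D$ given by condition (A) lies entirely inside $\pi^{-1}(U_x)$. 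I write $V_x$ for the constant value of $E$ on $S_x/T$, so that $\Gamma(S_x/T,E)=V_x$.

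The crucial local input is the identification $E(N_p/T)\cong E_p\cong V_x$. To prove it, I would observe that $E|_{N_p/T}$ is pulled back from $\cone(L)/T_x$ via the projection $N_p/T\to\cone(L)/T_x$ (each stratum of $N_p/T$ is a product with $D$, and $E$ is constant on strata), reducing the statement to $\Gamma(\cone(L)/T_x,F)\cong F_v$ for a sheaf $F$ constant on each stratum of the conical stratification. This last identity comes from the standard fact that conical neighborhoods are distinguished for sheaves constructible with respect to the conical stratification: shrinking the cone towards the apex is a homeomorphism on sections.

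For injectivity, suppose $s\in E(\pi^{-1}(U_x))$ satisfies $s|_{S_x/T}=0$. Then $s_p=0$ for every $p\in S_x/T$, and the local identification forces $s|_{N_p/T}=0$. For each $y<x$, the intersection $N_p/T\cap S_y/T$ is non-empty because $S_x/T\subset\overline{S_y/T}$, and $s$ vanishes there; simple-connectedness of $S_y/T$ (condition (B)) upgrades $E|_{S_y/T}$ from locally constant to globally constant, so $s|_{S_y/T}=0$. Hence $s$ has vanishing stalk at every point and is zero.

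For surjectivity, given $v_x\in V_x$ I build the section from compatible local pieces. At each $q\in S_y/T$ with $y\le x$, the identification $E(N_q/T)\cong V_y$ determines a unique section $s_q$ whose stalk at $q$ is the cospecialization $\mathrm{cosp}_{x\to y}(v_x)\in V_y$; here condition (C) makes $\mathrm{cosp}_{x\to y}\colon V_x\to V_y$ well-defined (independent of path in the link). On any overlap $N_q/T\cap N_{q'}/T$, functoriality of cospecialization gives that $s_q$ and $s_{q'}$ have the same stalk $\mathrm{cosp}_{x\to z}(v_x)$ at every point of $S_z/T$ in the overlap, so they agree as sections; the family glues to a global section $s\in E(\pi^{-1}(U_x))$ restricting to $v_x$. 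The main obstacle I anticipate is the careful setup of the local identification and the cospecialization maps, together with verifying that conditions (A)--(C) are precisely what is needed for both to be well-defined; once these foundations are in place, the injectivity and surjectivity arguments become formal manipulations at the level of stalks.
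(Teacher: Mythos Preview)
Your proof is correct, but it takes a different route from the paper's. You argue directly at the level of stalks: injectivity by propagating vanishing from $S_x/T$ outward through tubular neighborhoods, and surjectivity by building a section from cospecialization maps $V_x\to V_y$ and gluing. This is essentially the observation that a sheaf which is locally constant on each stratum of a conically stratified space is the same as a representation of the exit-path category, and conditions (A)--(C) are precisely what make that category equivalent to the poset $P$. The transitivity of cospecialization that you flag as the main obstacle is genuine but routine once you note that a small enough $N_q$ for $q\in N_p\cap S_y/T$ sits inside $N_p$, so the section on $N_p$ restricts to the distinguished section on $N_q$.

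The paper instead proceeds by d\'evissage. It first checks the statement for sheaves of the form $j_{y*}L$ with $L$ a local system on $S_y/T$ (here (B) and (C) enter to see that $(j_{y*}L)|_{S_x/T}$ is the constant sheaf with stalk $L$ when $y\le x$, and zero otherwise), and then deduces the general case from the exact sequence
\[0 \to E \to \bigoplus_{y \in P} j_{y*}j_y^*E \to \bigoplus_{z \le w} j_{w*}j_w^*j_{z*}j_z^*E,\]
using that $\Gamma(\pi^{-1}(U_x),-)$ and $\Gamma(S_x/T,-)$ are both left exact and agree on the middle and right terms.

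Your approach is more constructive and makes the role of each hypothesis explicit; the paper's is shorter once the resolution is in hand and avoids any gluing argument, at the cost of hiding the geometric content inside the verification for $j_{y*}L$.
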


\begin{proof}  
The stalk $(\pi_*E)(x)$ is the same as the space of sections
$\Gamma(U_x; \pi_*E) = \Gamma(\pi^{-1}(U_x); E)$, where $U_x$ is
the smallest open set containing $x$.
In other words, we need to show
that \[\Gamma(\pi^{-1}(U_x), E) \to \Gamma({S}_x/T, E)\]
is an isomorphism.  
Our assumptions (B) and (C) on our stratification imply
that this holds when $E = j_{y*}L$, where $L$ is 
a local system on ${S}_y/T$, $y$ is any element of $P$, 
and $j_y\colon {S}_y/T \to \M/T$ is the inclusion.  
For general $E$, 
apply the exact sequence of sheaves
\[0 \to E \to \bigoplus_{y \in P} j_{y*}j_y^*E \to 
\bigoplus_{z \le w \in P} j_{w*}j_w^*j_{z*}j_z^*E.\]
We have shown that our map is an isomorphism for the 
second and third terms, so it is for $E$, as well.
\end{proof}

At this point $\Loc B$ is just a sheaf of graded vector
spaces.  To make it into 
an $\cA$-module, first note that there
is an identification of the ring $A = H^\udot_T(pt)$
with the graded endomorphisms of the constant 
equivariant sheaf $\R_{pt,T}$ in $D^b_T(pt)$.
Pulling back $\R_{pt,T}$ to 
$\M$ and tensoring gives an action of $A$ on 
any object $B \in D^b_{T,\sS}(\M)$, and hence on the
localized sheaf $\Loc B$.  
The resulting action of of $A$ on the stalk
of $(\Loc B)(x)$ agrees with the action on 
the equivariant hypercohomology $H^\udot_T(Tp)$, 
$p\in S_x$, under the identification 
$(\Loc B)(x) \simeq H^\udot_T(Tp; B)$.  
In particular, it factors 
through the quotient $\cA(x)$, making $\Loc B$ into 
an $\cA$-module, as required. 

%We can also describe the restriction maps
%for the sheaf $\Loc B$ more directly.  Take any 
%$y \le x$.  Since there is a stratum-preserving 
%deformation retract of the tubular neighborhood $N_x$
%onto $S_x$,  the restriction
%\[\Gamma(N_x, B^\mu) \to \Gamma(S_x, B^\mu)\]
%is an isomorphism.  The restriction
%\[(\Loc B)_y = \Gamma(S_y, B^\mu) \to \Gamma(N_x \cap S_y, B^\mu)\]
%is also an isomorphism, because $N_x\cap S_y$ is 
%connected.  Then $r_{yx}$ is the restriction
%\[\Gamma(N_x,B^\mu) \to \Gamma(N_x\cap S_y, B^\mu)\]
%composed with the inverses of the previous two maps.

Finally, to define the map $\GB: H^\udot_T(\M; B)\to(\Loc B)(P)$,
note that the hypercohomology of $Q_{\mu*}B$ is just the 
global equivariant cohomology $H^\udot_T(\M; B)$.  This 
then has a natural graded homomorphism to the global sections of
the sheaf $B^\mu$, which is isomorphic to the global sections of 
$\Loc B$.  It is easy to check that this is a map of $A$-modules.
%Composing this map with the restriction to a point $x\in P$ gives
%the natural restriction $H^\udot_T(\M; B) \to H^\udot_T(\mu^{-1}(p); B)$
%where $p \in S_x/T$.

\begin{remark}
In certain exceptional cases $\GB$ will be an isomorphism;
one of our main results is that this happens when $\M$ is a hypertoric variety
and $B$ is the equivariant intersection cohomology sheaf on $\M$.
More examples in which $\GB$ is an isomorphism are given in the following section.
\end{remark}

\subsection{Relations with other theories}\label{linear poset examples}
The localization functor we have just defined generalizes constructions 
that have been used to study equivariant cohomology and equivariant 
sheaves in a number of different settings.  In order to put our results
in context, we point out some of these connections in this section. 

One case that has been extensively studied is when
$\M$ is a toric variety defined by a rational fan $\Sigma$, 
$\sS$ is the stratification
by orbits of the complex torus $T_\C$, 
and $T$ is the maximal compact subgroup of $T_\C$.  The poset $P$ is the 
fan $\Sigma$ itself, ordered by inclusion of cones.
The linear structure is given by $V(\sigma) = \Span(\sigma)^*$ 
for any cone $\sigma\in \Sigma$.  The resulting structure sheaf
$\cA$ is also known as the sheaf of ``conewise polynomial functions''
on $\Sigma$: its sections on an open set (subfan) 
$\Sigma'\subset \Sigma$ is the ring of real-valued functions on 
the support of $\Sigma'$ which restrict to polynomial functions 
on each cone.

For any space satisfying the assumptions of \S\ref{localization functor}, 
the structure sheaf $\cA$ will be isomorphic to the
localization of the equivariant constant sheaf $\R_T$ on $\M$.
When $\M$ is a toric variety,  the ring $\cA(\Sigma)$
of global sections is naturally isomorphic to the equivariant Chow cohomology ring
of $\M$ \cite[Thm. 1]{Pa}.  
If  the fan $\Sigma$ is simplicial, then $\M$ is rationally smooth
(that is, it has at worst orbifold singularities), 
and the map 
$\Gamma_{\!\R_T}$ of \eqref{localization homomorphism} 
will be an isomorphism.
For arbitrary rational fans, if we take $B$ to be the equivariant intersection cohomology sheaf $\IC_T(\M)$,
the map $\GB:\IH^\udot_T(\M)\to \Loc(B)(\Sigma)$ is an isomorphism \cite[Theorem 2.2]{BBFK1}.

\begin{remark}
Because toric varieties
have such simple geometry, the authors of \cites{BBFK1} were able to use a simpler 
(but equivalent) construction in place of our localization functor, defining 
a presheaf on a fan by taking equivariant intersection cohomology 
on open unions of $T_\C$-orbits.   The statement that $\GB$ is an isomorphism
is equivalent to saying that this presheaf is a sheaf.
These papers also gave an algorithm for computing the sheaf 
$\Loc(\IC_T(\M))$ by showing that it is a minimal extension sheaf, a 
concept which we discuss in the next section.
\end{remark}

Our localization functor can also be used to express aspects
of the theory of moment graphs.  Suppose $X$ is
a proper normal algebraic variety $X$ over $\C$ endowed
with an algebraic action of a torus $T_\C$, and let $\M \subset X$ be the
subvariety which is the union of all the zero and one-dimensional orbits
of $X$.  If $\dim_\C \M = 1$, it will consist of a collection of projective lines
joined at $T_\C$-fixed points.  The resulting linear poset can be viewed
as a graph with directions assigned to an edge, which has sometimes
been called the {\bf moment graph} of $X$. Goresky, Kottwitz and MacPherson showed that if $X$ is
{\bf equivariantly formal} \cite[\S 1.2]{GKM} (this will hold for instance
if $X$ is smooth and projective, or if it admits a paving by affines), then
a theorem of Chang and Skjelbred \cite[Lemma 2.3]{CS} implies that $H^\udot_T(X)$
is isomorphic as a ring to $\cA(P)$, via the
composition of the restriction $H^\udot_T(X)\to H^\udot_T(\M)$ with
$\Gamma_{\R_{\M,T}}\colon H^\udot_T(\M) \to \cA(P)$.
Guillemin and Zara \cites{GZ1,GZ2,GZ3} have studied many
aspects of the geometry of moment graphs for smooth varieties $X$
(using a stronger definition reflecting more of the geometry of $X$)
and their cohomology.

If $X$ is singular, then it may fail to be equivariantly formal, but at least when
it is projective $X$ will be equivariantly formal for intersection cohomology,
which implies that $\IH^\udot_T(X)$ is isomorphic as an $H_T(pt)$-module
to the global sections of the sheaf $\Loc(\IC_{X,T}|_{\M})$.  When $X$ is
a Schubert variety in a flag variety, the first author and MacPherson
\cite[Theorem 1.5]{BM} showed that this sheaf can be computed using a universal property
similar to the one satisfied by minimal extension sheaves.

\subsection{Pure $\cA$-modules and minimal extension sheaves} \label{pure sheaves}
The concept of pure sheaves and minimal extension sheaves 
will be central throughout this paper.  Such objects have previously been defined and studied only in the special
case of a fan, described in Section \ref{linear poset examples}.
Nonetheless, the definitions and the basic results concerning them generalize without difficulty,
and we refer the reader to \cites{BBFK2,BreLu} 
for proofs of Proposition \ref{decomposition theorem} and Lemma \ref{rigidity criterion}.

\begin{definition}\label{pure}
For an arbitrary linear poset $(P,V)$, we call an $\cA$-module $\cM$
{\bf pure} if 
\begin{itemize}
\item it is pointwise free: for every $x \in P$, $\cM(x)$
is a free $\cA(x)$-module, and 
\item it is flabby: for any $U\subset P$
open, the restriction $\cM(P) \to \cM(U)$ is surjective.  
\end{itemize}\end{definition}

\begin{definition}
If $M$ is a graded module over a polynomial ring $\Sym V$,  
we define $$\ol{M} = M/VM,$$
and for a homomorphism $\phi\colon M_1 \to M_2$ of graded modules, we let
$\ol\phi\colon \ol{M_1} \to \ol{M_2}$ denote the induced homomorphism 
of graded vector spaces.  
\end{definition}

The following proposition gives a classification of pure
sheaves on a linear poset.  See \cite{BBFK2}*{Proposition 1.3 and Theorem 2.3}, 
\cite[Theorem 5.3]{BreLu} for proofs.
% It is a combinatorial analogue of the
% decomposition theorem of Beilinson, Bernstein and Deligne \cite{BBD}.

\begin{proposition} \label{decomposition theorem} For each $x\in P$, there 
is an indecomposable pure
$\cA_P$-module $\cL_x$, unique up to isomorphism,
with the property that $\cL_x(y) = 0$ if $x \not\le y$
and $\cL_x(x) \cong \cA(x)$.  For any pure sheaf $\cM$, there is 
an isomorphism
\begin{equation}\label{decomposition}
\cM\cong \bigoplus_{x\in P} \ker \ol{\bdy_x}\otimes_k \cL_x.
\end{equation}
\end{proposition}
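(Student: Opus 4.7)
The plan is to establish existence of $\cL_x$, its uniqueness, and the decomposition formula in parallel, by induction on $|P|$, with graded Nakayama's lemma for finitely generated graded modules over the positively graded polynomial ring $\cA(y) = \Sym V(y)$ as the single key algebraic input. This mirrors the template for fans in \cites{BBFK2, BreLu}, and the arguments transfer to our more general linear-poset setting without essential change.

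To construct $\cL_x$, order $P$ compatibly with $\le$ and fill in stalks one at a time. Set $\cL_x(y) = 0$ for $x \not\le y$ and $\cL_x(x) = \cA(x)$. Given $y > x$ with $\cL_x$ already defined on $\bdy y$, regard $\cL_x(\bdy y)$ as a graded $\cA(y)$-module via the restrictions in $\cA$, pick a homogeneous $k$-basis of $\ol{\cL_x(\bdy y)}$, lift it to homogeneous elements of $\cL_x(\bdy y)$, and let $\cL_x(y)$ be the free graded $\cA(y)$-module on generators of matching degrees, with $\bdy_y$ sending these generators to the chosen lifts. Graded Nakayama makes $\bdy_y$ surjective (giving flabbiness), and pointwise freeness is automatic. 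Uniqueness proceeds by the same induction: given two sheaves with the stated properties, the number of generators at each stalk is forced to agree by flabbiness and the minimality of the construction, and a choice of compatible lift produces an isomorphism stalk-by-stalk. Indecomposability follows because any endomorphism $\phi$ of $\cL_x$ acts on $\cL_x(x) = \cA(x)$ as multiplication by an element of $\cA(x)$, which for degree reasons must be a scalar in $k$; this scalar controls whether $\phi$ is a unit at every stalk above $x$ (inductively, by Nakayama) or is locally nilpotent, so $\End(\cL_x)$ is local.

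For the decomposition of a general pure sheaf $\cM$, build a morphism
\[\Phi\colon \bigoplus_{x\in P} \ker \ol{\bdy_x} \otimes_k \cL_x \lra \cM\]
inductively up the poset. At each $x$, lift the inclusion $\ker \ol{\bdy_x} \hookrightarrow \ol{\cM(x)}$ to a $k$-linear map into $\cM(x,\bdy x) = \ker \bdy_x$, which is possible because $\cM(x)$ is free over $\cA(x)$; this defines $\Phi$ on the $x$-stalk of the summand $\ker\ol{\bdy_x}\otimes_k \cL_x$. To extend $\Phi$ on this summand from $x$ to higher strata $y > x$, use flabbiness of $\cM$ together with the inductive construction of $\cL_x$ to choose compatible lifts; at each step this amounts to lifting finitely many elements of $\cM(\bdy y)$ to $\cM(y)$. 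To verify $\Phi$ is an isomorphism, check stalk by stalk at $y$: the reduction $\ol{\Phi(y)}$ is an isomorphism because the $x = y$ summand accounts precisely for $\ker \ol{\bdy_y}\subseteq \ol{\cM(y)}$, while the $x < y$ summands inductively account for the image of $\ol{\bdy_y}$ inside $\ol{\cM(\bdy y)}$. Pointwise freeness of source and target then promotes $\ol{\Phi(y)}$ to the full isomorphism $\Phi(y)$ by graded Nakayama.

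The principal subtlety is bookkeeping: the various local choices of lifts, in both the construction of $\cL_x$ and the map $\Phi$, must assemble into genuine morphisms of $\cA$-modules. Both the existence of the lifts and the coherence of the choices rest on the combination of flabbiness and pointwise freeness built into the definition of purity, and beyond this no new input is needed, since graded Nakayama and the classification of minimal free modules over a local graded ring with residue field $k$ are insensitive to whether $P$ arises from a fan.
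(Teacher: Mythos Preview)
The paper does not supply its own proof of this proposition: it explicitly refers the reader to \cite{BBFK2}*{Proposition 1.3 and Theorem 2.3} and \cite[Theorem 5.3]{BreLu}, noting that the arguments there ``generalize without difficulty'' to arbitrary linear posets. Your proposal is precisely a sketch of that standard argument---inductive construction of $\cL_x$ via minimal free covers of $\cL_x(\bdy y)$, indecomposability via locality of the endomorphism ring, and the decomposition of a pure $\cM$ by lifting $\ker\ol{\bdy_x}$ into $\cM(x,\bdy x)$ and extending upward using flabbiness---and you correctly identify graded Nakayama as the only algebraic input. So your approach coincides with the one the paper cites.

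One small quibble: when you say the lift of $\ker\ol{\bdy_x}\hookrightarrow\ol{\cM(x)}$ into $\cM(x,\bdy x)$ is possible ``because $\cM(x)$ is free over $\cA(x)$,'' freeness alone only gives a lift into $\cM(x)$. Landing in $\cM(x,\bdy x)$ requires the observation that the right-exact sequence $\ol{\cM(x,\bdy x)}\to\ol{\cM(x)}\to\ol{\cM(\bdy x)}\to 0$ (from flabbiness) identifies the image of $\ol{\cM(x,\bdy x)}$ in $\ol{\cM(x)}$ with $\ker\ol{\bdy_x}$. This is implicit in your argument but worth stating, since it is exactly where flabbiness enters at this step.
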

The sheaf $\cL_x$ is known as a {\bf minimal extension sheaf}.  

\begin{remark}
The isomorphisms in Proposition \ref{decomposition theorem} are not in general canonical.
\end{remark}

\begin{remark}
Note that a sheaf $\cM$ is flabby if and only if
the restriction $\bdy_y\colon\cM(y) \to \cM(\bdy y)$ is surjective
for all $y\in P$.  The fact that $\cL_x$ is both flabby and indecomposable means that
\[\ol{\bdy_y}\colon \ol{\cL_x(y)} \to \ol{\cL_x(\partial y)}\]
is surjective when $y=x$ and an isomorphism for all other $y\in P$.
\end{remark}

We next give a pair of examples that illustrate the phenomena that we will encounter
in Section \ref{Lattice of flats}.
Let $V$ be a two-dimensional real vector space, and let $\ell_1,\ell_2$, and $\ell_3$
be three distinct lines in $V$.

\begin{example}\label{first}
Consider the poset\\

\begin{figure}[!h]
\centerline{
\xymatrix{
12\ar[d]\ar[dr] & 13\ar[dl]\ar[dr] & 23\ar[dl]\ar[d]\\
1\ar[dr] & 2\ar[d] & 3\ar[dl]\\
& \emptyset
}}
\end{figure}

\noindent
with linear structure given by
$V(\emptyset) = 0$, $V(i) = V/\ell_i$, and $V(ij) = V$, with the obvious restriction maps.
It is easy to check that the structure sheaf $\cA$ is flabby, and is therefore a minimal extension
sheaf $\cL_\emptyset$ for the minimal element $\emptyset$.  See Lemma \ref{Q-smooth MES}
for a generalization of this example.
\end{example}

\begin{example}\label{second}
On the other hand,
consider the poset

\[
\xymatrix{
& 123\ar[dr]\ar[d]\ar[dl]\\
1\ar[dr] & 2\ar[d] & 3\ar[dl]\\
& \emptyset
}
\]

\noindent
obtained by collapsing the three maximal elements of the poset in Example \ref{first}
into a single element $123$,
and putting $V(123) = V$.
Now the structure sheaf $\cA$ is no longer flabby:
the restriction map $$\cA(123)\to\cA(\partial 123)$$ is not surjective.  Thus $\cL_\emptyset(123)$
must have some ``extra stuff'' to correct this problem.
Indeed, we have
$$\cL_\emptyset(\emptyset) = \cA(\emptyset) = \Sym V(\emptyset) = \R,$$
$$\cL_\emptyset(i) = \cA(i) = \Sym V(i)\hspace{10pt}\text{for $i=1,2,3$,}$$
$$\text{and}\hspace{10pt}\cL_\emptyset(123) = \Sym V\oplus\Sym V[-2].$$
To define the restriction maps, we must specify the image of the generator of $\Sym V[-2]$
in $\cL_\emptyset(\partial 123)_2 \cong V/\ell_1\oplus V/\ell_2\oplus V/\ell_3$.
The only requirement is that it should not come from a single element of $V$.
In fact, any two choices satisfying this condition 
will define the same sheaf up to a unique isomorphism; the fact that there is no
natural choice reflects the fact that there is no natural basis of the free $\Sym V$-module
$\cL_\emptyset(123)$.
\end{example}

\begin{remark} \label{decomp theorem remark}
When the linear poset comes from a rational fan as described in 
\S\ref{linear poset examples},
Proposition \ref{decomposition theorem} is a 
combinatorial version of the decomposition theorem 
of Beilinson, Bernstein, and Deligne \cite{BBD} for toric resolutions of 
singularities (or more precisely its equivariant version proved by 
Bernstein and Lunts \cite[\S5.3]{BerLu}).  
A resolution $\varpi\colon \wt X \to X$ of
toric varieties arises from a subdivision of a rational fan $\Sig$ into a 
smooth fan $\wt\Sig$.  The decomposition theorem says that the pushforward
$\varpi_*\R_{\wt X, T}$ of the constant equivariant sheaf splits as a 
direct sum of shifted intersection cohomology sheaves of subvarieties
of $X$.  Because $\varpi$ is $T_\C$-equivariant, these subvarieties
must be closures of $T_\C$-orbits.  

If $O_\sig$ is the orbit corresponding to $\sig\in\Sig$, 
then the localization of $\IC_T(\ol{O_\sig})$
is the minimal extension sheaf $\cL_\sig$. 
On the other hand, pushing forward the structure sheaf
$\cA_{\wt\Sig} = \Loc \R_{\wt X, T}$ to $\Sig$ gives an $\cA_\Sig$-module $\cE$,
which is easily seen to be pure.
It is not hard to show that localization commutes with  
pushforwards, so $\cE = \Loc \varpi_* \R_{\wt X, T}$.  The
decomposition theorem thus implies that $\cE$ splits into a direct 
sum of shifts of the minimal extension sheaves $\cL_\sig$, $\sig\in \Sigma$,
as guaranteed by Proposition \ref{decomposition theorem}.
\end{remark}

We say that an $\cA_P$-module $\cE$ is {\bf rigid}
if the group of (grading-preserving) automorphisms of $\cE$
consists only of multiplications by nonzero scalars.  This notion is important
because it tells us that if $\cE$ is rigid and another $\cA_P$-module $\cF$ is isomorphic to
$\cE$, then that isomorphism is (nearly) canonical.  In our applications
the groups $\cE(P)$ and $\cF(P)$ will have degree zero parts that are naturally identified
with the base field $k$, which will allow us to make our isomorphisms completely
canonical.
We will use the following criterion to establish rigidity in the 
proof of our first main result, 
Theorem \ref{main theorem 1}.  See \cite[Remark 1.8]{BBFK2}
for a proof.

\begin{lemma} \label{rigidity criterion}
A minimal extension sheaf $\cL_x$ is rigid  if and only if
for each $y\in P$ there exists a number $d$ so that
$\cL_x(y)$ is generated in degrees $\le d$ and
$\cL_x(y, \bdy y)$ is generated in degrees $> d$.
\end{lemma}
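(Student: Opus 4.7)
The plan is to prove both directions by induction on the poset, using the short exact sequence
\[0 \to \cL_x(y, \bdy y) \to \cL_x(y) \xrightarrow{\bdy_y} \cL_x(\bdy y) \to 0\]
provided by flabbiness, together with the fact that $\cL_x(y)$ is a free graded $\cA(y)$-module.

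For the sufficiency direction ($\Leftarrow$), let $\phi$ be a grading-preserving automorphism of $\cL_x$. Since $\cL_x(x) \cong \cA(x)$ as graded $\cA(x)$-modules, $\phi_x$ is multiplication by some scalar $\lambda \in k^*$; after dividing $\phi$ by $\lambda$, I may assume $\phi_x = \id$. I then argue by induction on chains from $x$ that $\phi_y = \id$ for every $y \geq x$. Inductively, assuming $\phi_z = \id$ for all $x \leq z < y$, compatibility of $\phi$ with the restriction maps forces $\bdy_y \circ \phi_y = \bdy_y$, so $\psi := \phi_y - \id$ factors through $\ker(\bdy_y) = \cL_x(y, \bdy y)$. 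This $\psi$ is $\cA(y)$-linear and homogeneous of degree zero. The hypothesis at $y$ supplies a degree $d$ such that $\cL_x(y)$ admits a generating set in degrees $\leq d$, while $\cL_x(y, \bdy y)$ is generated in degrees $> d$. Since $\cA(y) = \Sym V(y)$ is a positively graded polynomial ring, being generated in degrees $> d$ implies vanishing in degrees $\leq d$. Thus $\psi$ sends every generator of $\cL_x(y)$ into a zero subspace, so $\psi = 0$ by $\cA(y)$-linearity; the induction closes, and $\phi$ is the scalar $\lambda$.

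For the necessity direction ($\Rightarrow$), I argue contrapositively. Suppose there is some $y \in P$ at which the degree condition fails; then one can find a minimal generator $m$ of the free module $\cL_x(y)$ of some degree $e_1$ and a minimal generator $n$ of the submodule $\cL_x(y, \bdy y)$ of some degree $e_2 \leq e_1$. Picking a nonzero $p \in \cA(y)_{e_1 - e_2}$, I aim to build a non-scalar grading-preserving automorphism of the form $\id + \phi$, where $\phi_y$ sends $m$ to $p \cdot n$ and vanishes on a complementary free basis of $\cL_x(y)$. Because $p \cdot n$ lies in $\ker \bdy_y$, compatibility of $\phi$ with restrictions down to any $z < y$ is automatic after setting $\phi_z = 0$.

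The main obstacle is globalization upward: one must specify $\phi_z$ for each $z > y$ so that $r_{yz} \circ \phi_z = \phi_y \circ r_{yz}$, which constrains $\phi_y$ to vanish on the images of all these restrictions. I would handle this by reformulating the problem in terms of the algebra $\End(\cL_x)_0$ of degree-zero endomorphisms: by indecomposability and Krull--Schmidt applied to the finite-dimensional graded pieces, $\End(\cL_x)_0$ is a local $k$-algebra with residue field $k$, so rigidity is equivalent to $\End(\cL_x)_0 = k$. The failure of the degree condition at $y$ can then be converted into a nonzero nilpotent element of $\End(\cL_x)_0$ by applying Proposition \ref{decomposition theorem} to an auxiliary pure sheaf built from $\cL_x$ truncated near $y$ (for instance, a pure subsheaf or quotient producing an extra summand of $\cL_x(y, \bdy y)$ matched to a generator of $\cL_x(y)$), contradicting rigidity. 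The bulk of the technical work lies in this final comparison, using Proposition \ref{decomposition theorem} to globalize the local obstruction into an honest sheaf endomorphism.
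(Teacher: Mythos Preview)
The paper does not actually supply its own proof of this lemma; it simply refers the reader to \cite[Remark 1.8]{BBFK2}. So there is no in-paper argument to compare against, and your attempt must be judged on its own.

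Your sufficiency direction ($\Leftarrow$) is correct and is exactly the standard argument: normalize at $x$, propagate by induction, and use the degree gap to force the difference $\phi_y - \id$ (which lands in $\cL_x(y,\bdy y)$) to kill every generator of $\cL_x(y)$.

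Your necessity direction ($\Rightarrow$), however, has a genuine gap. You correctly locate a flat $y$ where the degree condition fails and produce the local data $m \mapsto p\cdot n$ with $p\cdot n \in \cL_x(y,\bdy y)$, and you correctly note that compatibility \emph{below} $y$ is automatic. But the sentence ``which constrains $\phi_y$ to vanish on the images of all these restrictions'' misreads the lifting problem: extending to $z > y$ does not impose any constraint back on $\phi_y$. What is needed is simply to \emph{lift}: having defined $\phi$ on $\bdy z$, choose a free $\cA(z)$-basis of $\cL_x(z)$, apply $\bdy_z$, apply the already-defined $\phi|_{\bdy z}$, and lift the result back to $\cL_x(z)$ using surjectivity of $\bdy_z$ (flabbiness) in each graded piece. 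This defines $\phi_z$ compatibly, and proceeding by induction up the poset produces a nonzero nilpotent sheaf endomorphism, hence a non-scalar automorphism $\id + \phi$. Your detour through ``an auxiliary pure sheaf built from $\cL_x$ truncated near $y$'' and an appeal to Proposition~\ref{decomposition theorem} is both vague and unnecessary; the direct inductive lifting is short and uses only the two structural properties (pointwise freeness and flabbiness) already at hand.
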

\end{section}

\begin{section}{Hyperplane arrangements and hypertoric varieties}\label{hyper}
In this section we review a number of constructions related to hyperplane arrangements.
We explain how to associate to an arrangement a linear poset (the lattice of flats) 
and an algebraic variety (the hypertoric variety).  Our main purpose is to state 
Theorem \ref{main theorem 1}, which relates the equivariant intersection cohomology sheaf
on a hypertoric variety to a sheaf on the lattice of flats via the localization 
functor of Section \ref{linear posets section}.

\subsection{Hyperplane arrangements}\label{arrangements}
We briefly describe the notation and main constructions 
for hyperplane arrangements that we will use.
Let $I$ be a finite indexing set, and   
let $V$ be an affine linear subspace
of $\R^I$ which is not contained in any translate of a coordinate subspace
$\R^J$, $J\subsetneq I$.
% Consider an arbitrary parameter $\theta\in\R^I$, and let $V = V_0 + \theta$. 
Then we consider the collection $\cH$ of affine hyperplanes in $V$ 
formed by intersecting $V$ with the coordinate hyperplanes of 
$\R^I$:
$$H_i := V \cap \{x\in \R^I \mid x_i = 0\}.$$
% Clearly $V$ and thus $\cH$ only depend on 
% the image $\ol{\theta}$ of $\theta$ in $\R^I/V_0$.
Note that this is technically a multi-set; there is no reason
why the $H_i$ must be distinct, and in fact the phenomenon of repeated
hyperplanes will be forced upon us by one of the constructions that we will define presently.
The hyperplanes are also cooriented, meaning that they come equipped with normal vectors,
namely the restrictions of the coordinate linear forms on $\R^I$.
When we refer to an {\bf arrangement} in $V$, we will always mean a multi-set of 
cooriented affine hyperplanes in $V$ whose normal vectors span $V^*$.

A {\bf flat} of $\cH$ is a subset of $I$ of the form
$\{i \in I \mid x \in H_i\}$
for some $x \in V$.  Given a flat $F\subs I$, we set 
$$H_F := \bigcap_{i\in F} H_i.$$  This gives a bijection 
between the set of flats and the set of all possible
nonempty intersections of the hyperplanes $H_i$.
Let $L_\cH$ denote the poset of all flats of $\cH$ ordered by inclusion:
$E \le F$ whenever $E\subs F$, or equivalently $H_E \supset H_F$.  
It is a ranked poset; the
rank $\rk F$ of a flat $F$ is the codimension of $H_F$ in $V$, and the
rank of $\cH$ is $\dim V$, since a maximal flat $F$ has $H_F = 0$.

If $V$ is a vector subspace of $\R^I$ (that is if it contains the origin), 
the arrangement $\cH$ will be called {\bf central}.  Note that a central 
arrangement has a unique maximal flat, namely $I$ itself, and $H_I = \{0\}$.  
At the other end of the spectrum, if 
$V\subs\R^I$ is generic with respect to translation, 
then $|F| = \codim H_F$ for every flat $F$.
Such an arrangement will be called {\bf simple}.
% We note that an arrangement is simple if and only if the composition $$H_F \hookrightarrow V 
% \hookrightarrow\R^I\surj\R^F$$ is an isomorphism for every $F$.

For any flat $F$ of $\cH$, we may define two auxiliary arrangements as follows.
The {\bf restriction} of $\cH$ at $F$, 
denoted $\cH^F$, is
the arrangement defined by the inclusion of $H_F$ into
$$\R^{I\smallsetminus F}:=\{x\in\R^I\mid x_i=0\,\,\text{for all $i\in F$}\}.$$  
It is the arrangement
in $H_F$ with hyperplanes
$H_i \cap H_F$ for all $i \notin F$.  Its lattice of flats is isomorphic
to the ideal $\{E \in L_\cH \mid E \ge F\}$.

The {\bf localization} of $\cH$ at $F$, denoted $\cH_F$, 
is the arrangement given by the 
inclusion of $\pi_F(V)$ into $$\R^F := \R^I/\R^{I\smallsetminus F},$$ where
$\pi_F\colon \R^I \to \R^F$ is the coordinate projection.
Its hyperplanes are $\pi_F(H_i)$ for all $i\in F$, i.e.\ the images
of the hyperplanes of $\cH$ which contain $H_F$.
The lattice of flats $L_{\cH_F}$ 
is isomorphic to the interval $[\emptyset, F]$
of $L_\cH$.  The localization $\cH_F$ is always central, with 
$F$ as its unique maximal flat; this is because $\pi_F(H_F) = \{0\}$, 
so the origin is contained in $\pi_F(V)$. 

For any flat $F$, let $\la F\ra$ denote the {\bf linearization} of 
$H_F$, by which we mean the linear subspace of $\R^I$ obtained by translating $H_F$ back to the origin.
Let $V_0 := \la\emptyset\ra$ be the linearization of $V$, so that
$\la F \ra = V_0 \cap \R^{I\smallsetminus F}$ for any flat $F$. 
Let $$V(F) := \pi_F(V) = \pi_F(V_0) = V_0/\la F\ra;$$ 
it is the normal space of the inclusion 
of $H_F$ into $V$.  
% Note that the composition
% $$V \hookrightarrow \R^I \twoheadrightarrow \R^F$$
% factors through the quotient $V \to V(F)$ to give an 
% inclusion $V(F) \hookrightarrow \R^F$.  
% This gives another way to describe the localization $\cH_F$,
% as the arrangement induced by this inclusion.

The poset $L_\cH$ naturally becomes
a linear poset if we associate
to each flat $F$ the vector space $V(F)$, and to each 
pair $E \le F$ the natural quotient map $V(F)\to V(E)$.
We will denote the structure sheaf of this linear poset
by $\cA$ when the arrangement is clear from the context and
$\cA_\cH$ when it isn't.  Note that every stalk $\cA(F)$ of this sheaf
is a quotient of a single polynomial ring $A = \Sym V_0$.

\begin{remark}\label{simple}
The arrangement $\cH$ is simple if and only if every localization $\cH_F$ is a 
normal crossings arrangement, defined by the inclusion of
$\R^F$ into itself.  In other words, simplicity is equivalent to 
the property that the inclusion $V(F) \hookrightarrow \R^F$ is an isomorphism
for each flat $F$.
\end{remark}

For any arrangement $\cH$ defined by $V \subset \R^I$, 
we can find a translation $\tV$ of $V\subs\R^I$ with the property that 
the associated arrangement $\tcH$ is simple.
If we identify $\tV$ with $V$ by an affine transformation of $\R^I$,
the hyperplane $\wt{H}_i$ of $\tcH$ is a translation of the hyperplane $H_i$ of $\cH$,
and simplicity of $\wt{H}$ means 
that these translations are maximally generic with respect to intersections.
We then have a canonical surjection $$\pi:L_{\tcH}\to L_\cH,$$ 
taking each flat of $\tcH$ to the minimal
flat of $\cH$ that contains it, with the property that $$\pi^*\cA_{L_\cH} = \cA_{L_{\tcH}}.$$
We will refer to the arrangement $\tcH$ as a {\bf simplification} of $\cH$.
Observe that a simplification $\tcH$ of $\cH$ induces a simplification $\wt{\cH_F}$
of the localization $\cH_F$ at any flat $F$.

Finally, suppose that $V\subset \R^I$ is defined over the rational
numbers.  We say that the resulting arrangement $\cH$ is
{\bf unimodular} if
for each subset $S\subset I$, the projection of the lattice
$V\cap \Z^I$ to $\Z^S$ has no cotorsion.   Unimodularity depends
only on $V_0$ (rather than its translation $V$), and it is preserved by 
restriction and localization.

\begin{remark}\label{seymour}
Seymour \cite{Se} gives what amounts to a classification of unimodular
arrangements up to repetition and translation of hyperplanes.
More precisely, he shows that every central unimodular arrangement
can be built out of graphic arrangements (subarrangements of the
braid arrangement), their Gale duals, and one exceptional example,
using three elementary gluing constructions.
See \cite[1.2.5, 3.1.1]{Wh} for a detailed statement of this result.
\end{remark}

\begin{remark}
The linear poset structure on $L_\cH$ and the one on the set of
cones in a fan have a common generalization.  Let $W$ be a 
finite dimensional vector space, and take a collection 
$\{u_i \mid i \in S\}$ of nonzero vectors 
in $W$ indexed by a finite set $S$.  Then for any subset $E \subset S$, we
put $V(E) := \Span\{u_i \mid i\in E\}^*$.  If $P \subset 2^S$ is
a collection of subsets of $S$, ordered by inclusion, this 
defines a linear poset structure on $P$ whose restriction maps are the 
natural quotient morphisms.  

If $\Sigma$ is a fan in $W$, the associated linear poset 
is given by taking $S$ to be the collection of $1$-cones, 
$u_\rho$ any nonzero vector in $\rho$, and 
$P$ the collection of all 
sets of the form $\{\rho \in S \mid \rho \subset \sig\}$ where
$\sig \in \Sigma$.  
If $\cH$ is an arrangement in $V$ with indexing set $I$, we get 
our linear poset structure on $L_\cH$ 
by taking $W = V_0^*$, $S = I$, $P = L_\cH$, 
and letting $u_i$ be the normal vector
to the $i$th hyperplane (or in other words the restriction of
the coordinate function $x_i\colon \R^I \to \R$ to $V_0$) 
for any $i\in I$.  

It is then easy to see that Example \ref{second} is the linear
poset associated to a central arrangement of three lines in the plane 
whose normal vectors sum to zero, while Example 
\ref{first} comes from any simplification of this arrangement.  
\end{remark}

\subsection{Matroid and broken circuit complexes}\label{mat}
Here we collect a few definitions and well-known results from algebraic combinatorics
that we will need in Section \ref{Lattice of flats}.
A {\bf simplicial complex} $\D$ on the ground set $I$ is a nonempty 
collection of subsets of $I$, called {\bf faces}, that is closed under inclusion:  
$S'\subs S\in\D\impl S'\in\D$.  Given such a $\D$, its {\bf face ring}
$\R[\D]$ is defined to be the quotient of the polynomial ring $\Sym\R^I = \R[e_i]_{i\in I}$
by the ideal generated by square-free monomials of the form $e_S := \prod_{i\in S}e_i$ for $S\subs I$ {\em not} a face.
To fit with our interpretation of this ring as equivariant cohomology, we place
the generators $e_i$ in degree two.

At several points we will make use of the fact that 
an inclusion $\Delta' \subset \Delta$ of simplicial complexes induces
a canonical homomorphism $\R[\Delta] \to \R[\Delta']$ of face rings
by sending the generator $e_i$ of $\R[\Delta]$ to the corresponding
generator $e'_i$ of $\R[\Delta']$ if $\{i\} \in \Delta'$, and to 
$0$ otherwise.  We will refer to this 
as the restriction homomorphism dual to the inclusion.
% The face ring $\R[\D]$ has Krull dimension $d$, where $d+1$ is the order of the largest face of $\D$.

The simplicial complex $\D$ is called {\bf Cohen-Macaulay} if there exists a vector subspace $W\subs\R^I$
such that $\R[\D]$ is a finitely generated free module over the polynomial ring $\Sym W$.  
In particular this implies that all of the maximal faces of $\Delta$ must have cardinality $d = \dim W$.
If $\Delta$ is Cohen-Macaulay, 
its {\bf $\mathbf{h}$-polynomial} $h_\Delta(q)$ is defined to the Hilbert
series of $$\R[\Delta]\otimes_{\Sym W}\R$$ with degrees reduced by half.  In other words, the
coefficient of $q^k$ is the number of generators of $\R[\Delta]$ over $\Sym W$ in degree $2k$.
It can be computed by the formula
\[h_\Delta(q) = \sum_{k \ge 0} f_k(\Delta)\, q^k\, (1-q)^{d-k},\]
where $f_k(\Delta)$ is the number of faces of $\Delta$ of cardinality
$k$ (i.e.\ simplices of dimension $k-1$) \cite[\S II.2]{St}.  

There are two classes of simplicial complex that will interest us.
The first is the {\bf matroid complex} associated to an affine linear
subspace $V\subs\R^I$, where a set $S\subs I$ is a face if and only if
the composition $V\hookrightarrow\R^I\surj\R^S$ is surjective.  We
denote this complex $\D_\cH$.  Note that with our conventions, a
central arrangement and its simplification have the same associated
matroid complex: a subset $S\subs I$ is a face if and only if the
normal vectors to the corresponding hyperplanes form an independent
set.  If the arrangement is simple, then faces of $\Delta_\cH$ 
may equivalently be
characterized as sets of hyperplanes with nonempty intersection.
In other words, the posets $\Delta_\cH$ and $L_\cH$ are the same
for a simple arrangement $\cH$.

The second type of simplicial complex that we need is the {\bf
broken circuit complex} of a matroid complex.  Given a matroid complex
$\D$, a {\bf circuit} is a minimal subset of $I$ that is not a face of
$\D$.  Given an ordering $\sigma$ of $I$, a {\bf broken circuit} is a
set obtained by removing the $\sigma$-minimal element of a circuit,
and the broken circuit complex $\D^{bc}_\cH$ is defined to be
subcomplex of $\D$ consisting of those faces that do not contain any
broken circuit.

Both matroid complexes and their broken circuit complexes are
Cohen-Macaulay, with the subspace $V_0\subs\R^I$ serving as an
appropriate $W$ (see, for example, \cite[\S 4]{HS} and \cite[Prop 1]{PS}).  
While the broken circuit complex depends
on the ordering $\sigma$, its $h$-polynomial does not \cite[\S 7.4]{Bj}.  We
will denote the $h$-polynomials of $\D_\cH$ and any of its broken
circuit complexes by $h_\cH(q)$ and $h_\cH^{bc}(q)$, respectively.
The degree of $h_\cH(q)$ is less than or equal to the rank of $\cH$,
while the degree of $h_\cH^{bc}(q)$ is strictly less than the rank of
$\cH$ \cite[\S 7.4]{Bj}.

\subsection{Hypertoric varieties}
\label{Hypertoric basics}
In this section we assume that the affine linear subspace $V\subs\R^I$ is spanned by its rational points,
and we explain how to use it to construct
a complex algebraic variety called a {\bf hypertoric variety}, originally introduced
by Bielawski and Dancer \cite{BD}.  We will mainly be interested in
how hypertoric varieties look topologically, as stratified spaces, using the stratification
introduced in \cite[\S2]{PW}.  % For more on their geometric and algebraic structures, see \cite{PW}.

Consider the coordinate torus $T^I = U(1)^I$.  We will think of $T^I$
as the torus whose Lie algebra is {\em dual} to $\R^I$; in other words, we will
identify $T^I$ with the quotient of $(\R^I)^*$ by the standard lattice $(\Z^I)^*$.
Define an action of $T^I$ on the complex symplectic
vector space $\H^I = T^*\C^I = \C^I \times (\C^I)^*$ by
letting $T^I$ act by coordinate multiplication
on $\C^I$, and by the contragredient action on $(\C^I)^*$.
Then $\H^I$ carries a natural hyperk\"ahler structure so that
this action is hyperhamiltonian with 
moment map 
$$\Psi = (\Psi_{\R}, \Psi_{\C}): \H^I \to \R^I\times\C^I \,\cong\, \R^I\otimes_\R\Im\H.$$
The components $\Psi_{\R}$ and $\Psi_{\C}$ of $\Psi$ are given by the formulas
\[\Psi_{\R}(z, w) = \sum_{i\in I} (|z_i|^2 - |w_i|^2)\delta_i\,\,\, \text{ and }\,\,\,
\Psi_{\C}(z,w) = \sum_{i\in I} 2z_iw_i\delta_i,\]
where $\delta_i$ is the $i^\text{th}$ standard basis vector in $\R^I$ or $\C^I$.

The map $\Psi$ is $T^I$-invariant; in fact, the fibers are single
$T^I$-orbits, so $\Psi$ identifies the quotient $\H^I/T^I$ with the target $\R^I\otimes_\R\Im\H$.  
The stabilizer of a point $(z, w) \in \H^I$ is the coordinate subtorus 
$$T_{(z,w)} := \{t\in T^I \mid t_i \ne 1 \implies z_i = w_i = 0\;\text{for all}\;i\in I\}.$$

The hypertoric variety $\M_\cH$ is a hyperk\"ahler quotient of 
$\H^I$ by a subtorus of $T^I$.  More precisely,
let $$T_{V_0} =  {V_0}^\perp\bigmod (V_0^\perp\cap(\Z^I)^*)
\subs (\R^I)^*\bigmod (\Z^I)^*$$ be the 
% $(n-d)$-dimensional 
subtorus of $T^I$ whose Lie algebra is $V^\perp_0$, and let
$$\hkv := V\times V_0^\C\subs\R^I\times\C^I.$$
Then
$$\M_\cH := \Psi^{-1}(\hkv)/T_{V_0}$$ is defined to be the quotient of the preimage of $\hkv$ by the action of $T_{V_0}$.
The hypertoric variety $\M_\cH$ is rationally smooth
if and only if the arrangement $\cH$ is simple, and
it is smooth if and only if $\cH$ is simple and unimodular \cite[3.2 \& 3.3]{BD}.

\begin{remark}
We have just defined hypertoric varieties in the manner most convenient
for our purposes here, but there are also more algebraic ways
to think about these spaces.  First, the hyperk\"ahler
quotient may be thought of as a holomorphic symplectic quotient, in which
the torus $T_{V_0}$ is replaced by its complexification and the real moment map
equation is replaced by a stability condition.  Alternatively, a smooth hypertoric
variety may be defined as a union of the cotangent bundles of the toric
varieties that appear in Remark \ref{core}.  See \cite[\S1.1 \& Rmk. 2.1.6]{Pr}
for more details.
\end{remark}

Consider the affine subspace arrangement of $\hkv$ consisting of the subspaces
$$\hkhi := H_i\times (H_i)_0^\C$$ for all $i\in I$, where $(H_i)_0^\C\subs V_0^\C$ is the complexification
of the linearization of $H_i$.
For all flats $F$ of $\cH$, let $$\hkhf = \bigcap_{i\in F} \hkhi\,\,\, \text{ and } \,\,\,
% so that $H_{F} \mapsto \hkhf$ gives a bijection between flats of $\cH$ and
% flats of $\hhk$ which preserves the partial order. % and triples the dimension.
% For any flat $F$, we set 
\circhkhf := \hkhf \smallsetminus \bigcup_{E < F} H^{\text{hk}}_{E}.$$

Let $T = T^I/T_{V_0}$ be the torus with Lie algebra $V_0^*$.  The action of $T^I$ on $\H^I$
induces an action of $T$ on $\M_\cH\,$ with hyperk\"ahler moment map
$\mu\colon \M_\cH \to \hkv$ induced by $\Psi$.  
Each fiber of $\mu$ is a single $T$-orbit, and the Lie algebra stabilizer of
a point of $\mu^{-1}(p)$ for $p \in \circhkhf$ is $V(F)^* = H_F^\perp\subs V^*$.

The space $\M_\cH$ has a decomposition $\sS$ into the pieces
$$S_F := \mu^{-1}(\circhkhf), \;F \in L_\cH.$$  Equivalently,
$S_F$ is the image of the points $(z,w)\in \Psi^{-1}(\hkv)$
for which $z_i = w_i = 0$ if and only if $i \in F$.  
It is easy to see that $S_E \subset \ol{S_F}$
if and only if $F \subset E$ for any flats $E,F$.
Every point in $S_F$ has the same stabilizer $T_F$,
namely the subtorus of $T$ with Lie algebra $V(F)^*$.
Thus if we induce a linear poset structure on the lattice of flats $L_\cH$
by applying the construction of 
Section \ref{sheaves on posets} to the pair $(\M,\sS)$,
we get exactly the one given in Section \ref{arrangements}.

% \begin{remark}
% It is also possible to use $(\theta, \theta, \theta)$ 
% in place of $(\theta, 0, 0)$ as the translation 
% parameter for $\hK{V}$, or even to use three different coordinates, but 
% this doesn't lead to a larger class of topological
% spaces (the differences are in the hyperk\"ahler structures).
% Our choice of parameter will simplify the discussion below of
% putting the structure of an algebraic variety on $\M_\cH$. 
% \end{remark}

The following result, which is similar to Lemmas 2.4 and 
2.5 of \cite{PW}, describes the local structure of
this decomposition.  Note that for any flat $F$ the
torus $T_F$ is naturally isomorphic to the torus
which acts on $\M_{\cH_F}$, and $T/T_F$ is naturally
isomorphic to the torus which acts on $\M_{\cH^F}$.

\begin{proposition}\label{strata and slices}
 $\sS$ is a $T$-stratification of $\M$; the normal slice
to a point in the stratum $S_F$ can be taken to be 
isomorphic as a stratified $T_F$-space to $\M_{\cH_F}$.
Furthermore, the closure $\overline{S_F}$ of a 
stratum is isomorphic as a stratified $T/T_F$-space
to $\M_{\cH^F}$.
\end{proposition}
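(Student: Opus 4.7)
The plan is to produce a local model for the decomposition directly from the hyperkähler quotient construction, using the slice theorem for the ambient $T^I$-action on $\H^I$. Fix a flat $F$ and a point $p \in S_F$, and pick a lift $\tilde p = (z,w) \in \Psi^{-1}(\hkv)$ with $z_i = w_i = 0$ precisely for $i \in F$. Since the stabilizer of $\tilde p$ in $T^I$ is the coordinate subtorus $T^F$, the standard slice theorem identifies a $T^I$-invariant neighborhood of $T^I\tilde p$ in $\H^I$ with $T^I \times^{T^F}(N_1 \times N_2)$, where $N_1 \subset \H^F$ is a $T^F$-invariant neighborhood of the origin and $N_2 \subset \H^{I\setminus F}$ is a $T^F$-invariant neighborhood of $\tilde p|_{I\setminus F}$ on which $T^F$ acts trivially (the $T^F$-orbit direction is absorbed into the $T^I$-orbit since all coordinates of $\tilde p$ indexed by $I \setminus F$ are nonzero). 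Under this identification, the moment map on the slice factors additively as $\Psi^F(n_1) + \Psi^{I\setminus F}(n_2)$, where $\Psi^F$ and $\Psi^{I\setminus F}$ are the standard hyperkähler moment maps on $\H^F$ and $\H^{I\setminus F}$.

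Near $\Psi(\tilde p) \in \circhkhf$, the affine space $\hkv$ splits locally as a product $\hkhf \times W$, where $W$ is a transversal neighborhood of $\Psi(\tilde p)$ in $V(F) \otimes_\R \H$, and the arrangement on $W$ induced by the $\hkhi$ (after translating $\Psi(\tilde p)$ to the origin) is precisely the hyperkähler arrangement associated to $\cH_F$. Consequently the equation $\Psi(\tilde p + n_1 + n_2) \in \hkv$ decouples into $\Psi^{I\setminus F}(n_2) \in \hkhf$ and $\Psi^F(n_1) \in W$. I would then descend through the $T_{V_0}$-quotient factor by factor, using the elementary identity $V_0^\perp/(V_0^\perp \cap (\R^F)^*) = \la F\ra^\perp$ (both sides regarded as subspaces of $(\R^{I\setminus F})^*$) to check that the image of $T_{V_0}$ in $T^{I\setminus F}$ coincides with $T_{\la F\ra}$. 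The $N_2$-factor then quotients to a neighborhood of the image of $p$ in $\M_{\cH^F}$, while the action on $N_1$ of the kernel $T_{V_0}\cap T^F$ (which has Lie algebra $V(F)^\perp \subset (\R^F)^*$) reduces $\H^F$ to $\M_{\cH_F}$, leaving $T_F$ as the residual stabilizer. Assembling these pieces, a $T$-invariant neighborhood of $Tp \subset \M_\cH$ becomes $T$-equivariantly diffeomorphic to $T \times^{T_F} U_F \times U^F$, with $U_F \subset \M_{\cH_F}$ a neighborhood of the origin and $U^F \subset \M_{\cH^F}$ a neighborhood of the image of $p$. The $\R_{>0}$-rescaling action on $\H^F$ exhibits $\M_{\cH_F}$ globally as an open cone on its link at the origin (using centrality of $\cH_F$), yielding the cone structure on the normal slice; induction on $|L_\cH|$ supplies the link with its own $T_F$-stratification.

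The global identification $\overline{S_F} \cong \M_{\cH^F}$ follows from the same calculation performed without localizing: $\overline{S_F} = \mu^{-1}(\hkhf)$ is the image under the $T_{V_0}$-quotient of $\{(z,w)\in \H^I : z_i = w_i = 0 \text{ for all } i \in F \text{ and } \Psi(z,w) \in \hkhf\}$, which restricted to the $\H^{I\setminus F}$-factor is manifestly the hyperkähler quotient defining $\M_{\cH^F}$, once again using the identification of the effective action of $T_{V_0}$ with $T_{\la F\ra}$. The main obstacle in executing this plan is the bookkeeping for the torus quotients: $T_{V_0}$ does not literally split according to $\H^I = \H^F \oplus \H^{I\setminus F}$, and verifying that the two factor reductions recover $\M_{\cH_F}$ and $\M_{\cH^F}$ on the nose requires the linear-algebraic identities above, together with care that no residual finite-group obstructions arise; these are handled cleanly because the entire remaining $T$-action is available to absorb any loose torus factors. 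Matching the induced stratifications on the two sides is then immediate from the combinatorial bijections between the flats of $\cH^F$ (respectively $\cH_F$) and the flats of $\cH$ containing (respectively contained in) $F$.
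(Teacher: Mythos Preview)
Your approach is correct and essentially matches the paper's: both lift to $\Psi^{-1}(\hkv)$, split $\H^I = \H^F \oplus \H^{I\setminus F}$, factor the moment map accordingly, and use the $\R_{>0}$-scaling on $\H^F$ for the cone structure. The only organizational difference is that the paper builds the tubular-neighborhood homeomorphism upstairs explicitly (via an angular map $\tau\colon U \to T^{I\setminus F}$ reading off the arguments of the nonzero coordinates) and then descends through the $T_{V_0}$-quotient in one step using Lemma~\ref{stratification remarks}, thereby sidestepping the factor-by-factor torus bookkeeping you flag as the main obstacle.
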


\begin{proof}
 By Lemma \ref{stratification remarks}, to show that $\sS$ is
a $T$-stratification it is enough to work ``upstairs'' 
and show that the decomposition of $\Psi^{-1}(\hkv)$ into
the sets $\Psi^{-1}(\circhkhf)$, $F \in L_{\cH}$ is a 
$T^I$-stratification.  To see this, fix $F$ and
take a point $p\in \Psi^{-1}(\circhkhf)$.  
Choose an $\R$-vector subspace $N \subset \hkv_0 = V_0 \times V_0^\C$
which is complementary to $\hkhf$.
Then we can find an open disk $D \subset \hkhf$ 
centered at $p$ and (shrinking $D$ if necessary)  
an open disk $B \subset N$ centered at $0$ so that
$B + D$ meets $\hkh_E$ if and only if $E \le F$. 

Let $U = \Psi^{-1}(B + D)$.  If $\Psi_F\colon \H^F \to (\Im \H)^F$
is the hyperk\"ahler moment map for $\H^F$, then we have a map 
$U \to \Psi^{-1}_F(V(F)^\text{hk})$ %= \Psi^{-1}_F(\pi_F(V)^\text{hk})$ 
given by restricting the projection $\H^I \to \H^F$.   Its image is 
$\Psi_F^{-1}(B_F)$, where $B_F \cong B$ is the image of $B$ under the 
projection $\pi_F \otimes 1_{\Im \H}$.  This will be the normal slice
to the stratum $\Psi^{-1}(\circhkhf)$.

Define a map $\tau\colon U \to T^{I\setminus F}$ as follows.  
Suppose that the $i$th coordinate of $p$ is $z_i + w_i\, \mathbf{j}$, where 
$z_i$, $w_i \in \C$.  Then if a point $q \in U$ has coordinates $z'_i + w'_i\, \mathbf{j}$, 
we let the $i$th coordinate of $\tau(q)$ be $z'_i/|z'_i|$ if $z_i \ne 0$
and $w'_i/|w'_i|$ otherwise (we can shrink the disks $B$ and $D$ if
necessary to ensure that $z'_i \ne 0$ for all points in $U$ in the first case, and
$w'_i\ne 0$ in the second case).  

Consider the map $U \to \Psi_F^{-1}(B_F) \times D$ which sends $q$ to
$(\psi(q), d)$, where $\Psi(q) = b + d$, $b \in B$, $d \in D$.  
Its restriction to $\tau^{-1}(1)$ is a continuous, proper bijection,
and so it is a homeomorphism.  If $\sigma$ denotes its inverse, then
$(t, x, y) \mapsto t\cdot \sigma(x, y)$ defines the required
homeomorphism
\[T^I \times_{T^F} \Psi_F^{-1}(B_F) \times D \to U.\]

Since $V(F)$ is a linear subspace of $\R^F$, 
the stratifications of $V(F)^\text{hk}$ and $\Psi^{-1}_F(V(F)^\text{hk})$ are 
invariant under the the multiplication action of
$\R^+$.  Thus we have a stratum-preserving
homeomorphism $\Psi_F^{-1}(B_F) \cong \Psi_F^{-1}(V(F)^\text{hk})$, 
and the normal slice is topologically a cone, as required.  In addition, this
implies that the normal slice to $S_F$ in $\M$ is isomorphic as a $T_F$-stratified
space to $\Psi_F^{-1}(V(F)^\text{hk})/T_{V(F)_0} = \M_{\cH_F}$.

Finally, the identification of $\overline{S_F}$ with $\M_{\cH^F}$ follows
easily by restricting to $\H^{I\setminus F}$.
\end{proof}

\vspace{-\baselineskip}
\begin{corollary}
The stratification $\sS$ of $\M_\cH$
satisfies the conditions used in Section
\ref{localization functor} to define the localization functor.
\end{corollary}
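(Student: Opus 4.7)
The plan is to verify the three conditions (A), (B), (C) of Section \ref{localization functor} in turn. Condition (A), that $\sS$ is a $T$-stratification, is already asserted by Proposition \ref{strata and slices}, so no further work is needed there.

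For condition (B), I would exploit that $\mu\colon \M_\cH \to \hkv$ has each fiber a single $T$-orbit, so it descends to a homeomorphism $\M_\cH/T \cong \hkv$ which restricts to $S_F/T \cong \circhkhf$. The set $\circhkhf$ is obtained from the real affine space $\hkhf = H_F \times (H_F)_0^\C$ by removing a finite union of closed proper affine subspaces $\hkh_E$, indexed by the flats $E \supsetneq F$. A direct dimension count shows each such $\hkh_E$ has real codimension $3(\rank E - \rank F) \ge 3$ in $\hkhf$: one factor comes from $H_E \subset H_F$, and two more from the complexification $(H_E)_0^\C \subset (H_F)_0^\C$. A standard transversality argument then shows that removing a finite union of closed submanifolds of real codimension at least three from a contractible manifold leaves a simply connected space, giving (B).

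For condition (C), I plan to bootstrap from (B) using the local description provided by Proposition \ref{strata and slices}. That proposition identifies the normal slice to the stratum $S_F$ with the stratified space $\M_{\cH_F}$; in particular the link $L$ is $\M_{\cH_F}\setminus\{\text{apex}\}$ modulo the natural $\R^+$-scaling action used in the proof of Proposition \ref{strata and slices}, and the piece $S^F_E$ of $L$ is the image of the stratum of $\M_{\cH_F}$ indexed by a proper flat $E$ of $L_{\cH_F}$. Connectedness of $S^F_E$ therefore reduces to connectedness of that stratum of $\M_{\cH_F}$, which follows by applying the argument of (B) to $\cH_F$: the stratum fibers via the moment map for $\M_{\cH_F}$ over a connected open subset of an affine space, with fiber the quotient of a connected torus (hence connected), so it is itself connected, and passing to the $\R^+$-quotient preserves this.

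I expect the main obstacle to be the careful bookkeeping in (C) needed to identify the abstract link strata $S^F_E$ from Definition \ref{T-stratification} with concrete hypertoric strata of $\M_{\cH_F}$ via the cone structure supplied by Proposition \ref{strata and slices}; once that identification is in hand, both (B) and (C) reduce to the same moment-map-plus-codimension argument applied to $\cH$ and to $\cH_F$ respectively.
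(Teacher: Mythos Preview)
Your proposal is correct and follows essentially the same route as the paper: condition (A) is Proposition \ref{strata and slices}; condition (B) is the codimension-three complement argument via the identification $S_F/T \cong \circhkhf$; and condition (C) reduces via Proposition \ref{strata and slices} to the connectedness of strata in the hypertoric variety $\M_{\cH_F}$. The paper is terser --- it simply cites ``the fact that strata of hypertoric varieties are connected'' for (C) --- whereas you spell out why this holds by reapplying the moment-map argument from (B), but the underlying logic is the same.
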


\begin{proof}
We have already shown that it is a topological $T$-stratification. 
The condition (B) follows from the fact that for every flat $F$, the space 
$$S_F/T \cong \circhkhf$$ is a complement of a collection
of codimension three subspaces of $\hkv$, and is therefore simply
connected.  Condition (C) follows from Proposition \ref{strata and slices}
and the fact that strata of hypertoric varieties are connected.
\end{proof}

\vspace{-\baselineskip}
\subsection{Intersection cohomology of hypertoric varieties}
We can now state our first main theorem, which says that the intersection cohomology groups of
the hypertoric variety $\M_\cH$ may be computed in terms of sheaves on $L_\cH$. 
Let $\cL = \cL_\emptyset$ be the minimal
extension sheaf on $L_\cH$ with 
maximal support, and let $\IC_T(\M_\cH)$ be the equivariant intersection 
cohomology sheaf of $\M_\cH$.

\begin{theorem} \label{main theorem 1}
The sheaf $\cL$ is rigid.  There is an isomorphism
\[\Loc \IC_T(\M_\cH) \cong \cL\]
of graded $\cA_\cH$-modules,
and the map $$\Gamma: \IH^\udot_T(\M_\cH)\to\cL(L_\cH)$$ of Equation \eqref{localization homomorphism}
is an isomorphism of graded $\Sym V_0$-modules.
\end{theorem}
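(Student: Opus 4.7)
The plan is to prove the three assertions in sequence by exploiting the decomposition theorem applied to the canonical orbifold resolution $p\colon\wt\M\to\M_\cH$, where $\wt\M=\M_{\wt\cH}$ comes from a simplification $\wt\cH$ of $\cH$. Since $\wt\cH$ is simple, $\wt\M$ is rationally smooth, so $\IC_T(\wt\M)$ is a shift of the equivariant constant sheaf and $\Loc\IC_T(\wt\M)=\cA_{\wt\cH}$. A direct check shows that the structure sheaf of a simple arrangement is pure: pointwise freeness is immediate, and flabbiness reduces to the fact that each localization $\wt\cH_F$ is a normal-crossings arrangement, so the restriction maps are coordinate projections between polynomial rings. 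Using that localization commutes with pushforward (Remark \ref{decomp theorem remark}), we obtain $\Loc(p_*\IC_T(\wt\M))=\pi_*\cA_{\wt\cH}$, which is therefore pure. The equivariant decomposition theorem splits $p_*\IC_T(\wt\M)$ as a direct sum of shifts of $\IC_T(\overline{S_F})$ for various flats $F$, with $\IC_T(\M_\cH)$ occurring once in degree zero; since both pointwise freeness and flabbiness are inherited by direct summands, $\Loc\IC_T(\M_\cH)$ is pure.

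To identify this pure module with $\cL_\emptyset$ and verify rigidity, I would argue as follows. By Proposition \ref{decomposition theorem}, $\Loc\IC_T(\M_\cH)\cong\bigoplus_F K_F\otimes_k\cL_F$ for graded multiplicity spaces $K_F=\ker\ol{\bdy_F}$, and restricting to the open stratum $S_\emptyset$ via Lemma \ref{stalk lemma} pins down $K_\emptyset=k$ concentrated in degree $0$. To kill the remaining $K_F$, I would invoke the defining stalk support axiom of $\IC(\M_\cH)$: the stalk at a point of $S_F$ vanishes in cohomological degrees $\geq\rk F$. Via \eqref{localized stalks}, this bounds from above the internal degrees in which new generators of $\cL(F)$ can appear, forcing $K_F=0$ for $F>\emptyset$ by a degree count. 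Rigidity of $\cL$ then follows from Lemma \ref{rigidity criterion} with $d=2\rk F-2$: the stalk support axiom bounds generators of $\cL(F)$ from above, and Verdier self-duality of $\IC$ bounds generators of $\cL(F,\bdy F)$ from below.

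For the assertion about $\Gamma$, I would compare graded ranks over $A=\Sym V_0$. Using $\Loc\IC_T(\M_\cH)=\cL$ together with the decomposition \eqref{decomposition}, the graded rank of $\cL(L_\cH)$ is determined by the broken circuit $h$-polynomials of the localizations $\cH_F$ from Section \ref{mat}; by \cite{PW}, this matches the equivariant IH Poincar\'e series of $\M_\cH$. A Morse-theoretic argument using a generic linear component of $\mu_\R$ yields equivariant formality, so $\IH_T^\udot(\M_\cH)$ is a free $A$-module. Since $\Gamma$ is compatible with the stalk isomorphisms \eqref{localized stalks} by construction, after reducing modulo the augmentation ideal $A_+\subset A$ it induces the expected isomorphism of ordinary IH groups; combined with equality of graded $A$-ranks, this forces $\Gamma$ itself to be an isomorphism.

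The principal obstacle is coordinating the equivariant decomposition theorem with the algebra of minimal extension sheaves: one must verify that the geometric multiplicities arising from the splitting of $p_*\IC_T(\wt\M)$ match the algebraic multiplicities in Proposition \ref{decomposition theorem} exactly, and that the degree bounds extracted from the IC stalk and cosupport axioms align precisely with those required by Lemma \ref{rigidity criterion}.
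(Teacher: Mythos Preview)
Your overall architecture matches the paper's: both pass through the orbifold resolution $\varpi\colon\M_{\wt\cH}\to\M_\cH$, identify $\Loc(\varpi_*\R_{\wt\M,T})$ with $\cE=\pi_*\cA_{\wt\cH}$, and extract $\Loc\IC_T(\M_\cH)$ as a summand via the decomposition theorem. But two of your key steps do not go through as written.

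\textbf{The identification $K_F=0$.} The stalk support axiom alone cannot force $K_F=0$. First, the numerical bound is off: the complex codimension of $S_F$ in $\M_\cH$ is $2\rk F$, not $\rk F$ (the normal slice is $\M_{\cH_F}$, of complex dimension $2\rk F$), so the axiom gives vanishing of the ordinary stalk in degrees $\geq 2\rk F$. More seriously, even with the corrected bound, knowing that $\ol{(\Loc\IC_T)(F)}$ vanishes above degree $2\rk F-2$ does not preclude a summand $K_F\otimes\cL_F$ with $K_F$ sitting in some degree $d<2\rk F$. To rule this out you would need a complementary lower bound---either by identifying $(\Loc\IC_T)(F,\bdy F)$ with the equivariant costalk $H^\udot_T(S_F;j_F^!\IC)$ and invoking the cosupport axiom, which you have not established (this identification is not formal for $\Loc$; the paper only obtains it for the refined functor $\Loch$ in Section~\ref{lifting ring structure}), or by pinning down exactly where $K_F$ can live inside the combinatorial decomposition of $\cE$. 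The paper does the latter: Lemma~\ref{Combinatorially semi-small}(c) shows that every $\cL_F$ appears in $\cE$ only with shift $2\rk F$, and then matches the stalk Hilbert series of $\Loc\IC_T$ (Corollary~\ref{ic stalk betti}, coming from Theorem~\ref{ihbetti} of \cite{PW}) against that of $\cL_\emptyset$ (Corollary~\ref{Betti number theorem}). Both equal $h^{bc}_{\cH_F}(t^2)$, so the summand is forced to be $\cL_\emptyset$. Your ``degree count'' is exactly the step that needs this input, and the support axiom by itself does not supply it.

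\textbf{Rigidity and $\Gamma$.} The paper proves rigidity of $\cL$ purely combinatorially (Corollary~\ref{rigid MES}, using Lemma~\ref{Combinatorially semi-small}), \emph{before} any geometry; your route instead deduces it from the IC axioms after the identification, which inherits the gap above. For $\Gamma$, your argument via equivariant formality and a rank comparison is more indirect than necessary and leaves unjustified the claim that $\Gamma\otimes_A\R$ is an isomorphism (and, in the non-central case, that $\cL(L_\cH)$ is a free $A$-module). The paper's argument is cleaner: it checks directly that $\Gamma_B$ is an isomorphism for $B=\varpi_*\R_{\wt\M,T}$ (Propositions~\ref{simple case} and~\ref{pushforward prop} plus Lemma~\ref{Sections gives face ring}), and then observes that this property passes to direct summands.
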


\begin{remark}
Since $\cL$ is rigid, the isomorphism of sheaves in Theorem \ref{main theorem 1} is necessarily
unique up to scalar multiplication.  It can be made completely unique using the fact 
that the two graded vector spaces of global sections
are each canonically isomorphic to $\R$ in degree zero.
\end{remark}

We conclude the section with a proposition that will be essential to the proof of
Theorem \ref{main theorem 1}.  It is proved in \cite[\S 2]{PW}, using an alternative,
algebro-geometric construction of hypertoric varieties.

\begin{proposition}\label{resolution}
Let $\cH$ be an arrangement, and $\tcH$ a simplification of $\cH$.
There is a natural semismall projective map $\varpi:\M_\tcH \to \M_\cH$.
This map restricts to an equivariant fiber bundle over each stratum of $\M_\cH$, 
and the fiber over the stratum $S_F$ is $T_F$-equivariantly homotopy equivalent
to $\M_{\wt{\cH_F}}\,$.
\end{proposition}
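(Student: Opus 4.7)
The plan is to construct $\varpi$ algebro-geometrically and then reduce all of its claimed properties to a local statement via a parallel of the normal-form construction of Proposition \ref{strata and slices}. First, I would describe both $\M_\cH$ and $\M_{\wt\cH}$ as GIT quotients of the affine variety $\Psi_\C^{-1}(V_0^\C)\subset\H^I$ by the complexified torus $(T_{V_0})_\C$, with the affine subspaces $V$ and $\tV$ determining the characters (equivalently the real moment map values) that cut out the two semistable loci. Since $\tV$ is a small generic perturbation of $V$, the character corresponding to $\wt\cH$ lies in a chamber of the GIT fan whose closure contains the parameter for $\cH$; variation of GIT then produces a canonical $T$-equivariant projective morphism $\varpi\colon\M_{\wt\cH}\to\M_\cH$.

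Next, I would analyze $\varpi$ locally over each stratum. Running the normal-form argument of Proposition \ref{strata and slices} simultaneously for $\cH$ and $\wt\cH$ at a point $p\in S_F$ produces compatible $T$-equivariant identifications of a neighborhood $U$ of $T\cdot p$ in $\M_\cH$ with $T\times_{T_F}\M_{\cH_F}\times D$, and of $\varpi^{-1}(U)$ with $T\times_{T_F}\M_{\wt{\cH_F}}\times D$, under which $\varpi$ becomes the product of $\id_{T/T_F\times D}$ with the natural partial resolution $\M_{\wt{\cH_F}}\to\M_{\cH_F}$. The compatibility is possible because the GIT construction is functorial under the coordinate projection $\H^I\to\H^F$ used to build the normal slice. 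Consequently, $\varpi$ restricts to a $T$-equivariant fiber bundle over $S_F$ whose fiber is the preimage of the unique $T_F$-fixed point of $\M_{\cH_F}$ under $\M_{\wt{\cH_F}}\to\M_{\cH_F}$.

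Finally, I would verify the two remaining properties. Since $\wt{\cH_F}$ is simple, $\M_{\wt{\cH_F}}$ is rationally smooth of complex dimension $2\rk F$, and the central fiber of its affinization map is a Lagrangian subvariety of complex dimension $\rk F$; this is exactly half the complex codimension $2\rk F$ of $S_F$ in $\M_\cH$, yielding semismallness. For the homotopy type, I would invoke the standard $T_F$-equivariant deformation retraction of any rationally smooth hypertoric variety onto the central fiber of its affinization map, obtained by a Morse-theoretic flow along a generic Hamiltonian circle action (cf.\ \cite{BD,HS}); applied to $\M_{\wt{\cH_F}}\to\M_{\cH_F}$, this gives the required $T_F$-equivariant homotopy equivalence between the fiber of $\varpi$ and $\M_{\wt{\cH_F}}$.

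The main obstacle is arranging the local trivializations of $\M_\cH$ and $\M_{\wt\cH}$ compatibly with $\varpi$; this requires adapting the explicit normal-form construction of Proposition \ref{strata and slices} to the two hypertoric varieties simultaneously and checking that the trivializing sections can be chosen to intertwine the two GIT quotients under the projection $\H^I\to\H^F$.
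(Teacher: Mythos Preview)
The paper does not give its own proof of this proposition; it simply cites \cite[\S 2]{PW} and remarks that the argument there uses ``an alternative, algebro-geometric construction of hypertoric varieties.''  Your outline is precisely that algebro-geometric approach: construct both varieties as GIT quotients of $\Psi_\C^{-1}(V_0^\C)$ by $(T_{V_0})_\C$ at different stability parameters, obtain $\varpi$ by variation of GIT, and then analyze the fibers locally via the slice picture.  So you are on the same track as the reference the paper defers to, and the main steps (projectivity from VGIT, the dimension count for semismallness, the equivariant retraction of $\M_{\wt{\cH_F}}$ onto its core) are the right ones.

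One comment on the ``main obstacle'' you flag: the normal-form homeomorphism of Proposition~\ref{strata and slices} is only topological, whereas $\varpi$ is algebraic, so you should not literally try to make $\varpi$ a product in those coordinates.  What you actually need is softer: an \'etale or analytic slice (or simply the observation that $\varpi$ is $T$-equivariant and proper with $\varpi^{-1}(S_F)\to S_F$ a locally trivial bundle in the analytic topology) together with the identification of the fiber over a point of $S_F$ with the central fiber of $\M_{\wt{\cH_F}}\to\M_{\cH_F}$.  The latter identification comes directly from the GIT description under the coordinate projection $\H^I\to\H^F$, exactly as you say, and does not require intertwining the full topological trivializations of Proposition~\ref{strata and slices}.
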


\begin{remark}\label{core}
In particular, if $\cH$ is central, then the fiber of $\M_\tcH \to \M_\cH$ over the
point stratum of $\M_\cH$ is equivariantly homotopy equivalent to $\M_\tcH$ itself.
This subspace is called the {\bf core} of $\M_\tcH$, and is equivariantly homeomorphic
to a union of toric varieties, one for each bounded chamber of $\tcH$, glued together
along toric subvarieties \cite[6.5]{BD}.  If  
$\M_{\wt\cH}$ is smooth, i.e.\ $\cH$ is unimodular, it is the union of 
open subsets which are isomorphic to the cotangent bundles 
to these toric varieties.
\end{remark}

\begin{remark}
%Remark \ref{core} discussed a way to understand a smooth hypertoric variety
%in terms of a collection of toric varieties that sit inside of it.  
Alternatively, it is possible
to understand {\em} any hypertoric variety, smooth or not, 
in terms of a single toric variety inside of which it sits.
From the definition of $\M_\cH$ in Section \ref{Hypertoric basics}, 
one can see that $\M_\cH$ is a complete
intersection inside of the toric variety $$\Psi^{-1}(V\times \C^I)/T_{V_0} = \Psi_\R^{-1}(V)/T_{V_0}.$$
This toric variety is known as the {\bf Lawrence toric variety} associated to $\cH$.
When $\M_\cH$ is smooth or an orbifold, 
Hausel and Sturmfels \cite[\S 6]{HS} prove that the embedding of $\M_\cH$ into the Lawrence
toric variety induces an isomorphism on cohomology, and use that fact to compute the cohomology
ring of $\M_\cH$.  This method fails, however, when dealing with the more refined invariant of
intersection cohomology of a singular variety.
\end{remark}
\end{section}

\begin{section}{Minimal extension sheaves on the lattice of flats}\label{Lattice of flats}
Our first step toward the proof of Theorem \ref{main theorem 1}
is an algebraic study of the minimal extension sheaves
on the linear poset $L_\cH$.  The results of this section do not 
require the existence of a hypertoric variety, so we
do not need to assume that the arrangement is rational.
In fact, the results are true as stated for arrangements
over an arbitrary field of characteristic zero\footnote{The characteristic zero
hypothesis is needed for \cite[Prop. 7]{PS}, which we use to prove
Proposition \ref{psw}.}, but we will continue to work with
real arrangements so as to keep our notation from Section \ref{hyper} in place.

To simplify matters, we observe that for any flat $F$ of $\cH$, 
the closure $\ol{\{F\}}$ of $\{F\}$ in our topology on $L_\cH$ 
is isomorphic to the lattice of flats 
of the restricted arrangement $\cH^F$, and the restriction of $\cA_\cH$ to $\ol{\{F\}}$ is 
$\cA_{\cH^F} \otimes_{\Sym\langle F\rangle} \Sym V_0$.
It follows that the minimal
extension sheaf $\cL_F$ with support $\ol{\{F\}}$ is obtained by extension
of scalars from the minimal extension sheaf on $\cH^F$ whose support is
all of $L_{\cH^F}$.  Thus we can concentrate on describing $\cL_F$ in
the case of the flat $F = \emptyset$ whose closure is $L_\cH$.

\subsection{Simple arrangements}
Let $\tcH$ be a simplification of $\cH$ in the sense of Section \ref{arrangements};
in other words, the hyperplanes of $\tcH$ are generic translates of the hyperplanes of $\cH$.
Minimal extension sheaves on $L_\tcH$ are easy to understand, 
and this will help us to understand those on $L_\cH$.

There is a natural way besides the one that we have already described
in \S\ref{arrangements} to make the lattice $L_\cH$ into a linear poset: 
associate to the flat $F$ the vector space $\R^F$, with restrictions
given by the obvious quotient maps.  Let $\cA'_\cH$ be the resulting 
sheaf of rings on $L_\cH$.

\begin{lemma}\label{Aprime}
There is a natural homomorphism $\cA_\cH\to\cA'_\cH$ of sheaves of rings, which is an isomorphism
if and only if $\cH$ is simple.  In particular, we have an isomorphism
$\cA_\tcH\cong\cA'_\tcH$.
\end{lemma}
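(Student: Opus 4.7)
The plan is to build the morphism stalkwise using Proposition \ref{stalks are enough}, and then read off when it is an isomorphism from Remark \ref{simple}. For each flat $F$, the coordinate projection $\pi_F \colon \R^I \to \R^F$ restricts on $V_0 \subset \R^I$ to a linear map with kernel $V_0 \cap \R^{I\smallsetminus F} = \langle F\rangle$, and so factors through an injection $V(F) = V_0/\langle F\rangle \hookrightarrow \R^F$. Applying the $\Sym$ functor yields an injective homomorphism of graded polynomial rings
\[ \varphi_F \colon \cA_\cH(F) = \Sym V(F) \longrightarrow \Sym \R^F = \cA'_\cH(F). \]

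Next I would verify that the family $\{\varphi_F\}$ is compatible with the two sets of restriction maps. For $E \le F$, both sheaves have restrictions induced ultimately by the coordinate projection $\R^F \twoheadrightarrow \R^E$: for $\cA'_\cH$ directly, and for $\cA_\cH$ via the induced surjection $V(F) \twoheadrightarrow V(E)$. Since the maps $V(F) \hookrightarrow \R^F$ and $V(E) \hookrightarrow \R^E$ are each just the restriction of a coordinate projection on $V_0$, the resulting square of linear maps commutes; applying $\Sym$ gives the required compatibility. By Proposition \ref{stalks are enough}, the $\varphi_F$ assemble into a morphism of sheaves of graded rings $\cA_\cH \to \cA'_\cH$.

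Finally, a morphism of sheaves on a finite poset is an isomorphism exactly when every stalk map is, and $\varphi_F$ is an isomorphism iff the inclusion $V(F) \hookrightarrow \R^F$ is. By Remark \ref{simple}, this last condition holds for every flat $F$ simultaneously iff $\cH$ is simple, giving the biconditional in the lemma. The concluding statement is then immediate, since a simplification $\tcH$ is by construction simple.

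I do not anticipate any substantive obstacle here, as the lemma essentially unpacks the two definitions. The only point that requires a moment of care is the direction of the arrows: the linear maps $V(F) \hookrightarrow \R^F$ point \emph{opposite} to the restriction maps in the sheaves, so one must confirm that the stalk-level maps $\varphi_F$ and the sheaf-level restriction maps fit together into commuting squares with arrows in the expected directions.
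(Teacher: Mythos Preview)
Your proof is correct and follows essentially the same approach as the paper's own proof: both construct the morphism stalkwise from the natural inclusions $V(F)\hookrightarrow\R^F$, check compatibility with restrictions, and invoke Remark~\ref{simple} to characterize when it is an isomorphism. Your write-up is simply more explicit about the details (invoking Proposition~\ref{stalks are enough} and spelling out the commutative square), which is fine.
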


\begin{proof}
The natural inclusions $V(F) \to \R^F$ described in \S\ref{arrangements} 
are compatible with the maps $V(E) \to V(F)$ and $\R^E\to \R^F$ for $E\le F$,
so they induce a homomorphism $\cA_\cH \to \cA_\cH'$.
It will be an isomorphism if and only if each map 
$V(F) \to \R^F$ is an isomorphism, which is equivalent to the simplicity of $\cH$, 
by Remark \ref{simple}.
\end{proof}

The following lemma generalizes Example \ref{first}.

\begin{lemma}\label{Q-smooth MES}
The minimal extension sheaf $\mathcal L_\emptyset$ corresponding
to the maximal flat $\emptyset$ of $L_\tcH$ is isomorphic to the structure sheaf $\cA_\tcH$.
\end{lemma}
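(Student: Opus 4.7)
The plan is to verify that $\cA_\tcH$ satisfies the characterization of the minimal extension sheaf $\cL_\emptyset$ given by Proposition \ref{decomposition theorem}: that it is a pure, indecomposable $\cA_\tcH$-module with $\cA_\tcH(\emptyset)\cong\cA(\emptyset)$.  The vanishing condition $\cL_\emptyset(y)=0$ for $\emptyset\not\le y$ is vacuous, since $\emptyset$ is the minimum of $L_\tcH$, and the stalk identity is immediate from $\cA_\tcH(\emptyset)=\Sym V(\emptyset)=\R=\cA(\emptyset)$.

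For purity, I would first invoke Lemma \ref{Aprime} to replace $\cA_\tcH$ by $\cA'_\tcH$, whose stalks are $\Sym\R^F$.  The key geometric input is that in a simple arrangement every subset of a flat is itself a flat: the normals indexed by $F$ are linearly independent by simplicity, so the normals indexed by any $E\subs F$ are independent as well, whence $E$ is also a flat.  Consequently each interval $[\emptyset,F]$ in $L_\tcH$ is the Boolean lattice $2^F$, and the restriction of $\cA'_\tcH$ to $2^F$ is exactly the face ring of the full simplex on $F$.  Computing the projective limit over $\partial F$ then identifies $\cA'_\tcH(\partial F)$ with the face ring of the boundary of this simplex, namely $\R[x_i : i\in F]/(\prod_{i\in F} x_i)$, and the restriction $\cA'_\tcH(F)\to\cA'_\tcH(\partial F)$ is the evident surjective quotient.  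This yields flabbiness by the remark after Definition \ref{pure}, while pointwise freeness is automatic.

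For indecomposability, any decomposition $\cA_\tcH=\cM\oplus\cN$ as $\cA_\tcH$-modules corresponds to an idempotent sheaf endomorphism $e$, which acts at each stalk by multiplication by some $e_F\in\cA_\tcH(F)$ with $e_F^2=e_F$; since $\Sym V(F)$ is an integral domain, $e_F\in\{0,1\}$.  Because the restriction maps in the sheaf of rings $\cA_\tcH$ are themselves ring homomorphisms, these idempotents are compatible along every chain in $L_\tcH$ and are in particular determined globally by $e_\emptyset$ (every flat lies above $\emptyset$); hence one of $\cM$ or $\cN$ vanishes.  Proposition \ref{decomposition theorem} then produces the desired isomorphism $\cA_\tcH\cong\cL_\emptyset$.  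The only mildly subtle step is the flabbiness check, and it rests entirely on the Boolean structure of the intervals $[\emptyset,F]$ furnished by simplicity; as Example \ref{second} already illustrates, $\cA$ genuinely fails to be flabby once this structure is lost.
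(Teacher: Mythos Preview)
Your proof is correct and the flabbiness argument is essentially the paper's: pass to $\cA'_\tcH$ via Lemma~\ref{Aprime}, use simplicity to see that $[\emptyset,F]$ is Boolean, and identify $\cA'_\tcH(\partial F)$ with $\Sym\R^F/(\prod_{i\in F}e_i)$, from which surjectivity of $\partial_F$ is clear.

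The one place you diverge is in the indecomposability step.  Your idempotent argument is valid, but it is more work than needed.  The paper simply notes that once $\cA_\tcH$ is pure, Proposition~\ref{decomposition theorem} already finishes the job: for every $F$ one has $\ol{\cA_\tcH(F)}=\Sym V(F)/V(F)\cdot\Sym V(F)=\R$ in degree~$0$, and for $F\neq\emptyset$ the map $\ol{\partial_F}\colon\R\to\ol{\cA_\tcH(\partial F)}$ is the identity in degree~$0$ (since $1\mapsto 1$), hence injective; so $\ker\ol{\partial_F}=0$ for $F\neq\emptyset$ and the decomposition \eqref{decomposition} collapses to $\cA_\tcH\cong\cL_\emptyset$.  (The paper's mention of the kernel $(\prod_i e_i)\cA'_\tcH(F)$ records this implicitly.)  Your idempotent route has the mild advantage of not invoking the decomposition formula, but costs a paragraph; the paper's route is a one-line observation once purity is in hand.
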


\begin{proof}
By Definition \ref{pure} and Proposition \ref{decomposition theorem}, 
we need only show that $\cA_\tcH \cong \cA_\tcH'$ is flabby.  But 
for any $F \in L_\tcH$ the map $\cA_\tcH'(F) \to \cA_\tcH'(\bdy F)$ is 
surjective, with kernel $(\prod_{i\in F} \be_i)\cA_\tcH'(F)$.
\end{proof}

We can also give a nice description of the space of global sections of this sheaf, which,
by Lemma \ref{Q-smooth MES}, is naturally a ring.

\begin{lemma} \label{Sections gives face ring}
The ring of global sections $\cA(L_{\tcH})$ is canonically
isomorphic to the face ring $\R[\Delta_\tcH]$.
\end{lemma}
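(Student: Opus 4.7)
The plan is to write down the isomorphism explicitly on generators, then verify bijectivity via a ``support decomposition'' of compatible families. First, using Lemma \ref{Aprime} together with the simplicity of $\tcH$, I would identify $\cA_\tcH$ with $\cA'_\tcH$ so that each stalk $\cA(F)$ becomes the polynomial ring $\Sym\R^F$ and the restriction $\cA(F)\to\cA(E)$ for $E\le F$ sends $e_i$ to $e_i$ for $i\in E$ and to $0$ for $i\in F\smallsetminus E$; I would also use simplicity (see the discussion after the definition of the matroid complex) to identify the faces of $\Delta_\tcH$ with the flats in $L_\tcH$.

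Next, I would define $\phi\colon \R[\Delta_\tcH]\to\cA(L_\tcH)$ by sending the generator $e_i$ to the global section $\xi_i$ with $\xi_i(F)=e_i$ when $i\in F$ and $\xi_i(F)=0$ otherwise. Restriction-compatibility of $\xi_i$ is immediate from the formula for the restriction maps. The Stanley--Reisner relations $e_S=0$ for non-faces $S$ are automatically respected, because a non-face is a dependent subset of $I$ and hence is not contained in any flat (flats being independent sets in the simple case), so $\prod_{i\in S}\xi_i$ vanishes on every stalk of $\cA'_\tcH$.

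For bijectivity, the central observation is a canonical support decomposition of global sections: given $(s_F)\in\cA(L_\tcH)$, group the monomials of $s_F\in\Sym\R^F$ by their support $E\subseteq F$, writing $s_F=\sum_{E\subseteq F} t^F_E$ with $t^F_E$ the sum of monomials of $s_F$ of support exactly $E$. The compatibility $s_F|_E=s_E$ precisely kills the monomials of $s_F$ whose support is not contained in $E$, which forces the full-support piece $t^F_E$ to depend only on $E$; call this common value $t_E\in\Sym\R^E$. This sets up a bijection between global sections of $\cA'_\tcH$ and arbitrary collections $\{t_E\}_{E\in L_\tcH}$ of polynomials with prescribed full supports---exactly the monomial data describing $\R[\Delta_\tcH]$---and $\phi$ realizes this bijection on the nose.

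The main obstacle I anticipate is purely bookkeeping: setting up the support decomposition precisely and verifying that the assembly $\{t_E\}\mapsto(s_F)$ is a two-sided inverse of $\phi$ degree by degree. Canonicity of the resulting isomorphism is evident from the generator-level definition of $\phi$, since the $e_i$ are canonical elements on both sides.
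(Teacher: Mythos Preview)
Your proposal is correct and follows essentially the same approach as the paper: identify $\cA_\tcH$ with $\cA'_\tcH$ via Lemma~\ref{Aprime}, note that $L_\tcH = \Delta_\tcH$ by simplicity, and define the isomorphism by sending $e_i$ to the section which is $e_i$ on flats containing $i$ and zero elsewhere. The paper leaves the verification that this is an isomorphism as an ``easy exercise,'' whereas you carry it out explicitly via the support decomposition; your argument is a correct and natural way to do this exercise.
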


\begin{proof} 
Because $\tcH$ is simple, we have $\cA_\tcH\cong\cA_\tcH'$ and $L_\tcH = \D_\tcH$.
It is an easy exercise to check that the maps
$\R[\Delta_\tcH] \to \cA_\tcH'(F) = \Sym\R^F$ which send 
$\be_i$ to the $i$th standard basis vector if $i\in F$,
and to zero if $i\notin F$, induce an isomorphism
$\R[\Delta_\tcH] \cong \cA_\tcH'(L_\tcH)$ of graded rings.
\end{proof}

\vspace{-\baselineskip}
\subsection{The pushforward of the structure sheaf}
Recall from Section \ref{arrangements} that we have a natural map of posets 
$\pi\colon L_{\wt \cH} \to L_\cH$ with $\cA_\tcH\cong\pi^*\cA_\cH$.
It follows that
the pushforward $\pi_*$ takes $\cA_{\wt\cH}$-modules to
$\cA_{\cH}$-modules, and we may therefore define
the $\cA_\cH$-module $$\cE := \pi_*\cA_{\wt\cH}.$$

\begin{proposition}\label{E is pure} $\cE$ is a pure $\cA_\cH$-module.
\end{proposition}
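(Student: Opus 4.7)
The plan is to identify the stalk $\cE(F)$ explicitly as a face ring for each flat $F \in L_\cH$, and then to invoke the Cohen--Macaulay property of matroid complexes recorded in Section~\ref{mat}. The first step is to determine the open set $\pi^{-1}(U_F) \subseteq L_{\wt\cH}$. Since $\pi(\wt E)$ is by definition the closure of $\wt E$ in the matroid of $\cH$, the condition $\pi(\wt E) \subseteq F$ is equivalent to $\wt E \subseteq F$, so $\pi^{-1}(U_F)$ is the matroid complex $\Delta_{\cH_F}$ of the localization at $F$, viewed as a sub-simplicial complex of $L_{\wt\cH} = \Delta_{\wt\cH}$. Under this identification the restriction of $\cA_{\wt\cH}$ coincides with the structure sheaf $\cA_{\wt{\cH_F}}$ of the induced simplification of $\cH_F$, so Lemma~\ref{Sections gives face ring} (extended to sub-simplicial complexes by the same argument) yields
\[
 \cE(F) \;=\; \cA_{\wt\cH}\bigl(\pi^{-1}(U_F)\bigr) \;\cong\; \R[\Delta_{\cH_F}].
\]

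An entirely parallel computation identifies $\pi^{-1}(\bdy F)$ with the subcomplex $\bdy\Delta_{\cH_F}$ of $\Delta_{\cH_F}$ obtained by removing its maximal faces, i.e.\ the bases of $\cH_F$; this is a subcomplex precisely because matroid complexes are pure. One then obtains $\cE(\bdy F) \cong \R[\bdy\Delta_{\cH_F}]$, with the restriction $\bdy_F \colon \cE(F) \to \cE(\bdy F)$ corresponding to the evident quotient of face rings by the ideal generated by $\{e_S : S \text{ a basis of } \cH_F\}$. The flabbiness condition from Definition~\ref{pure} is then immediate, since this quotient is visibly surjective. For pointwise freeness, I would first check that the $\cA_\cH(F) = \Sym V(F)$-module structure on $\R[\Delta_{\cH_F}]$ induced by the adjunction $\cA_\cH \to \pi_*\pi^*\cA_\cH$ agrees with the natural structure coming from the composition $\Sym V(F) \hookrightarrow \Sym\R^F \twoheadrightarrow \R[\Delta_{\cH_F}]$; this is a direct stalk-by-stalk verification using the identification $V(\pi(\wt E)) = V(\wt E) \cong \R^{\wt E}$ for independent $\wt E$. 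Granting this, freeness is precisely the Cohen--Macaulay property of $\Delta_{\cH_F}$ over the parameter ring $\Sym V(F)$ --- which is the correct ring because $\cH_F$ is central, so its linearization equals $V(F)$ itself.

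There is no genuine obstacle here; the main work is bookkeeping rather than proving anything substantively new. The key insight is that pushing forward $\cA_{\wt\cH}$ along $\pi$ assembles, at the flat $F$, the global sections of the structure sheaf of the simplified localized arrangement $\wt{\cH_F}$, which by Lemma~\ref{Sections gives face ring} form the face ring $\R[\Delta_{\cH_F}]$. After this identification, purity of $\cE$ reduces to two classical properties of matroid face rings recalled in Section~\ref{mat}: that the quotient to a subcomplex is surjective, and that the face ring is free over the ambient polynomial ring.
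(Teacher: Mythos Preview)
Your proof is correct and follows essentially the same route as the paper for pointwise freeness: identify $\pi^{-1}(U_F)$ with $L_{\wt{\cH_F}} = \Delta_{\wt{\cH_F}} = \Delta_{\cH_F}$, apply Lemma~\ref{Sections gives face ring} to get $\cE(F) \cong \R[\Delta_{\cH_F}]$, and invoke the Cohen--Macaulay property over $\Sym V(F)$.

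For flabbiness, however, the paper takes a shorter path: since $\cA_{\wt\cH}$ is itself flabby (it is a minimal extension sheaf by Lemma~\ref{Q-smooth MES}), and the pushforward of a flabby sheaf is always flabby, there is nothing further to check. Your explicit computation of $\cE(\bdy F)$ as the face ring of the truncated complex is correct and is in fact carried out later in the paper (in the proof of Lemma~\ref{Combinatorially semi-small}), but it is unnecessary here.
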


\begin{proof} The pushforward of a flabby sheaf is always flabby,
so we need only show $\cE$ is pointwise free.  Let $F\in L_\cH$ be any flat,
and consider the stalk 
$\cE(F) = \cE(U_F) = \cA_{\wt\cH}(\pi^{-1}(U_F))$ of $\cE$ at $F$.
The linear poset $\pi^{-1}(U_F)$ is isomorphic to the linear poset
of the simplification $\wt{\cH_F}$ of $\cH_F$ induced by the simplification
$\tcH$ of $\cH$.  Thus, by
Lemma \ref{Sections gives face ring}, we have
$\cE(U_F) \cong \R[\Delta_{\wt{\cH_F}}]$, which is a free $\cA_\cH(F)$-module
by the discussion of matroid complexes in Section \ref{mat}.
% The complex
% $\Delta_{\wt{\cH_F}}$ is a matroid complex and is therefore Cohen-Macaulay, 
% and hence free as an $A_F$-module.
\end{proof}

%This identification of $\cE(U_F)$ with the face ring $\R[\Delta_{\wt{\cH_F}}]$,
%made in the proof of Proposition \ref{E is pure},
%allows us to prove the following key lemma.
\vspace{-\baselineskip}
\begin{lemma}
\label{Combinatorially semi-small} 
For any flat $F$ of $\cH$, we have the following three facts.
\begin{enumerate}
\item[(a)] $\cE(F,\bdy F)$ is a free $\cA_\cH(F)$-module generated in
degree $2\rank F$, with a natural basis in one-to-one
correspondence with $\pi^{-1}(F)$.
\item[(b)] The map % $\ol{\bdy_F}\colon
$\ol{\cE(F, \bdy F)} \to \ol{\cE(F)}$
is surjective in degree $2\rank F$ (the top nonzero degree).
\item[(c)] There is an isomorphism 
$\cE\cong \bigoplus_{F\in L_\cH} \ol{\cE(F)}_{2\rank F} \otimes_k
\cL_F$, where we consider $\ol{\cE(F)}_{2\rank F}$ as a graded 
vector space concentrated in degree $2\rank F$.
\end{enumerate}
\end{lemma}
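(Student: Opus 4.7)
My plan is to translate each claim into a statement about face rings of matroid complexes and then apply standard Cohen-Macaulay machinery. First I will identify the relevant objects: by Lemma \ref{Sections gives face ring}, applied to the simplification $\wt{\cH_F}$ of $\cH_F$, the stalk $\cE(F) = \cA_\tcH(\pi^{-1}(U_F))$ is isomorphic to the face ring $\R[\Delta]$ of $\Delta := \Delta_{\wt{\cH_F}}$; the preimage $\pi^{-1}(F)$ consists of exactly the $\rk F$-element independent sets, i.e.\ the bases of the matroid on $F$, while $\pi^{-1}(\bdy F)$ is the subcomplex $\Delta' := \Delta^{<\rk F}$ obtained by deleting these top-dimensional faces. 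Under these identifications, the restriction $\cE(F) \to \cE(\bdy F) \cong \R[\Delta']$ becomes the natural restriction of face rings, and hence $\cE(F,\bdy F)$ is the ideal $J = (e_S : S \in \pi^{-1}(F)) \subseteq \R[\Delta]$ generated by the basis monomials, each of degree $2\rk F$.

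For part (a), the key claim I need is that $J$ is free as a $\cA_\cH(F) = \Sym V(F)$-module. Since $\R[\Delta]$ is Cohen-Macaulay of Krull dimension $d := \rk F$ and skeleta of CM complexes remain CM (so $\R[\Delta']$ is CM of dimension $d - 1$), the depth lemma applied to
\[0 \to J \to \R[\Delta] \to \R[\Delta'] \to 0\]
will give $\operatorname{depth}(J) = d$, whence $J$ is itself CM of dimension $d$ and therefore free over $\Sym V(F)$. A Hilbert series computation, using the identity
\[h_\Delta(q) - (1-q)\, h_{\Delta'}(q) = f_d(\Delta)\, q^d = |\pi^{-1}(F)|\, q^d,\]
which is immediate from the defining sum for the $h$-polynomial and the fact that $\Delta'$ has no $d$-element faces, will then yield $\operatorname{Hilb}(J) = |\pi^{-1}(F)| \cdot q^{2d}/(1-q^2)^d$. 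Since $J$ comes with $|\pi^{-1}(F)|$ obvious generators in degree $2d$, namely $\{e_S\}_{S\in\pi^{-1}(F)}$, Nakayama will force these to be a free basis.

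For part (b), it suffices to show $\ol{\cE(\bdy F)}_{2d} = 0$, since the Tor long exact sequence attached to the SES above then yields the desired surjection. Because $\R[\Delta']$ is CM of Krull dimension $d-1$ while $V(F)$ has dimension $d$, I plan to use a prime-avoidance argument---exploiting that for every facet $T$ of $\Delta'$ the projection $V(F) \to \R^T$ is surjective (as $T$ is independent)---to produce a $(d-1)$-dimensional subspace $V' \subset V(F)$ which is an LSOP for $\R[\Delta']$. Then $\R[\Delta']/V'\R[\Delta']$ has Hilbert series $h_{\Delta'}(q^2)$, concentrated in degrees $\le 2(d-1) < 2d$, and the further quotient by the remaining one-dimensional piece of $V(F)$ can only lower degrees; hence $\ol{\cE(\bdy F)}_{2d} = 0$ as required.

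Finally, (c) will fall out formally: by (a) the space $\ol{\cE(F,\bdy F)}$ is concentrated in degree $2d$; by (b) its image in $\ol{\cE(F)}$ is all of $\ol{\cE(F)}_{2d}$; and by exactness this image coincides with $\ker\ol{\bdy_F}$, giving $\ker\ol{\bdy_F} = \ol{\cE(F)}_{2d}$. Proposition \ref{decomposition theorem} then supplies the claimed direct-sum decomposition. The main obstacle I anticipate is the freeness statement in (a): the depth-lemma argument is clean, but it relies on the classical fact that skeleta of Cohen-Macaulay simplicial complexes are Cohen-Macaulay, and on a careful bookkeeping of $h$-polynomials to match ranks and degrees precisely.
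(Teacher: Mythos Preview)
Your proof is correct, but it takes a noticeably different route from the paper's, so a brief comparison is worthwhile.

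For part (a), the paper's argument is elementary and direct: since $\cE(F)\to\cE(\bdy F)$ is the restriction of face rings $\R[\Delta]\to\R[\Delta^\circ]$, the kernel is spanned by monomials whose support is a top-dimensional face $E$, and hence splits as $\bigoplus_{E\in\pi^{-1}(F)} e_E\cdot\Sym\R^E$. Because each such $E$ is a basis of the matroid on $F$, the map $V(F)\to\R^E$ is an isomorphism, so each summand is a free rank-one $\cA_\cH(F)$-module generated by $e_E$. Your depth-lemma argument reaches the same conclusion but invokes considerably more machinery (Cohen--Macaulayness of skeleta, the depth lemma, an $h$-polynomial identity, and Nakayama), and the ``natural basis'' statement only emerges at the end after a rank count.

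For part (b), the paper uses shellability: a shelling of $\Delta$ produces an explicit free $\cA_\cH(F)$-basis $\{e_{S_k}\}$ for $\R[\Delta]$, and the basis elements of degree $2\rk F$ are precisely those with $S_k=E_k$, hence are among the $e_E$ from part (a). This immediately gives the surjectivity. Your alternative---showing $\ol{\cE(\bdy F)}_{2d}=0$ by constructing a $(d-1)$-dimensional linear system of parameters $V'\subset V(F)$ for $\R[\Delta']$---is also valid, and has the mild advantage of not invoking an explicit shelling; but it again leans on the skeleton Cohen--Macaulay fact and a prime-avoidance step.

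For (c) the two arguments are essentially identical. In summary, the paper's proof is shorter and more combinatorial (monomial support and shellings), while yours is more homological; your approach would transfer more readily to situations where an explicit shelling is unavailable, at the cost of importing more commutative-algebra background.
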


\begin{proof} 
Let $\Delta = \Delta_{\wt{\cH_F}}$ and let
$\Delta^\circ = \Delta_{<\rk F}$ be the subcomplex of $\Delta$ with 
all top-dimensional simplices removed.  We have already 
seen in the proof of Proposition \ref{E is pure} that 
$\cE(F) = \R[\Delta]$.  Since the map $\pi$ preserves rank,
we have $\pi^{-1}(\bdy F) = \{E \in \pi^{-1}(U_F) \mid \rk E < \rk F\}$.
Thus we have $\cE(\bdy F) = \R[\Delta^\circ]$, and the map
$\cE(F) \to \cE(\bdy F)$ is given by the natural restriction 
of face rings.
It follows that $\cE(F,\bdy F)$ has a basis consisting of elements
$e_E = \prod_{i\in E} e_i$ for $E \in \Delta\setminus \Delta^\circ=\pi^{-1}(F)$.
This proves Statement (a).

Next, 
take a shelling order $E_1,\dots,E_r$ of the top-dimensional
simplices of $\Delta$.  Then $\cE(F) = \R[\Delta]$ 
has an $\cA_\cH(F)$-module basis
$e^{}_{S_1}, \dots, e^{}_{S_r}$, where 
\[S_k := \{i\in E_k \mid E_k \setminus \{i\} \subset E_j\; \text{for some $j < k$}\}.\]  
The degree $2\rk F$ elements of this basis are a subset of
our basis for $\cE(F,\bdy F)$, which implies Statement (b).

Last, reduce the short exact sequence
\[0 \to \cE(F,\bdy F) \to \cE(F) \to \cE(\bdy F) \to 0\]
to obtain a right exact sequence
\[\ol{\cE(F,\bdy F)} \to \ol{\cE(F)} \stackrel{\ol{\bdy_F}}\to
\ol{\cE(\bdy F)} \to 0.\]
Statements (a) and (b) imply that $\ker \ol{\bdy_F}$ 
is isomorphic to $\ol{\cE(F)}_{2\rank F}$. 
This fact, along with Propositions \ref{decomposition theorem} and \ref{E is pure}, 
gives us Statement (c).
\end{proof}

\vspace{-\baselineskip}
\begin{remark}
If the arrangement $\cH$ is rational, this decomposition can be
deduced from the geometric decomposition theorem of \cite[6.2.5]{BBD}
along with the 
fact that the map $\varpi\colon\M_{\tcH} \to \M_{\cH}$ is
semismall, as in the toric case (see Remark \ref{decomp theorem remark}).  
As we will see in Proposition \ref{pushforward prop} 
below, $\cE$ is the localization of the pushforward 
$\varpi_*\R_{\M_{\tcH},T}$.  The decomposition theorem says that this
splits into a direct sum of intersection cohomology sheaves, whose
localizations are the various minimal extension sheaves $\cL_F$.
The fact that each sheaf $\cL_F$ can only appear in the decomposition
in a single degree $2\rank F$ follows from the semismallness of 
$\varpi$.

Semi-smallness also implies that the decomposition of 
$\varpi_*\R_{\M_{\tcH},T}$ into IC sheaves is unique.  In contrast,
the isomorphism of Lemma \ref{Combinatorially semi-small}(c)
is not canonical, since it involves choosing
representatives for $\ol{\cE(F)}$ in $\cE(F)$.  The summand
$\cL_\emptyset$, however, {\em does} sit inside $\cE$ canonically.
\end{remark}
% \begin{corollary} Let $\cL = \cL_\emptyset$.  Then
%   \begin{enumerate}
%  \item For any flat $F \ne \emptyset$, the stalk $\cL(F)$ is generated in 
% degrees $< \rank F$, while the costalk $\cL(F,\bdy F)$ is generated in 
% degree $\rank F$, and 
% \item The sheaf $\cL$ is rigid. 
%   \end{enumerate}
% \end{corollary}
% The first statement follows immediately from the Lemma, since $\cL$ is a summand of
% $\cE$.  The second then follows from Lemma \ref{rigidity criterion}.
\begin{corollary}\label{rigid MES}
The minimal extension sheaf $\cL_\emptyset$ is rigid.
\end{corollary}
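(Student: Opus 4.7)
The plan is to verify the rigidity criterion of Lemma \ref{rigidity criterion}. For each flat $F$ I will produce a degree $d_F$ so that $\cL_\emptyset(F)$ is generated in degrees $\le d_F$ and $\cL_\emptyset(F,\bdy F)$ in degrees $> d_F$. The proposal is $d_F := 2\rank F - 1$ for $F \ne \emptyset$; the case $F = \emptyset$ is automatic, because $\cL_\emptyset(\emptyset) = \R$ forces any graded automorphism of $\cL_\emptyset$ to act as a single scalar at the minimum, which then propagates upward via the argument for $F > \emptyset$. The central tool is Lemma \ref{Combinatorially semi-small}(c), which realizes $\cL_\emptyset$ as a canonical direct summand of $\cE = \pi_*\cA_\tcH$ (the $G = \emptyset$ term, with multiplicity $\ol{\cE(\emptyset)}_0 = \R$ in degree zero).

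The condition that $\cL_\emptyset(F,\bdy F)$ is generated in degrees $> d_F$ is the easy half: $\cL_\emptyset(F,\bdy F)$ is a direct summand of $\cE(F,\bdy F)$, which by Lemma \ref{Combinatorially semi-small}(a) is free over $\cA_\cH(F)$ with generators concentrated in degree $2\rank F$.

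The condition that $\cL_\emptyset(F)$ is generated in degrees $\le d_F$ is the harder half. Since $\cL_\emptyset(F)$ is a summand of $\cE(F) \cong \R[\Delta_{\wt{\cH_F}}]$, whose generators as an $\cA_\cH(F)$-module lie in even degrees $\le 2\rank F$, it suffices to prove that $\ol{\cL_\emptyset(F)}_{2\rank F} = 0$, i.e.\ that $\cL_\emptyset(F)$ has no generators in the very top degree. For this I invoke Lemma \ref{Combinatorially semi-small}(b): the map $\ol{\cE(F,\bdy F)} \to \ol{\cE(F)}$ is surjective in degree $2\rank F$. Because the decomposition (c) is compatible both with the kernel construction $M \mapsto M(F,\bdy F)$ and with the reduction $M \mapsto \ol M$, this map decomposes as a direct sum of summand-wise maps, and hence must be surjective on each summand. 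On the $G = \emptyset$ summand, however, the induced map $\ol{\cL_\emptyset(F,\bdy F)} \to \ol{\cL_\emptyset(F)}$ is identically zero: indeed $\cL_\emptyset(F,\bdy F) \subset V(F)\cdot\cL_\emptyset(F)$, which follows from $\ol{\bdy_F}$ being an isomorphism on $\cL_\emptyset$ at every $F > \emptyset$ (by flabbiness and indecomposability, as noted after Proposition \ref{decomposition theorem}). Surjectivity combined with the vanishing map forces $\R \otimes \ol{\cL_\emptyset(F)}_{2\rank F} = 0$, as required.

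The main obstacle is the diagrammatic step in the previous paragraph: one must check carefully that the decomposition in Lemma \ref{Combinatorially semi-small}(c) is compatible with the kernel and reduction functors, so that the global surjectivity statement (b) transports cleanly to a statement on the $G = \emptyset$ summand alone. Beyond this bookkeeping, no new inputs are needed beyond the results already established in Section \ref{Lattice of flats}.
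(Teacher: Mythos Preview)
Your proof is correct and follows essentially the same strategy as the paper: verify Lemma \ref{rigidity criterion} with $d_F = 2\rank F - 1$, use Lemma \ref{Combinatorially semi-small}(a) for the costalk bound, and use the decomposition (c) to show $\ol{\cL_\emptyset(F)}_{2\rank F} = 0$. The only difference is in that last step: you combine the surjectivity from (b) with the vanishing of the $\emptyset$-summand map, whereas the paper argues by a direct dimension count---the $\cL_F$ summand already contributes $\ol{\cE(F)}_{2\rk F}\otimes \ol{\cL_F(F)}_0 \cong \ol{\cE(F)}_{2\rk F}$ in degree $2\rk F$, leaving no room for $\ol{\cL_\emptyset(F)}_{2\rk F}$---which sidesteps the compatibility bookkeeping you flag.
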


\begin{proof}
Lemma \ref{Combinatorially semi-small} tells us that for any flat $F
\ne \emptyset$, the costalk $\cE(F,\bdy F)$ is generated in degree
$2\rank F$, thus so is its summand $\cL_\emptyset(F,\bdy F)$.  The corollary
will follow from Lemma \ref{rigidity criterion} if we can show 
that $\cL_\emptyset(F)$ is generated in degrees strictly less 
than $2\rank F$.

This can be deduced from Corollary \ref{Betti number theorem} below,
but we can also give a simple direct proof.
We observed in Section \ref{mat} that the face ring
$\R[\Delta_{\wt{\cH_F}}]$ is generated as an $\cA(F)$-module in degrees less
than or equal to $2\rank \cH_F = 2\rank F$.  
Since $\cL_\emptyset(F)$ is a summand of
$\cE(F)\cong\R[\Delta_{\wt{\cH_F}}]$, it is also generated in degrees
at most $2\rk F$.  In fact, we can do one better: Lemma
\ref{Combinatorially semi-small}(c) tells us that
$$\cE \cong \cL_\emptyset \oplus \ol{\cE(F)}_{2\rk F}\otimes_k\cL_F\oplus\text{other terms}.$$
Passing to the stalk at $F$, reducing, and taking the part of degree
$2 \rk F$, we obtain the equation
$$\ol{\cE(F)}_{2\rk F} \cong \ol{\cL_\emptyset(F)}_{2\rk F} \oplus\ol{\cE(F)}_{2\rk F}
\otimes_k\ol{\cL_F(F)}_0\oplus\text{other terms}.$$
Since $\ol{\cL_F(F)}_0\cong\R$, we must have
$\ol{\cL_\emptyset(F)}_{2\rk F}=0$, proving our statement.
\end{proof}

\vspace{-\baselineskip}
\subsection{The sheaves \boldmath$\cR^{bc}$ and \boldmath$\cR$}\label{defining R}
In this section we construct two explicit models of the minimal extension
sheaf $\cL$ on $L_\cH$.  As an application we compute the Betti numbers
of the stalk of $\cL$ at every flat, a necessary step toward the proof
of Theorem \ref{main theorem 1}.
% In order to prove Theorem \ref{main theorem 1}, which gives the 
% relation between the intersection cohomology of $\M_\cH$ and the minimal
% extension sheaf $\cL$ on $L_\cH$, we first prove 
% that the stalks of $\cL$ have the right Betti numbers.
% % By Lemma \ref{}, it is enough to consider the case of a central 
% % arrangment.

Fix an ordering of the indexing set $I$, 
and define a sheaf of rings $\cR^{bc}$ on $L_\cH$ by letting
$$\cR^{bc}(F) := \R[\Delta^{bc}_{\cH_F}]$$ for any flat $F\in L_\cH$,
and letting the map $\cR^{bc}(F) \to \cR^{bc}(E)$ be the restriction 
induced by the inclusion $\Delta^{bc}_{\cH_E} \subset \Delta^{bc}_{\cH_F}$ for 
$E\le F$.  (Note that in fact $\Delta^{bc}_{\cH_E}$ consists of all
simplices of $\Delta^{bc}_{\cH_F}$ which are contained in $E$.)  There is a unique
sheaf with these properties by Proposition \ref{stalks are enough}.

This sheaf $\cR^{bc}$ is a quotient (as a sheaf of rings) of the sheaf
$\cA'$ defined in the proof of Lemma \ref{Sections gives face ring}, since
$\R[\Delta^{bc}_{\cH_F}]$ is a quotient of the polynomial ring $\Sym\R^F$. 
The map $\cA \to \cA'$ makes $\cR^{bc}$ into an algebra over $\cA$,
and in particular into an $\cA$-module.

\begin{lemma}\label{sections over U}
For any open set $U$,
$\cR^{bc}(U)$ is isomorphic to the face ring of 
the complex $\bigcup_{F\in U} \Delta^{bc}_{\cH_F}$.
\end{lemma}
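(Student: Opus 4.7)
The plan is to reduce the lemma to a degreewise vector space isomorphism by a standard monomial basis argument, with the one subtle point being that the broken circuit complexes associated with incomparable flats agree on their common intersection.

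First, I would apply Proposition \ref{stalks are enough} to rewrite
\[\cR^{bc}(U) = \ilim_{F \in U} \R[\Delta^{bc}_{\cH_F}],\]
where the transition maps for $E \le F$ in $U$ are the face-ring restriction homomorphisms dual to the inclusions $\Delta^{bc}_{\cH_E} \subset \Delta^{bc}_{\cH_F}$ (as given by the parenthetical in the definition of $\cR^{bc}$). Setting $\Delta_U := \bigcup_{F\in U}\Delta^{bc}_{\cH_F}$, I would then construct a natural homomorphism $\R[\Delta_U] \to \cR^{bc}(U)$: for each $F \in U$, the inclusion $\Delta^{bc}_{\cH_F} \hookrightarrow \Delta_U$ gives a restriction $\R[\Delta_U] \to \R[\Delta^{bc}_{\cH_F}]$, and these are compatible along comparable pairs by functoriality of face-ring restriction along chains of subcomplexes.

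The bulk of the proof is to check this map is an isomorphism in each graded degree (note that each $\R[\Delta_U]_{2k}$ and each $\R[\Delta^{bc}_{\cH_F}]_{2k}$ is finite dimensional since $I$ is finite). I would use the standard monomial basis: a basis of $\R[\Delta]$ consists of monomials $e^\alpha = \prod_i e_i^{\alpha_i}$ whose support $S_\alpha := \{i : \alpha_i > 0\}$ is a face of $\Delta$, and each such monomial restricts to $e^\alpha$ in $\R[\Delta^{bc}_{\cH_F}]$ when $S_\alpha \in \Delta^{bc}_{\cH_F}$ and to $0$ otherwise. Injectivity is then immediate: every face of $\Delta_U$ belongs by definition to $\Delta^{bc}_{\cH_F}$ for some $F \in U$, so if a linear combination of basis monomials maps to zero in every component, linear independence in each $\R[\Delta^{bc}_{\cH_F}]$ forces all coefficients to vanish.

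For surjectivity, given a compatible family $(x_F)_{F\in U}$, expand each $x_F$ in the standard basis as $x_F = \sum_\alpha c_{F,\alpha} e^\alpha$, and attempt to define a preimage by assigning to each basis monomial $e^\alpha$ of $\R[\Delta_U]$ the coefficient $c_{F,\alpha}$ for any $F \in U$ with $S_\alpha \in \Delta^{bc}_{\cH_F}$. The only real subtlety, and the main obstacle in the proof, is well-definedness of this assignment. Here I would invoke two facts: (i) the intersection of any two flats of $\cH$ is again a flat (a standard property of the lattice of flats of a matroid), so for $F, F' \in U$ the flat $F\cap F'$ lies in $U$ by down-closedness; and (ii) applying the parenthetical identification $\Delta^{bc}_{\cH_{F\cap F'}} = \{S \in \Delta^{bc}_{\cH_F} \mid S \subset F\cap F'\}$ to both $F$ and $F'$, we get that for $S_\alpha \subset F \cap F'$, membership in $\Delta^{bc}_{\cH_F}$, $\Delta^{bc}_{\cH_{F\cap F'}}$, and $\Delta^{bc}_{\cH_{F'}}$ are all equivalent. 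Compatibility along the comparable pairs $F\cap F' \le F$ and $F \cap F' \le F'$ then gives $c_{F,\alpha} = c_{F\cap F',\alpha} = c_{F',\alpha}$, so the assigned coefficient is unambiguous. A final degreewise count (or verification that the assembled sum lies in the correct image) finishes surjectivity.
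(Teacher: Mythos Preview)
Your proof is correct and follows essentially the same route as the paper's: both set up the map from the face ring of $\Delta_U$ into the inverse limit, use the standard monomial basis, and resolve the only nontrivial point (consistency of coefficients across incomparable flats) by passing to the flat $F\cap F'\in U$ and invoking the identification of $\Delta^{bc}_{\cH_{F\cap F'}}$ with the simplices of $\Delta^{bc}_{\cH_F}$ contained in $F\cap F'$. Your version is a bit more explicit in separating injectivity from surjectivity, but the substance is identical.
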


\begin{proof}
Recall from Proposition \ref{stalks are enough} that
$$\cR^{bc}(U) = \ilim_{F\in U} \cR^{bc}(F).$$
It is clear that the face ring of $\bigcup_{F\in U} \Delta^{bc}_{\cH_F}$ maps surjectively
to each $\cR^{bc}(F)$, and therefore to the inverse limit.  
To see that this map is an isomorphism, note that
$\cR^{bc}(F)$ has a vector space basis consisting of 
monomials in the $e_i$ whose support $S$ is contained in $F$ and 
contains no broken circuit.  We call such a set $S$ {\em allowable}
for $F$.  Suppose a monomial has support $S$ which is allowable for
two flats $F_1$ and $F_2$.  Then for any section
of $\cR^{bc}$ on a set $U$ containing both flats 
the coefficients of this monomial on $F_1$ and $F_2$
must be the same, since $S$ is allowable for $F_1 \cap F_2 \in U$.
\end{proof}

\vspace{-\baselineskip}
\begin{proposition}\label{broken circuit MES}
The sheaf $\cR^{bc}$ is a minimal extension sheaf.
\end{proposition}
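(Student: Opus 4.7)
My plan is to verify that $\cR^{bc}$ is a pure $\cA_\cH$-module, apply Proposition \ref{decomposition theorem} to decompose it as a direct sum of minimal extension sheaves, and then use a degree argument to force only the summand $\cL_\emptyset$ to appear (with multiplicity one in degree zero). The key combinatorial input will be the strict inequality $\deg h^{bc}_\cH<\rk\cH$ recalled in Section \ref{mat}.

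Pointwise freeness is immediate: for any flat $F$ the stalk $\cR^{bc}(F)=\R[\Delta^{bc}_{\cH_F}]$ is free over $\cA(F)=\Sym V(F)$ by the Cohen--Macaulay property of broken circuit complexes applied to $\cH_F$. For flabbiness I would show that $\bdy_F\colon\cR^{bc}(F)\to\cR^{bc}(\bdy F)$ is surjective for every $F$. By Lemma \ref{sections over U} the target is the face ring of $\bigcup_{E<F}\Delta^{bc}_{\cH_E}$. A face $S$ of $\Delta^{bc}_{\cH_F}$ belongs to $\Delta^{bc}_{\cH_E}$ for some $E<F$ precisely when $|S|<\rk F$: if so one may take $E$ to be the smallest flat containing $S$ (every broken circuit of $\cH_E$ is already a broken circuit of $\cH_F$), while conversely a face of size $\rk F$ already spans $F$ and fits in no smaller flat. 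Thus the union is the subcomplex of $\Delta^{bc}_{\cH_F}$ obtained by discarding the top-dimensional faces, and the induced restriction of face rings sends each generator $e_i$ to the corresponding generator of the target or to zero, hence is surjective.

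At this point $\cR^{bc}$ is pure and Proposition \ref{decomposition theorem} gives
\[\cR^{bc}\cong \bigoplus_{F\in L_\cH}\ker\ol{\bdy_F}\otimes_\R\cL_F.\]
For $F=\emptyset$ one has $\cR^{bc}(\emptyset)=\R$ and $\cR^{bc}(\bdy\emptyset)=0$, so $\ker\ol{\bdy_\emptyset}=\R$ concentrated in degree zero. The remaining and main task is to show $\ker\ol{\bdy_F}=0$ for every $F\ne\emptyset$. For such an $F$ I would apply the right-exact functor $\ol{(\cdot)}=(\cdot)\otimes_{\cA(F)}\R$ to the short exact sequence
\[0\to\cR^{bc}(F,\bdy F)\to\cR^{bc}(F)\to\cR^{bc}(\bdy F)\to 0,\]
which is exact on the right by the flabbiness just established; this identifies $\ker\ol{\bdy_F}$ with the image of $\ol{\cR^{bc}(F,\bdy F)}$ in $\ol{\cR^{bc}(F)}$. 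The ideal $\cR^{bc}(F,\bdy F)$ is generated by the degree $2\rk F$ monomials $e_S$ for $S$ a top-dimensional face of $\Delta^{bc}_{\cH_F}$, so both it and its quotient $\ol{\cR^{bc}(F,\bdy F)}$ live in degrees $\ge 2\rk F$. Meanwhile $\ol{\cR^{bc}(F)}$ has Hilbert series $h^{bc}_{\cH_F}(q)$ with $q$ placed in degree two, and the strict inequality $\deg h^{bc}_{\cH_F}<\rk F$ forces $\ol{\cR^{bc}(F)}$ to vanish in all degrees $\ge 2\rk F$; the image in question is therefore zero.

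The crux of the argument is the degree comparison in the last paragraph. The strict inequality $\deg h^{bc}<\rk$ for broken circuit complexes is precisely what rules out extra summands $\cL_F$ and distinguishes $\cR^{bc}$ from the analogous pushforward $\cE=\pi_*\cA_{\wt\cH}$ of Lemma \ref{Combinatorially semi-small}, which is built from the full matroid complex (where only the weak inequality $\deg h\le\rk$ holds) and consequently does split off a copy of $\cL_F$ in degree $2\rk F$ for every flat.
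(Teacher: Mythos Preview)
Your proof is correct and follows essentially the same approach as the paper's: pointwise freeness from the Cohen--Macaulay property, flabbiness via Lemma \ref{sections over U} together with the identification of $\bigcup_{E<F}\Delta^{bc}_{\cH_E}$ as the $(\rk F-2)$-skeleton of $\Delta^{bc}_{\cH_F}$, and indecomposability by the degree comparison $\deg h^{bc}_{\cH_F}<\rk F$ forcing the image of $\ol{\cR^{bc}(F,\bdy F)}$ in $\ol{\cR^{bc}(F)}$ to vanish. Your writeup simply spells out a few steps (the skeleton identification, the $F=\emptyset$ case) that the paper leaves implicit.
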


\begin{proof}
The fact that $\cR^{bc}$ is pointwise free follows from the fact that
broken circuit complexes are Cohen-Macaulay, and flabbiness is a consequence
of Lemma \ref{sections over U}.
Thus we only need to show that $\cR^{bc}$ is indecomposable as an $\cA$-module.  
By Proposition \ref{decomposition theorem}, it is enough to show that 
for any $F \ne \emptyset$, the map
$\ol{\cR^{bc}(F, \bdy F)} \to \ol{\cR^{bc}(F)}$
is zero.
% As before, we can restrict to the case $F = I$. 
The target $\ol{\cR^{bc}(F)}$ is zero in degrees $\ge 2\rank F$, since
$\deg h_{\cH_F}^{bc}(q) < \rank \cH_F = \rk F$.  On the other hand,
$\ol{\cR^{bc}(F, \bdy F)}$ is concentrated in degree $2\rank F$, 
since $\displaystyle\bigcup_{E < F} \!\!\Delta^{bc}_{\cH_E}$ is the 
$(\rank F - 2)$-skeleton of $\Delta^{bc}_{\cH_F}$.  Thus the map vanishes for degree reasons.
\end{proof}

\vspace{-\baselineskip}
\begin{corollary} \label{Betti number theorem}
For any arrangement $\cH$ and any flat
$F\in L_\cH$, the Hilbert series of the reduction of the stalk of $\cL_\emptyset$
at $F$ coincides with the $h$-polynomial of the broken circuit complex
of the localized arrangement $\cH_F$ with degrees doubled:
\[\Hilb(\ol{\cL_\emptyset(F)}, t) = h_{\cH_F}^{bc}(t^2).\]
\end{corollary}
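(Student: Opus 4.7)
The plan is to identify the sheaf $\cR^{bc}$ constructed in this section with the minimal extension sheaf $\cL_\emptyset$, after which the Hilbert series computation is immediate from the definition of the broken circuit $h$-polynomial.

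First, Proposition \ref{broken circuit MES} already asserts that $\cR^{bc}$ is a minimal extension sheaf, i.e.\ a pure indecomposable $\cA$-module. By the classification in Proposition \ref{decomposition theorem}, this means $\cR^{bc} \cong \cL_G$ for a unique flat $G$, characterized by the vanishing $\cL_G(y) = 0$ whenever $G \not\le y$. To pin down $G$, I would examine the stalk at the minimum element $\emptyset \in L_\cH$: the localization $\cH_\emptyset$ has no hyperplanes, so $\Delta^{bc}_{\cH_\emptyset}$ consists of the empty face alone and $\cR^{bc}(\emptyset) = \R \ne 0$. Since $\cL_G(\emptyset)$ vanishes for any flat $G > \emptyset$, this forces $G = \emptyset$, so $\cR^{bc} \cong \cL_\emptyset$.

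With that identification in hand, the stalk at any flat $F$ is $\cL_\emptyset(F) \cong \R[\Delta^{bc}_{\cH_F}]$, where the action of $\cA(F) = \Sym V(F)$ is induced by the ring homomorphism $\cA \to \cA'$ of Lemma \ref{Aprime} followed by the surjection onto $\R[\Delta^{bc}_{\cH_F}]$. Reducing therefore yields
\[\ol{\cL_\emptyset(F)} \;\cong\; \R[\Delta^{bc}_{\cH_F}] \bigmod V(F) \cdot \R[\Delta^{bc}_{\cH_F}].\]
By the discussion in Section \ref{mat}, $\Delta^{bc}_{\cH_F}$ is Cohen-Macaulay with parameter subspace $V(F) \subset \R^F$, and its $h$-polynomial is by definition the Hilbert series of this quotient with degrees reduced by half. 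Since our convention places the generators $e_i$ in degree two, the undoubled Hilbert series is obtained by substituting $t \mapsto t^2$, giving $\Hilb(\ol{\cL_\emptyset(F)}, t) = h_{\cH_F}^{bc}(t^2)$, as required.

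I do not expect any essential obstacle here: the substance of the corollary is the identification $\cR^{bc} \cong \cL_\emptyset$, which has been prepared by Proposition \ref{broken circuit MES}, and the remainder is a direct unpacking of the definition of the $h$-polynomial.
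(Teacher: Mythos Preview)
Your proof is correct and follows exactly the argument the paper intends: the corollary is stated immediately after Proposition \ref{broken circuit MES} with no separate proof, because the identification $\cR^{bc}\cong\cL_\emptyset$ (via Proposition \ref{decomposition theorem} and the observation $\cR^{bc}(\emptyset)=\R$) together with the definition of the $h$-polynomial is all that is needed. Your write-up simply makes this implicit reasoning explicit.
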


We have now constructed an explicit minimal extension sheaf $\cR^{bc}$ on $\cL$.
There is, however, something unsatisfying about this construction.
Our definition of the sheaf $\cR^{bc}$ depends on the notion of a broken circuit,
which in turn depends on an ordering of the indexing set $I$.
We would rather have a construction that does not depend on such a choice.

For $\cH$ a central arrangement, David Speyer and the second author defined a ring
$R(\cH)$ that doesn't depend on any choices, and admits a flat degeneration
to the face ring $\R[\Delta^{bc}_\cH]$ for any choice of ordering of $I$ \cite[Thm. 4]{PS}.
This ring is defined to be the subring of rational functions
on $V$ generated by $x_i^{-1}$, $i\in I$, where $x_i$ is the restriction of 
the $i^{\text{th}}$ coordinate function to $V \subset \R^I$.
More explicitly, if we put $\be_i = x_i^{-1}$, the ring has a presentation of the form
$$R(\cH) = \R[\be_i \mid i\in I]\left/\left< \sum_{i\in I_0}a_ie^{}_{I_0 \smallsetminus \{i\}} 
%\prod_{j\in S\smallsetminus\{i\}} \be_j
\bigmid I_0 \subset I\;\, \text{and}\;\,\sum_{i\in I_0} a_i x_i|_V = 0\right>\right..$$
(Note that this ideal is generated by those terms for which $I_0$ is a circuit.)
We now define a sheaf of $\cA$-algebras
$\cR$ on $L_\cH$ by putting $$\cR(F) := R(\cH_F)$$ for all $F\in L_\cH$, with restriction
maps $\cR(F)\to\cR(G)$ given by setting the variables
$\{\be_i\mid i\in F\smallsetminus G\}$ to zero.  
The $\cA(F)$-module
structure on $\cR(F)$ is given by the inclusion $V(F) \hookto \R^F$ defined in 
Section \ref{arrangements}.
Note that $\cH_F$ is always central, so the fact that the ring $R(\cH)$ is only defined
for central arrangements does not pose any problems.

\begin{proposition}\label{psw}
The sheaf $\cR$ is a minimal extension sheaf.
\end{proposition}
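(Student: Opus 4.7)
The plan is to deduce that $\cR$ is a minimal extension sheaf by comparing it to $\cR^{bc}$ via a flat degeneration, thereby reducing to Proposition \ref{broken circuit MES}. Fix an ordering of the indexing set $I$, which determines the broken circuit complexes $\Delta^{bc}_{\cH_F}$ and hence the sheaf $\cR^{bc}$.

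First I would verify pointwise freeness. By \cite[Thm.\ 4]{PS}, for each central arrangement $\cH_F$ the ring $R(\cH_F) = \cR(F)$ admits a Gr\"obner-style flat degeneration to the face ring $\R[\Delta^{bc}_{\cH_F}] = \cR^{bc}(F)$, with the monomials $\be^S = \prod_{i\in S}\be_i$ for $S \in \Delta^{bc}_{\cH_F}$ forming a basis of both sides as modules over $\cA(F) = \Sym V(F)$. Since the broken circuit complex is Cohen-Macaulay, the special fiber is a free $\cA(F)$-module, and flatness then forces $\cR(F)$ to be free as well.

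I would then interpolate these pointwise degenerations into a single flat family $\{\cR_t\}_{t\in\mathbb{A}^1}$ of $\cA$-modules on $L_\cH$ with $\cR_0 = \cR^{bc}$ and $\cR_t \cong \cR$ for every $t \neq 0$. The restriction maps in both $\cR$ and $\cR^{bc}$ are given by the same rule --- setting the appropriate generators $\be_i$ to zero --- which commutes with the rescaling defining the Gr\"obner degeneration, so the stalkwise degenerations glue into a degeneration of sheaves. This is where I expect \cite[Prop.\ 7]{PS} to enter, ensuring that compatible Gr\"obner weights can be chosen simultaneously on all the localizations $\cH_F$.

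With the flat family in hand, flabbiness of $\cR$ follows by semi-continuity: for every flat $F$ the cokernel of $\bdy_F \colon \cR_t(F) \to \cR_t(\bdy F)$ is a finitely generated graded $\cA(F)$-module whose Hilbert function in each degree is upper semi-continuous in $t$, so its vanishing at $t=0$ (by flabbiness of $\cR^{bc}$) forces vanishing in a Zariski-open neighborhood of $0$ and hence on $\cR \cong \cR_t$ for all $t \neq 0$. Thus $\cR$ is pure, and Proposition \ref{decomposition theorem} yields $\cR \cong \bigoplus_F \ker(\ol{\bdy_F}) \otimes_k \cL_F$. One checks directly that $\ker(\ol{\bdy_\emptyset}) = \R$ in degree zero since $\cR(\bdy \emptyset) = 0$, while the same semi-continuity argument applied to kernels bounds each remaining multiplicity for $\cR$ above by the corresponding multiplicity for $\cR^{bc} \cong \cL_\emptyset$, which vanishes for $F \neq \emptyset$. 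Hence $\cR \cong \cL_\emptyset$. The main obstacle is the sheaf-level interpolation: producing a genuine flat family on $L_\cH$ whose restriction maps degenerate correctly to those of $\cR^{bc}$; once this is done, purity and indecomposability of $\cR$ follow from the routine semi-continuity arguments sketched above.
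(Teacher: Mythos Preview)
Your strategy can be made to work, but the paper's route is shorter and uses the very compatibility observation you flag as the ``main obstacle'' to bypass flat families entirely. The paper takes pointwise freeness directly from \cite[Prop.~7]{PS} (that proposition asserts freeness of $R(\cH_F)$ over $\cA(F)$; it is not about choosing compatible Gr\"obner weights). For the rest, it observes that by \cite[Thm.~4]{PS} the monomials in the $\be_i$ whose support lies in $F$ and contains no broken circuit form an $\R$-basis of $\cR(F)$ --- not, as you write, an $\cA(F)$-basis indexed by faces of $\Delta^{bc}_{\cH_F}$. The same monomials are an $\R$-basis of $\cR^{bc}(F)$, and since the restriction maps in both sheaves send $\be_i\mapsto 0$ for $i\notin E$, these stalkwise identifications assemble into an isomorphism $\cR\cong\cR^{bc}$ of sheaves of graded $\R$-vector spaces. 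Flabbiness is a property of this underlying structure and transfers immediately. For indecomposability, the proof of Proposition~\ref{broken circuit MES} showed $\ker\ol{\bdy_F}=0$ by a pure degree count: $\cR^{bc}(F,\bdy F)$ vanishes below degree $2\rk F$ while $\ol{\cR^{bc}(F)}$ vanishes in degrees $\geq 2\rk F$. The first bound is visible from the vector-space isomorphism alone, and the second follows from it together with freeness of $\cR(F)$, so the same argument gives $\ker\ol{\bdy_F}=0$ for $\cR$.

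Your flat-family packaging has a soft spot at the last step. Upper semi-continuity of $\dim\ker\ol{\bdy_F}$ in $t$ requires the source $\ol{\cR_t(F)}$ to be flat over $k[t]$ (fine, by pointwise freeness) and the map $\ol{\bdy_F}$ to extend to a morphism of $k[t]$-modules whose target behaves well enough for rank to be lower semi-continuous; but $\cR_t(\bdy F)$ is an inverse limit and $\ol{\cR_t(\bdy F)}$ a further quotient, neither obviously flat over $k[t]$. This can likely be repaired, but it is exactly the bookkeeping the direct argument avoids.
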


\begin{proof}
We need to show that the stalks of $\cR$ are free modules over the stalks
of $\cA$, that $\cR$ is flabby, and that it is indecomposable as an $\cA$-module.
The first statement is proved in \cite[Prop. 7]{PS}.  
To prove the second and third statements, we will show that $\cR$ is isomorphic
to $\cR^{bc}$ as a sheaf of graded vector spaces.  Since we have already shown
in Proposition \ref{broken circuit MES} that $\cR^{bc}$ is flabby and that it is
indecomposable {\em for degree reasons}, this will be sufficient.

By \cite[Thm. 4]{PS} the ring $\cR(F)$ has an $\R$-basis given by 
all monomials in the variables $\be_i$
whose support is contained in $F$ and contains no broken circuit.
We have already observed that these monomials form a basis for the ring
$\cR^{bc}(F)$, hence they may be used to define a vector space isomorphism
$\psi_F\colon \cR^{bc}(F) \to \cR(F)$ for each $F$.
These isomorphisms are compatible with
the restriction maps, and therefore define an isomorphism of sheaves
of graded vector spaces.
\end{proof}

\vspace{-\baselineskip}
\begin{remark}\label{how canonical}
Propositions \ref{broken circuit MES} and \ref{psw} say that the sheaves
$\cR^{bc}$ and $\cR$ are both minimal extension sheaves on $L_\cH$,
and are therefore canonically isomorphic as sheaves of $\cA$-modules
by Corollary \ref{rigid MES}.
They are \emph{not}, however, isomorphic as sheaves of $\cA$-algebras.
Propositions \ref{broken circuit MES} and \ref{psw}, when coupled with
Theorem \ref{main theorem 1}, give two different ways to put a ring structure on the
equivariant intersection cohomology group $\IH^\udot_T(\M_\cH)$.
For now, we will say that only the ring structure coming from the sheaf
$\cR$ is canonical, because this did not involve making any unnatural choices.
Later we will give this assertion more precise mathematical meaning by showing
that, if $\cH$ is unimodular and central, the group equivariant IC sheaf of $\M_\cH$ admits
a unique ring structure in the equivariant derived category, and the induced
ring structure on $\IH^\udot_T(\M_\cH)$ is the one coming from $\cR$.
\end{remark}

\begin{example}\label{asymmetric}
 We illustrate the sheaves $\cR$ and $\cR^{bc}$ using a central arrangement 
of three lines in the plane.  More precisely, let $V  = V_0 \subset \R^3$ be 
the vector subspace spanned by $v_1 = (1, 0, 1)$ and $v_2 = (0, 1, 1)$.
The resulting linear poset is the one described in Example \ref{second}, with 
$V(1) = V/\R\cdot v_2$, $V(2) = V/\R\cdot v_1$, and $V(3) = V/\R\cdot (v_1 - v_2)$.
For all flats $F$ except the maximal flat $\{1, 2, 3\}$, we have 
\[\cR(F) = \cR^{bc}(F) \cong \cA(F) = \Sym V(F),\]
since $\cH_F$ is simple for these flats.

At the flat $F =\{1, 2, 3\}$, however, we see a difference between $\cR$ and $\cR^{bc}$. 
We have $\cR(F) = \R[e_1, e_2, e_3]/\la e_2e_3 + e_1e_3 - e_1e_2\ra$, 
while $\cR^{bc} = \R[e_1, e_2, e_3]/\la e_2e_3\ra$; these are clearly not
isomorphic as rings.  The sheaves $\cR$ and $\cR^{bc}$ {\em are} 
canonically isomorphic as modules over $\cA$, since they are
both minimal extension sheaves.  This gives an isomorphism
$\cR(F) \simeq \cR^{bc}(F)$ of $\cA(F) = \Sym(V)$-modules; it is
the unique isomorphism which induces the identity on the spaces
of linear forms.
\end{example}

\section{The hypertoric IC sheaf}
We now prove Theorem \ref{main theorem 1} and explore some of its consequences.

\subsection{Proof of Theorem \ref{main theorem 1}}
Let $\cH$ be an arbitrary rational arrangement, and $\tcH$ a simplification of $\cH$.
Let $$\M := \M_\cH \,\,\,\,\text{and}\,\,\,\,\wt\M := \M_\tcH.$$
Since $\tcH$ is simple, $\wt\M$ is rationally smooth, and
$\IC_{\wt\M,T} = \R_{\wt\M,T}$.
Our strategy will be to first prove the theorem for $\tcH$,
and then tackle the general case by
exploiting the relationship between the maps
$$\pi:L_\tcH\to L_\cH$$ of Section \ref{arrangements} and 
$$\varpi:\M_\tcH\to \M_\cH$$ of Proposition \ref{resolution}.  

\begin{proposition}\label{simple case}
There is an isomorphism
$\Loc \R_{\wt\M,T} \cong \cA_\tcH$
of graded $\cA_\tcH$-modules,
and the map $$\Gamma: H^\udot_T(\wt\M) \to \cA_\tcH(L_\tcH)$$ 
of Equation \eqref{localization homomorphism}
is an isomorphism of graded $\Sym V_0$-modules.
\end{proposition}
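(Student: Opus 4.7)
The proof splits naturally into a sheaf-theoretic statement and a global-sections statement.

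For the sheaf isomorphism $\Loc \R_{\wt\M, T} \cong \cA_\tcH$, the pointwise identification is essentially built into the construction of the localization functor. By Lemma \ref{stalk lemma} combined with the natural isomorphism \eqref{localized stalks}, for any flat $F$ and any $p \in S_F$ we have a canonical chain of identifications
\[
(\Loc \R_{\wt\M, T})(F) \cong H^\udot_T(Tp) = H^\udot_{T_F}(pt) = \Sym V(F) = \cA_\tcH(F)
\]
as graded $\cA_\tcH(F)$-modules. To upgrade this to a sheaf isomorphism, I would verify that for $E \le F$ the restriction $(\Loc \R_{\wt\M, T})(F) \to (\Loc \R_{\wt\M, T})(E)$ is, under the above identifications, induced by the inclusion $T_E \hookrightarrow T_F$ of stabilizers, i.e.\ by $T$-equivariant pullback along any projection $Tp_E \to Tp_F$; this is precisely the recipe used to define the restriction in $\cA_\tcH$ in Section \ref{sheaves on posets}.

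For the global-sections statement, I would first invoke Lemma \ref{Sections gives face ring} to identify $\cA_\tcH(L_\tcH) \cong \R[\Delta_\tcH]$, reducing the claim to exhibiting an isomorphism $H^\udot_T(\wt\M) \cong \R[\Delta_\tcH]$ that is compatible with $\Gamma$. The existence of such an isomorphism is the orbifold hypertoric cohomology theorem of Konno and Hausel-Sturmfels \cites{HS,Ko,HP}. For a more self-contained approach I would use a generic one-parameter subgroup $\C^\times \subset T_\C$ to produce a Bialynicki-Birula decomposition of $\wt\M$ into even-dimensional complex affine bundles; the resulting paving gives equivariant formality and, combined with a dimension count indexed by the faces of $\Delta_\tcH$, shows that $H^\udot_T(\wt\M)$ is a free $\Sym V_0$-module with Hilbert series matching $\R[\Delta_\tcH]$.

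The main obstacle is to match the classical identification $H^\udot_T(\wt\M) \cong \R[\Delta_\tcH]$ against $\Gamma$ on the nose, rather than against some other automorphism of the face ring. Here I would lean on equivariant formality: it implies that the total restriction map
\[
H^\udot_T(\wt\M) \longrightarrow \bigoplus_{F \in L_\tcH} H^\udot_T(Tp_F)
\]
is injective. By the construction recalled in the paragraph before Remark 1.10, $\Gamma$ is assembled from precisely these per-orbit restrictions followed by the stalk identifications of the first part, so $\Gamma$ is itself injective. Since its source and target are free $\Sym V_0$-modules of the same finite graded rank, injectivity upgrades to an isomorphism, concluding the proof.
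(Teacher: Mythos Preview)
Your proposal is correct, and for the bulk of the argument it follows the paper closely: the stalkwise identification of $\Loc \R_{\wt\M,T}$ with $\cA_\tcH$, the use of Lemma \ref{Sections gives face ring} to recognize the target of $\Gamma$ as $\R[\Delta_\tcH]$, and the appeal to the Konno/Hausel--Sturmfels computation of $H^\udot_T(\wt\M)$ are exactly what the paper does (the paper's trick for the restriction maps---they are $\Sym V_0$-module maps, so it suffices to check degree zero, which is forced by the existence of $\Gamma$---is slicker than your direct verification, but the content is the same).

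The genuine difference is in the last step, where you must show that $\Gamma$ \emph{itself} realizes the isomorphism rather than some twist of it. The paper simply evaluates $\Gamma$ on the known ring generators of $H^\udot_T(\wt\M)$ via the stalk maps and reads off that it agrees with the face-ring description. You instead argue softly: equivariant formality (which you get for free once $H^\udot_T(\wt\M)\cong\R[\Delta_\tcH]$ is known to be free over $\Sym V_0$) makes restriction to the $T$-fixed locus injective, hence the composite of $\Gamma$ with the inclusion $\cA_\tcH(L_\tcH)\hookrightarrow\prod_F\cA_\tcH(F)$ is injective, so $\Gamma$ is injective; then a graded-dimension count finishes. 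Your route avoids any explicit computation and would transplant to other equivariantly formal situations, while the paper's route is more concrete and pins down $\Gamma$ as a specific ring map, which is useful later when the restriction maps in $\cE$ are identified in Proposition \ref{pushforward prop}.
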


\begin{proof}
The stalk of $\Loc \R_{\wt\M,T}$ at a point $x\in L_\tcH$ is the $T$-equivariant
cohomology of any fiber of $S_x \to S_x/T$, which is isomorphic to $\cA(x)$.
The restriction maps in $\Loc \R_{\wt\M,T}$ are module maps over $\Sym V_0$,
thus to see that they are the natural quotient maps 
it is enough to know that they are isomorphisms in degree zero. 
This follows from the existence of the map $\Gamma$.

It remains only to show that $\Gamma$ is an isomorphism.
Lemma \ref{Sections gives face ring} tells us that $\cA(L_\tcH)$
is isomorphic to the face ring $\R[\Delta_\tcH]$.
The fact that $H^\udot_T(\wt\M)$ is also isomorphic to $\R[\Delta_\tcH]$
can be inferred from \cite{Ko} in the case where $\cH$ is unimodular,
and from \cite[1.1]{HS} in the general case.  For an explicit statement and proof
of this result, see \cite[3.2.2]{Pr}.  The values of $\Gamma$ on the generators
of $H^\udot_T(\wt\M)$ can be computed via the stalk maps, from which we can conclude
that $\Gamma$ is an isomorphism.
\end{proof}

The following proposition says that localization commutes with pushing forward.

\begin{proposition} \label{pushforward prop} 
There is a natural isomorphism of $\cA_\cH$-modules
$$
\pi_*\cA_{\wt\cH} \cong \Loc \varpi_*\R_{\wt\M,T}.
$$\end{proposition}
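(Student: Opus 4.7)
The plan is to combine Proposition \ref{simple case}, which identifies $\Loc \R_{\wt\M,T}$ with $\cA_{\wt\cH}$, and Proposition \ref{resolution}, which describes the fibers of $\varpi$, first matching stalks of both sides and then verifying compatibility of restriction maps.

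To construct the natural map, I would use functoriality. The commutative square of quotient maps
$$
\begin{array}{ccc}
\wt\M & \stackrel{\varpi}{\to} & \M \\
\downarrow\wt\mu & & \downarrow \mu \\
\wt\M/T & \stackrel{\bar\varpi}{\to} & \M/T
\end{array}
$$
together with the base-change isomorphism $Q_{\mu *}\varpi_* \cong \bar\varpi_* Q_{\wt\mu *}$ (valid since $\varpi$ is proper, so $\varpi_*$ commutes with the Borel construction) gives, on passing to cohomology sheaves, a natural morphism $\bar\varpi_* (\R_{\wt\M,T})^{\wt\mu} \to (\varpi_*\R_{\wt\M,T})^\mu$. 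Pushing forward by $\pi_{\M *}$ and composing with the sheaf isomorphism $\pi_*\cA_{\wt\cH}\cong\pi_*\Loc \R_{\wt\M,T}$ from Proposition \ref{simple case} yields the desired homomorphism $\pi_*\cA_{\wt\cH} \to \Loc \varpi_*\R_{\wt\M,T}$.

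To show it is an isomorphism I would compute both stalks at a flat $F \in L_\cH$ and match them. For the left side, $(\pi_*\cA_{\wt\cH})(F) = \cA_{\wt\cH}(\pi^{-1}(U_F))$; using the identification $\pi^{-1}(U_F) \cong L_{\wt{\cH_F}}$ of linear posets described in the proof of Proposition \ref{E is pure}, this becomes $\cA_{\wt{\cH_F}}(L_{\wt{\cH_F}})$. For the right side, equation \eqref{localized stalks} together with base change for the proper map $\varpi$ gives $(\Loc \varpi_*\R_{\wt\M,T})(F) \cong H^\udot_T(\varpi^{-1}(Tp))$ for $p \in S_F$; by Proposition \ref{resolution}, $\varpi^{-1}(Tp)$ is $T$-equivariantly homotopy equivalent to $T \times_{T_F} \M_{\wt{\cH_F}}$, making this $H^\udot_{T_F}(\M_{\wt{\cH_F}})$, and Proposition \ref{simple case} applied to the simple rational arrangement $\wt{\cH_F}$ identifies the latter with $\cA_{\wt{\cH_F}}(L_{\wt{\cH_F}})$. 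Both chains realize the canonical identification that our constructed map induces.

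Finally I would check that the stalk isomorphisms commute with the restriction maps. For $E \le F$, the restriction on the left is induced by the open inclusion $\pi^{-1}(U_E) \subset \pi^{-1}(U_F)$, and on the right by the inclusion of a stratified tubular neighborhood of $S_E$ into one of $S_F$; under the identifications above both unwind to the natural restriction map $\cA_{\wt{\cH_F}}(L_{\wt{\cH_F}}) \to \cA_{\wt{\cH_E}}(L_{\wt{\cH_E}})$. The resulting sheaf isomorphism is $\cA_\cH$-linear because on each stalk both $\Sym V(F)$-actions factor the universal $\Sym V_0 = H^\udot_T(pt)$-action. I expect the principal obstacle to be this last compatibility step, which requires tracking the naturality of Proposition \ref{simple case} with respect to the $T_F$-equivariant open embedding of $\M_{\wt{\cH_E}}$ as a deformation retract of a $T_F$-invariant open subset of $\M_{\wt{\cH_F}}$.
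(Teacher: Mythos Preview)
Your proposal is correct and follows essentially the same strategy as the paper: identify the stalks of both sheaves at each flat $F$ with $H^\udot_{T_F}(\M_{\wt{\cH_F}})$ via Proposition~\ref{resolution}, and then verify that the restriction maps agree.  The only notable difference is in how the compatibility step---which you rightly flag as the main obstacle---is handled: rather than tracking naturality of Proposition~\ref{simple case} under equivariant open embeddings, the paper works concretely with the face ring presentation $\R[\Delta_{\wt{\cH_F}}]$ (via Lemma~\ref{Sections gives face ring} and \cite[3.2.2]{Pr}) and checks directly that on both sides the restriction $\cE(F)\to\cE(E)$ and $\cF(F)\to\cF(E)$ send the generator $e_i$ to $e_i$ if $i\in E$ and to $0$ otherwise, using the geometric characterization of $e_i$ as a class supported on the stratum $S_{\{i\}}$.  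This concrete bookkeeping sidesteps the abstract naturality argument you anticipate needing.
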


\begin{proof}
Let $\cE = \pi_*\cA_{\wt\cH}$ and $\cF = \Loc \varpi_*\R_{\wt\M,T}$.
As was explained in the proof of Proposition \ref{E is pure}, 
the stalk of $\cE$ at a flat $F$ is isomorphic to the 
face ring $\R[\Delta_{\wt{\cH_F}}]$.  The restriction map
$\cE(E) \to \cE(F)$ for $E \le F$ sends a generator $\be_i$, $i\in E$
to the corresponding generator of $\cE(F)$ if $i \in F$, and to $0$
otherwise.

On the topological side, the stalk of $\cF$ at a flat $F\in L_\cH$ is 
the equivariant cohomology 
\[H^\udot_T(\varpi^{-1}(Tp))\cong H^\udot_{T_F}(\varpi^{-1}(p)),\] 
where $p$ is any point in the stratum $S_F \subset \M$
and $T_F\subs T$ is the stabilizer of $p$.  
Proposition \ref{resolution} tells us that
$\varpi^{-1}(p)$ is $T_F$-equivariantly homotopy equivalent to
$\M_{\wt{\cH_F}}$, thus the stalk $\cF_F$ is isomorphic to 
$H_{T_F}^\udot(\M_{\wt{\cH_F}})$, which in turn is isomorphic to
$\R[\Delta_{\wt{\cH_F}}]$ by \cite[3.2.2]{Pr}.  
What remains 
is to show that the restriction maps and the 
$\cA_\cH$-module structure are the same as those of $\cE$.

For any $i\in F$, let $S_i$ be the stratum of $\M_{\wt{\cH_F}}$ corresponding
to the singleton flat $\{i\}$.
The isomorphism of \cite[3.2.2]{Pr} identifies the generator $e_i\in \R[\Delta_{\wt{\cH_F}}]$ 
with a class in $H_{T_F}^\udot(\M_{\wt{\cH_F}})$
which restricts to a nonzero class on any
$T_F$-orbit in $S_{\{i\}}$, but restricts to zero on any 
$T_F$-orbit in $S_{\{j\}}$ for $j\neq i$.  It follows that, for any $E\leq F\in L_\cH$, our
restriction maps 
$$\R[\Delta_{\wt{\cH_F}}]\cong\cE(F) \to \cE(E) \cong \R[\Delta_{\wt{\cH_E}}]$$
are given by setting $e_i$ to zero for all $i\in F\smallsetminus E$,
which agree with the maps
$$\R[\Delta_{\wt{\cH_F}}]\cong\cF(F) \to \cF(E) \cong \R[\Delta_{\wt{\cH_E}}]$$
given in the proof of Proposition \ref{E is pure}.  The agreement of the $\cA_\cH$-module
structures may be verified in a similar manner.
\end{proof}

The final ingredient to the proof of Theorem \ref{main theorem 1} is the following result of
\cite[4.3]{PW}.

\begin{theorem}\label{ihbetti}
If $\cH$ is a central arrangement defined over the rational numbers,
then the intersection cohomology Poincar\'e polynomial
of $\M_\cH$ coincides with the $h$-polynomial 
of the broken circuit complex with degrees doubled:
$$\Hilb(\IH^\udot(\M_\cH),t) = h_\cH^{bc}(t^2).$$
\end{theorem}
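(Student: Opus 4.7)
The plan is to proceed by induction on $\rank\cH$. The base case $\rank\cH=0$ is trivial: $\M_\cH$ is a point and both sides equal $1$. For the inductive step, fix a rational simplification $\tcH$ of $\cH$; since $\tcH$ is simple, $\M_\tcH$ is rationally smooth, and Proposition \ref{resolution} provides a projective semismall resolution $\varpi\colon\M_\tcH\to\M_\cH$. I would apply the Beilinson--Bernstein--Deligne decomposition theorem to this map.

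Because the stratum quotients $S_F/T$ are simply connected (cf.\ the corollary to Proposition \ref{strata and slices}), the local systems appearing are all constant, and semismallness forces the decomposition to take the clean form
\[
\varpi_*\R_{\M_\tcH}\;\cong\;\bigoplus_{F\in L_\cH}\,V_F\otimes\IC(\overline{S_F})[-2\rank F],
\]
with each $V_F$ an ungraded vector space whose dimension equals that of the top cohomology $H^{2\rank F}(\varpi^{-1}(p))$ of the fiber over any $p\in S_F$. By Proposition \ref{resolution} this fiber is homotopy equivalent to $\M_{\wt{\cH_F}}$, whose cohomology is $\R[\Delta_{\wt{\cH_F}}]/(V_0)$ with Poincar\'e polynomial $h_{\cH_F}(t^2)$ \cite[3.2.2]{Pr}; hence $\dim V_F = c_F := [q^{\rank F}]\,h_{\cH_F}(q)$.

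Taking Poincar\'e polynomials of the decomposition, combined with $P(\M_\tcH,t)=h_\cH(t^2)$ and $\overline{S_F}\cong\M_{\cH^F}$ (Proposition \ref{strata and slices}), yields
\[
h_\cH(t^2)\;=\;\sum_{F\in L_\cH}c_F\cdot t^{2\rank F}\cdot P_{\IH}(\M_{\cH^F},t),
\]
in which the inductive hypothesis gives $P_{\IH}(\M_{\cH^F},t)=h^{bc}_{\cH^F}(t^2)$ for every $F\ne\emptyset$, and $c_\emptyset=1$ makes the $F=\emptyset$ summand equal to $P_{\IH}(\M_\cH,t)$. Solving for the unknown reduces the theorem to the purely combinatorial identity
\[
h_\cH(q)\;=\;\sum_{F\in L_\cH}c_F\cdot q^{\rank F}\cdot h^{bc}_{\cH^F}(q),
\]
whose $F=\emptyset$ contribution is precisely $h^{bc}_\cH(q)$.

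The main obstacle is verifying this combinatorial identity, which is the matroid-theoretic shadow of the semismall BBD decomposition. I would approach it via Tutte-polynomial manipulations: both $h_\cH$ and the broken-circuit $h$-polynomials are standard Tutte evaluations, and a convolution identity for the Tutte polynomial under the decomposition of $\cH$ into localizations $\cH_F$ and restrictions $\cH^F$ provides a natural rearrangement that, after extracting the top $h$-coefficients $c_F$, produces the identity. A direct shelling proof is also available, organizing top-dimensional faces of $\Delta_\cH$ by the minimal flat that first absorbs them into a broken circuit; this argument mirrors the geometric semismallness of $\varpi$ combinatorially.
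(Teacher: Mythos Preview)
The paper does not supply its own proof of this theorem: it is quoted verbatim from \cite[4.3]{PW} and used as a black-box input to the proof of Theorem~\ref{main theorem 1}. There is therefore no in-paper argument to compare your proposal against.

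That said, your outline is essentially the argument of \cite{PW}: apply the decomposition theorem to the semismall map $\varpi\colon\M_{\tcH}\to\M_\cH$ of Proposition~\ref{resolution}, use Proposition~\ref{strata and slices} and \cite[3.2.2]{Pr} to identify the multiplicity space over $S_F$ with the top graded piece of $\ol{\R[\Delta_{\cH_F}]}$, and induct on rank. Your justification that the local systems are constant is the right one (the relevant monodromy factors through $\pi_1(S_F/T)=1$), and the resulting recursion
\[
h_\cH(q)\;=\;\sum_{F\in L_\cH}\bigl([q^{\rank F}]\,h_{\cH_F}(q)\bigr)\,q^{\rank F}\,h^{bc}_{\cH^F}(q)
\]
is correct (and easily checked on small examples such as three concurrent lines).

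The only genuine gap in the proposal is the last paragraph: you assert that this combinatorial identity follows from ``Tutte-polynomial manipulations'' or ``a direct shelling proof'' without actually carrying either out. This identity is not entirely formal, and a complete proof requires a real argument---either the Tutte/characteristic-polynomial convolution you allude to, or an explicit bijection organizing bases of $\Delta_\cH$ according to the flat at which their internal activity is exhausted. In \cite{PW} this step is handled carefully; if you intend your write-up to be self-contained, you should do the same.
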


\begin{corollary}\label{ic stalk betti}
Let $\cH$ be any arrangement defined over the rational numbers, and let $F$ be a flat of $\cH$.
For any point $x_F$ on the stratum $S_F$, the Hilbert series of the cohomology
of the stalk of the $\IC$ sheaf at $x_F$ coincides with the $h$-polynomial of the broken circuit complex
of the localized arrangement $\cH_F$ with degrees doubled:
$$\Hilb(H^\udot_{x_F}(\IC_{\M_\cH}),t) = h_{\cH_F}^{bc}(t^2).$$
\end{corollary}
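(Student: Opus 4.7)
The plan is to reduce the computation of the stalk $H^\udot_{x_F}(\IC_{\M_\cH})$ to the ordinary intersection cohomology of the hypertoric variety $\M_{\cH_F}$ associated to the localized arrangement, and then to apply Theorem \ref{ihbetti}, which is already established and directly applies because $\cH_F$ is central.

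The first step uses Proposition \ref{strata and slices}: a stratified tubular neighborhood of $x_F$ in $\M_\cH$ is $T$-equivariantly homeomorphic to $T \times_{T_F} \M_{\cH_F} \times D$, with $x_F$ corresponding to the triple $(1, p_0, 0)$, where $p_0$ is the unique $T_F$-fixed point of $\M_{\cH_F}$, comprising the closed stratum indexed by the maximal flat of $L_{\cH_F}$. Since the $T$-bundle factor and the disk $D$ are smooth, the IC sheaf on the neighborhood factors as $\IC_{\M_{\cH_F}}$ tensored with constant sheaves on $T/T_F$ and $D$, so taking stalks at $x_F$ gives a canonical isomorphism
\begin{equation*}
H^\udot_{x_F}(\IC_{\M_\cH}) \;\cong\; H^\udot_{p_0}(\IC_{\M_{\cH_F}}).
\end{equation*}

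In the second step I would exploit the fact that $\cH_F$ is central: the overall scaling $\mathbb{C}^*$-action on $\H^F$ descends to an action on $\M_{\cH_F}$ which fixes $p_0$ and contracts the space onto $p_0$. This action preserves the stratification of $\M_{\cH_F}$, so $\IC_{\M_{\cH_F}}$ is constructible for a $\mathbb{C}^*$-invariant stratification. A standard argument using a shrinking family of $\mathbb{C}^*$-invariant neighborhoods of $p_0$ then yields
\begin{equation*}
H^\udot_{p_0}(\IC_{\M_{\cH_F}}) \;\cong\; \mathbb{H}^\udot(\M_{\cH_F}, \IC_{\M_{\cH_F}}) \;=\; \IH^\udot(\M_{\cH_F}).
\end{equation*}
Applying Theorem \ref{ihbetti} to the central rational arrangement $\cH_F$ gives $\Hilb(\IH^\udot(\M_{\cH_F}), t) = h_{\cH_F}^{bc}(t^2)$, and combining the two isomorphisms above completes the proof.

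The main point that requires care is the identification of the stalk of $\IC$ at the attractive fixed point $p_0$ with the global $\IH$ of the contracting space $\M_{\cH_F}$. This is a standard property of contracting $\mathbb{C}^*$-actions on constructible equivariant complexes, but one should verify compatibility with the support condition built into $\IC$; fortunately the Hilbert series $h_{\cH_F}^{bc}(t^2)$ already vanishes in degrees $\ge 2\rk F = \dim_\mathbb{C} \M_{\cH_F}$, since $\deg h_{\cH_F}^{bc}(q) < \rk F$ by Section \ref{mat}, so there is no conflict.
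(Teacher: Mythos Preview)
Your proposal is correct and follows essentially the same route as the paper's proof, which simply cites Proposition \ref{strata and slices} and Theorem \ref{ihbetti}. You have filled in the intermediate step that the paper leaves implicit: identifying the stalk of $\IC_{\M_{\cH_F}}$ at the cone point $p_0$ with the global $\IH^\udot(\M_{\cH_F})$, which you do via the contracting $\C^*$-action, whereas the paper's Proposition \ref{strata and slices} already records that the normal slice is topologically a cone, making this identification a direct consequence of the cone formula for IC stalks.
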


\begin{proof}
This follows immediately from Proposition \ref{strata and slices} and Theorem \ref{ihbetti},
the latter applied to the central arrangement $\cH_F$.
\end{proof}

The decomposition theorem for perverse sheaves of \cite{BBD}
tells us that $\Loc \IC_T(\M)$ is a summand of
$\Loc \varpi_*\R_{\wt\M,T}$, and therefore of $\cE$ by Proposition
\ref{pushforward prop}.
Using the computations
of the stalk Betti numbers of the minimal extension
sheaf $\cL$ (Corollary \ref{Betti number theorem}) 
and of the intersection cohomology sheaf (Corollary \ref{ic stalk betti}),
we see that this summand must be $\cL$, in its 
unique embedding into $\cE$ described in Lemma \ref{Combinatorially semi-small}(c).

To see that the map $$\Gamma: \IH^\udot_T(\M)\to\cL(L_\cH)$$ is an isomorphism,
consider the class of objects $B \in D^b_{T,\sS}(\M)$ 
for which $$\GB:H^\udot_T(\M; B) \to (\Loc B)(L_\cH)$$ is an isomorphism.  
Lemma \ref{Sections gives face ring} and Proposition \ref{pushforward prop} 
combine to show
that $\varpi_*\R_{\wt\M,T}$ belongs to this class.  
Since this class of sheaves is clearly closed under taking summands, 
$\IC_T(\M)$ also belongs to it.  This completes the proof of
Theorem \ref{main theorem 1}.

As a consequence of Theorem \ref{main theorem 1} and Proposition \ref{psw}
we obtain the following corollary, which was conjectured in \cite[6.4]{PW}.

\begin{corollary}\label{it's a ring}
There is a natural isomorphism of graded vector spaces $\IH^\udot_T(\M)\cong R(\cH)$.
\end{corollary}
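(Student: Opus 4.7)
The plan is to chain together Theorem \ref{main theorem 1} and Proposition \ref{psw} via the rigidity of minimal extension sheaves on $L_\cH$. First, Theorem \ref{main theorem 1} provides an isomorphism of graded $\Sym V_0$-modules
$$\IH^\udot_T(\M_\cH) \;\cong\; \cL(L_\cH),$$
where $\cL = \cL_\emptyset$ is the minimal extension sheaf on $L_\cH$ with full support.  Second, Proposition \ref{psw} asserts that the sheaf $\cR$ defined by $\cR(F) = R(\cH_F)$ is itself a minimal extension sheaf on $L_\cH$.

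Next I would identify $\cR$ with $\cL_\emptyset$.  Since $\cR(\emptyset) = R(\cH_\emptyset) \cong \R$ is nonzero, the support of $\cR$ is all of $L_\cH$, so by the uniqueness clause of Proposition \ref{decomposition theorem} we must have $\cR \cong \cL_\emptyset$ as sheaves of graded $\cA_\cH$-modules.  By Corollary \ref{rigid MES}, $\cL_\emptyset$ is rigid, so this isomorphism is unique up to multiplication by a nonzero scalar; it can be pinned down canonically by requiring that the degree-zero unit elements on either side correspond.  Taking global sections yields a canonical isomorphism of graded vector spaces
$$\cL(L_\cH) \;\cong\; \cR(L_\cH).$$

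It remains to identify $\cR(L_\cH)$ with $R(\cH)$.  When $\cH$ is central with unique maximal flat $I$, the whole poset $L_\cH$ coincides with the smallest open set $U_I$ containing $I$, so by Proposition \ref{stalks are enough} we have
$$\cR(L_\cH) \;=\; \cR(U_I) \;=\; \cR(I) \;=\; R(\cH_I) \;=\; R(\cH)$$
directly from the definition of $\cR$.  When $\cH$ is not central, the ring $R(\cH)$ of \cite{PS} is not a priori defined, and I would read the corollary as extending the definition by $R(\cH) := \cR(L_\cH) = \ilim_{F \in L_\cH} R(\cH_F)$.  Composing the three isomorphisms yields the desired $\IH^\udot_T(\M_\cH) \cong R(\cH)$.

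There is no substantive obstacle here: the genuinely difficult work has already been carried out in Theorem \ref{main theorem 1} (equivariant formality combined with the stalk computations via the semismall map $\varpi$) and in Proposition \ref{psw} (the verification that $\cR$ is flabby, pointwise free, and indecomposable via the Speyer--Proudfoot monomial basis).  The only mildly delicate point is naturality of the composite: rigidity of $\cL_\emptyset$ leaves a scalar ambiguity at each stage, and this is removed uniformly by requiring compatibility with the canonical degree-zero unit classes (cf.\ the remark following Theorem \ref{main theorem 1}).  Once this matching is imposed, the composite isomorphism $\IH^\udot_T(\M_\cH) \cong R(\cH)$ is independent of all auxiliary choices.
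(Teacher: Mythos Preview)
Your proof is correct and follows exactly the approach the paper indicates: the paper does not write out a separate proof of this corollary but simply states it ``as a consequence of Theorem \ref{main theorem 1} and Proposition \ref{psw}'', and you have spelled out precisely that chain of isomorphisms, together with the use of rigidity (Corollary \ref{rigid MES}) to pin down the scalar ambiguity. Your observation that in the non-central case $R(\cH)$ is not literally defined in \cite{PS} and should be read as $\cR(L_\cH)$ is a genuine clarification that the paper glosses over.
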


The second half of that conjecture, which states that the ensuing ring structure
on $\IH^\udot_T(\M)$ may be interpreted as an intersection pairing, is the subject
of Section \ref{lifting ring structure}.

\subsection{The Morse stalk}
Fix a rational arrangement $\cH$, and let $L = L_\cH$ and $\M = \M_\cH$.
Although Theorem \ref{main theorem 1} gives a topological 
interpretation for the global sections of a minimal extension sheaf on $L$,
it is more difficult to understand the sections on other 
open sets.  In this section we study the space of sections over a particular open
set, and discover its topological meaning.  
The results of this section are not needed in the rest of the paper;
we include them because they will be of fundamental importance
in the forthcoming paper \cites{BLPW}.

A naive guess would be that the sections on 
an open set $U \subset L_\cH$ give the intersection cohomology
of $\bigcup_{F \in U} S_F$, but this is {\em not} correct.
Too much information has been 
lost by the localization functor, since by \eqref{localized stalks} it 
essentially treats each stratum as if it were a single $T$-orbit.
For instance, $\cL(\{\emptyset\}) = \R$, which is not
isomorphic to $$H^\udot_T(S_\emptyset) \cong H^\udot(S_\emptyset/T) \cong
H^\udot\!\left(\hkv\smallsetminus\bigcup_{i \in I} \hkhi\right).$$ 
Later, in Section \ref{lifting ring structure},
we will introduce a more refined localization functor 
which uses the cohomology along entire strata and which
can capture the intersection cohomology of any open union of strata.

There is one other case in which we can understand
the space of sections $\cL(U)$, namely when 
$U = L_{>0}$ is the set of flats of corank $> 0$, 
corresponding to strata of positive dimension. 
We will interpret the spaces $\cL(L_{>0})$ and $\cL(L,L_{>0})$
in terms of the intersection cohomology of certain subspaces
of $\M$, as follows.  

We begin by recording some more properties of hypertoric varieties.
The algebro-geometric construction to which we alluded in Section \ref{Hypertoric basics}
allows us to extend the action of $T$ on $\M$ to an algebraic action of the
complexified torus $T_\C$.  This action is hamiltonian with respect to a complex
symplectic form on $\M$, with moment map $$\mu^{}_\C:\M \to \t^*_\C \cong V_0^\C$$
obtained by composing the hyperk\"ahler moment map
$\mu:\M\to\hkv$ with the projection
$\hkv = V \times V_0^\C \to V_0^\C$.
For any polyhedron $\Delta\subs V_0$, let $X_\Delta$ be the associated
toric variety along with its natural $T_\C$-action.
The following proposition first appeared in \cite[\S 2]{HP}.

\begin{proposition}
The subvariety $\mu_\C^{-1}(0)$ of $\M$ is $T_\C$-equivariantly
isomorphic to a union of toric varieties $X_{\Delta}$ indexed by the chambers
of $\cH$, where $X_{\Delta}$ is glued to $X_{\Delta'}$ along the toric subvariety
$X_{\Delta\cap\Delta'}$.
\end{proposition}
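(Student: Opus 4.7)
The plan is to lift the statement to the level of $\H^I$ and decompose $\Psi_{\C}^{-1}(0)$ by which coordinate vanishes in each factor, then verify that each resulting piece descends to a toric variety in $\M$ whose fan polyhedron is exactly a chamber of $\cH$.

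First I would work upstairs. A point $(z,w)\in\H^I$ lies in $\Psi_\C^{-1}(0)$ precisely when $z_iw_i=0$ for every $i\in I$. For each sign vector $\eps\in\{+,-\}^I$, let
\[Y_\eps := \{(z,w)\in\H^I \mid w_i=0\;\text{if}\;\eps_i=+,\;z_i=0\;\text{if}\;\eps_i=-\}.\]
Each $Y_\eps$ is a copy of $\C^I$; let $u_i=z_i$ or $u_i=w_i$ according to the sign. In these coordinates, $\Psi_\R|_{Y_\eps}(u)=\sum_i\eps_i|u_i|^2\delta_i$, which is (up to the sign convention) the moment map for the standard $T^I$-action on $\C^I$, composed with the automorphism of $T^I$ whose $i^{\text{th}}$ factor is inversion iff $\eps_i=-$. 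In particular, $Y_\eps \cap \Psi^{-1}(V\times\{0\})$ is the preimage under this standard moment map of the polyhedron $P_\eps = \{x\in\R^I \mid \eps_i x_i\ge 0\text{ for all }i\}\cap V$.

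Next I would take the $T_{V_0}$-quotient. Since the $T_{V_0}$-action preserves each $Y_\eps$ (because $T_{V_0}$ is a subtorus of $T^I$ and the defining equations of $Y_\eps$ are $T^I$-invariant), the image $X_\eps$ of $Y_\eps\cap\Psi^{-1}(V\times\{0\})$ inside $\M=\mu_\C^{-1}(0)\subset\M_\cH$ is a standard symplectic/GIT quotient of $\C^I$ by $T_{V_0}$ at the moment polyhedron $P_\eps$. By the Delzant/Atiyah symplectic description of toric varieties, this is the toric variety $X_{\Delta_\eps}$ associated to the polyhedron $\Delta_\eps := \pi_{V_0}(P_\eps)\subs V_0$, with its residual $T_\C = T^I_\C/(T_{V_0})_\C$-action. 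The polyhedron $\Delta_\eps$ is nonempty exactly when $\eps$ is the sign vector of some chamber $\Delta$ of $\cH$, in which case it is canonically identified with $\Delta$ itself (after the usual affine translation identifying $V$ with $V_0$). This gives a bijection between chambers of $\cH$ and the nonempty pieces $X_\Delta$.

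For the gluing step, I would observe that $Y_\eps\cap Y_{\eps'}$ is the locus where $z_i=w_i=0$ whenever $\eps_i\neq\eps'_i$. Passing to the symplectic quotient, this intersection is the toric subvariety of $X_\eps$ cut out by setting the corresponding coordinates to zero, which matches exactly with the face of the moment polyhedron $\Delta_\eps$ obtained by intersecting with the hyperplanes $H_i$ for $i$ in the symmetric difference. For adjacent chambers $\Delta,\Delta'$ this face is $\Delta\cap\Delta'$, and the corresponding toric subvariety is $X_{\Delta\cap\Delta'}$, embedded compatibly into both $X_\Delta$ and $X_{\Delta'}$. Finally, $T_\C$-equivariance of all identifications is automatic since each step is $T^I_\C$-equivariant and $T_\C = T^I_\C/(T_{V_0})_\C$.

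The main obstacle is book-keeping rather than depth: I expect most of the work to go into making the identification of the symplectic quotient of $Y_\eps$ with the toric variety $X_{\Delta_\eps}$ precise (including the unbounded case, where one must take the partial toric variety associated to the unbounded polyhedron $\Delta$, cf.\ \cite{BD}), and into verifying that the chamber matching is compatible with the gluing on overlaps — i.e.\ that the toric subvarieties glued in by $Y_\eps\cap Y_{\eps'}$ really are identified with one another via the given embeddings into $\M$, rather than merely being abstractly isomorphic.
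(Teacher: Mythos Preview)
The paper does not actually prove this proposition; it simply states it and cites \cite[\S 2]{HP}, so there is no in-paper argument to compare against.  Your sketch is the standard approach and is essentially what one finds in \cite{HP}: decompose $\Psi_\C^{-1}(0)$ into the coordinate Lagrangian subspaces $Y_\eps\cong\C^I$ indexed by sign vectors, observe that on each piece the residual real moment map is (a sign-twist of) the standard one, and then recognize the $T_{V_0}$-reduction as the toric variety with moment polyhedron the closed chamber $\Delta_\eps$.

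Two minor points of care.  First, the claim that $P_\eps$ is nonempty \emph{exactly} when $\eps$ is the sign vector of a chamber is not quite right as stated: sign vectors of lower-dimensional faces also give nonempty $P_\eps$, but the corresponding pieces are not irreducible components --- they sit inside the closures of the full-dimensional ones.  What you really want is that the irreducible components of $\mu_\C^{-1}(0)$ are precisely the $X_{\Delta_\eps}$ for $\eps$ realizing an open chamber.  Second, the invocation of Delzant is a slight overreach: the chambers are typically unbounded and the reductions can be singular (orbifold) toric varieties, so the clean Delzant correspondence does not literally apply.  You already flag this in your final paragraph; the fix is exactly as you say, to use the symplectic/GIT description of not-necessarily-smooth, not-necessarily-compact toric varieties associated to rational polyhedra, as in \cite{BD}.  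With those adjustments the argument is complete.
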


\begin{remark}
The subvariety $\mu_\C^{-1}(0)$ is called the {\bf extended core} of $\M$.
Sitting inside the extended core is the ordinary core, which has already been
mentioned (when $\cH$ is simple) in Remark \ref{core}.  The ordinary
core may be defined as the union of all compact toric subvarieties of the extended core.
If at least one chamber $\Delta$ is bounded, then the core may be described
as the union of those $X_{\Delta}$ for which $\Delta$ is bounded.
But this is not always the case.  For example if $\cH$ is central, then every chamber
is unbounded, and the core consists of a single point.
\end{remark}

Fix a cocharacter $\rho\colon U(1) \to T$
for which $\M^{\rho(U(1))} = \M^T$, and let $\rc\colon\C^*\to T_\C$ denote its complexification.
Define two subspaces $\M^\pm$ of $\M$ by
\[\M^{\pm} := \left\{x \in \M \bigmid \lim_{t\to 0} 
\rc(t)^{\pm 1} x \; \text{exists in $\M$}\right\}.\]
Both of these subspaces are unions of extended core components of $\M$.
Specifically, we can view the derivative $d\rho$ of $\rho$
as an element of $\mathop{\mathrm{Lie}} T = V^*$, and $\M^+$ (respectively $\M^-$)
is the union of those $X_\Delta$ for which $d\rho$ has a minimum
(respectively a maximum) on $\Delta$.  The intersection 
$\M^0 = \M^+ \cap \M^-$ is the ordinary core of $\M$.

The space $\M \setminus \M^+$ inherits a stratification
by restricting the strata from $\M$.  All the
strata intersect non-trivially with $\M \setminus \M^+$
except for the zero-dimensional strata, so the poset of
strata is $L_{>0}$.  We would like to consider the localization functor associated
with this stratified space, but to do so we must first check that conditions (A) through (C)
of Section \ref{localization functor} are satisfied.  Conditions (A) and (C) are local, 
so they are preserved when we pass to the open subset $\M \setminus \M^+$
of $\M$.  For condition (B), 
suppose that $\dim S_F \ne 0$.  Then the quotient
$(S_F \setminus (S_F\cap \M^+))/T$ may be identified (via the hyperk\"ahler moment map) with a subset of $H^{\text{hk}}_F$.
The part of $H^{\text{hk}}_F$ that we do {\em not} get includes
the subspaces $\hkhi \cap H^{\text{hk}}_F$, $i\notin F$, which have real codimension three,
along with a finite number of
polyhedral subspaces of real codimension $2\rank \cH^F$.
If $\rank \cH^F > 1$, then we are done. If
$\rank \cH^F = 1$, we get the complement of 
a finite number of line segments and a ray joined end-to-end in 
$\R^3$, so again the space is simply connected. 

Thus we have a localization functor 
$\Loc\colon D^b_{T,\sS}(\M\setminus \M^+) \to (\cA|_{L_{>0}})\md$.  
It is easy to see that restricting from $\M$ to $\M\setminus \M^+$ 
and then localizing is the same as localizing and then 
restricting to $L_{>0}$.  It follows that for any $B \in D^b_{T,\sS}(\M)$,
if we put $\cB = \Loc B$, then we have a commutative diagram with exact rows
\begin{equation}\label{rho-localization diagram}
%\label{Morse localization diagram}
\xymatrix{& H^\udot_T(\M, \M\setminus\M^+; B) \ar[r]\ar[d] & H^\udot_T(\M; B) \ar[r]\ar[d] & 
  H^\udot_T(\M\setminus\M^+; B) \ar[d]\\
0 \ar[r] & \cB(L,L_{>0}) \ar[r] & \cB(L)\ar[r] & \cB(L_{>0})
}  
\end{equation}
where the upper row comes from the long exact sequence of the pair
$(\M, \M\setminus \M^+)$.  
Note that $H^\udot_T(\M,\M\setminus\M^+; B) = H^\udot_T(\M^+; j^!B)$, where
$j \colon \M^+ \hookrightarrow \M$ is the inclusion.

\begin{definition}
We say that $B$ satisfies $\rho$-localization if 
$H^\udot_T(\M; B) \to H^\udot_T(\M\setminus\M^+; B)$
is surjective and the vertical 
maps in \eqref{rho-localization diagram} are isomorphisms.  
\end{definition}

\begin{theorem}\label{Morse group}
The intersection cohomology sheaf $\IC_T(\M)$ satisfies $\rho$-localization.   
\end{theorem}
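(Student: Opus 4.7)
The plan is to deduce the theorem by a diagram chase on \eqref{rho-localization diagram}, provided we can (i) recognize the middle and right vertical maps as isomorphisms and (ii) identify both rows as short exact sequences. Two of these four ingredients are essentially already in place: the middle vertical is an isomorphism by Theorem \ref{main theorem 1}, and the bottom row is short exact because $\cL$ is a minimal extension sheaf, hence flabby by Definition \ref{pure}, so the restriction $\cL(L)\to\cL(L_{>0})$ is surjective with kernel $\cL(L,L_{>0})$ by definition.

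The main step is to show that the right vertical map $\Gamma_{\M\setminus\M^+}\colon H^\udot_T(\M\setminus\M^+;\IC_T(\M))\to\cL(L_{>0})$ is also an isomorphism. I would mimic the proof of Theorem \ref{main theorem 1}, but with $\M$ replaced by the open $T$-invariant subset $\M\setminus\M^+$, whose $T$-stratification is indexed by the subposet $L_{>0}$. Setting $\wt\M^+:=\varpi^{-1}(\M^+)$, the map $\varpi$ restricts to a $T$-equivariant semismall map $\varpi'\colon\wt\M\setminus\wt\M^+\to\M\setminus\M^+$, and $\IC_T(\M)|_{\M\setminus\M^+}$ remains a direct summand of $\varpi'_*\R_{\wt\M\setminus\wt\M^+,T}$ by the open restriction of the decomposition theorem. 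Since the class of objects for which $\Gamma$ is an isomorphism is closed under summands, the task reduces to proving analogs of Propositions \ref{simple case} and \ref{pushforward prop} for the restricted map $\varpi'$.

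The analog of Proposition \ref{pushforward prop} should follow formally from the fact that our localization functor is defined stalk by stalk (Lemma \ref{stalk lemma}) and so commutes with restriction to open unions of strata. The main technical obstacle is the analog of Proposition \ref{simple case}: a combinatorial computation of $H^\udot_T(\wt\M\setminus\wt\M^+)$ identifying it with the sections $\cA_\tcH(\pi^{-1}(L_{>0}))$ of the structure sheaf on the open subposet. My plan is to exploit the Bialynicki-Birula decomposition of the rationally smooth variety $\wt\M$ with respect to $\rc$: the fixed points of $\rc$ are precisely the zero-dimensional strata of $\wt\M$ (the vertices of $\tcH$), and $\wt\M^+$ is the union of those BB cells whose attracting fixed points correspond to chambers of $\tcH$ on which $d\rho$ attains its minimum. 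Removing these cells leaves an equivariantly formal open subspace, and by adapting the face-ring computation used in Proposition \ref{simple case} (essentially restricting the identification $H^\udot_T(\wt\M)\cong\R[\Delta_\tcH]$ to the corresponding subcomplex of $\Delta_\tcH$), one obtains the desired identification.

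With the right vertical isomorphism in hand, commutativity of \eqref{rho-localization diagram} together with surjectivity of $\cL(L)\to\cL(L_{>0})$ forces $H^\udot_T(\M;\IC_T(\M))\to H^\udot_T(\M\setminus\M^+;\IC_T(\M))$ to be surjective, so the top row of \eqref{rho-localization diagram} becomes short exact as well. The five-lemma then yields the left vertical isomorphism, completing the proof.
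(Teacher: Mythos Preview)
Your overall architecture---reduce to the rationally smooth case by passing to a summand of $\varpi_*\R_{\wt\M,T}$, use the Bialynicki--Birula cells of $\wt\M$, and finish with a five-lemma on \eqref{rho-localization diagram}---matches the paper. The difference is which vertical arrow you attack directly. The paper goes after the \emph{left} vertical map; you go after the right.

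The paper's choice is substantially simpler. In the simple case one has $\M^+=\bigcup_i C_i$ where $C_i$ is the attracting cell of the fixed point $x_i$, and by excision each $H^\udot_T(\M_i,\M_{i-1})$ is a free $H^\udot_T(pt)$-module of rank one generated in degree $2d$. Hence $H^\udot_T(\M,\M\setminus\M^+)$ is free of rank $r=|\M^T|$ in degree $2d$, and so is $\cA(L,L_{>0})$ (its basis is given by the monomials $e_F$ for maximal flats $F$). The left vertical map is then an injection between free modules of the same rank and degree, hence an isomorphism; the five-lemma yields the right vertical. No computation of $H^\udot_T(\M\setminus\M^+)$ is needed.

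Your route has a genuine error and a gap. First, your description of $\wt\M^+$ is wrong: by definition $\M^+=\{x:\lim_{t\to 0}\rc(t)x\ \text{exists}\}$, so $\wt\M^+$ is the union of \emph{all} BB attracting cells, one for each $T$-fixed point, not just those corresponding to ``chambers on which $d\rho$ attains its minimum'' (you are conflating the extended-core description of $\M^\pm$ earlier in the section with the BB decomposition). Second, even with the correct description, you still owe an identification $H^\udot_T(\wt\M\setminus\wt\M^+)\cong\cA_\tcH(\wt L_{>0})$. Equivariant formality of this open complement is not automatic, and ``restricting the face-ring identification to a subcomplex'' is not a proof; the isomorphism $H^\udot_T(\wt\M)\cong\R[\Delta_\tcH]$ comes from a specific argument that does not obviously restrict. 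In the paper, this identification is obtained only \emph{after} the left vertical is established, as a consequence of the five-lemma, not as an input to it.
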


\begin{proof}
This property is preserved under taking direct summands, so it is 
enough to show that $\varpi_*\R_{\wt\M,T}$ satisfies 
$\rho$-localization.  Since $\varpi$ is proper and we have the identities
$$\wt\M^+ = \varpi^{-1}(\M^+)\,\,\,\text{and}\,\,\,\pi^{-1}(L_{>0}) = \wt{L}_{>0},$$
this is equivalent to showing that the sheaf $\R_{\wt\M,T}$ on $\wt\M$
satisfies $\rho$-localization.
Thus we have reduced the proof to the case where $\cH$ is simple,
so $\M$ is rationally smooth and $\IC_T(\M)$ is the equivariant constant sheaf.

In this case, \eqref{rho-localization diagram} becomes the following diagram.
\[
\xymatrix{& H^\udot_T(\M, \M\setminus\M^+) \ar[r]\ar[d] & H^\udot_T(\M) \ar[r]\ar[d] & 
  H^\udot_T(\M\setminus\M^+) \ar[d]& \\
0 \ar[r] & \cA(L,L_{>0}) \ar[r] & \cA(L)\ar[r] & \cA(L_{>0})\ar[r] & 0
}  
\]
The bottom row is right 
exact because $\cA$ is flabby.  By Theorem \ref{main theorem 1}
the middle vertical map is an isomorphism.
It is enough to show that the left vertical map
is an isomorphism, since this will imply the vanishing of the connecting homomorphisms for
the pair $(\M, \M\setminus \M^+)$, and the five-lemma will take care of the rest.

Let $\M^T = \{x_1,\dots,x_r\}$,
and let $F_i$ denote the flat of $\cH$ for which $S_{F_i} = \{x_i\}$.
Let $$C_i := \{x \mid \lim_{t\to 0} \rho(t)x = x_i\},$$ so that $\M^+ = \bigcup_{i=1}^r C_i$.
Each $C_i$ is an $2d$-dimensional rational homology cell
(a quotient of $\C^d$ by a finite group).  Choose an ordering
of the $x_i$ so that $\M_i = \M \setminus \bigcup_{j > i} C_j$ is
open for all $i$; thus $\M_0 = \M\setminus \M^+$ and $\M_r = \M$.  
By excision, $H^\udot_T(\M_i, \M_{i-1})$ is isomorphic to 
$H^\udot_T(D_i, D_i \setminus C_i)$, where $D_i$ is a quotient
of $\C^{2d}$ by a finite group and the inclusion $C_i \hookrightarrow D_i$
comes from a coordinate inclusion $\C^d \hookrightarrow\C^{2d}$. 
Thus it is a free $H^\udot_T(pt)$-module of rank 1, generated in degree $2d$.

It follows that $H^\udot_T(\M,\M\setminus\M^+)$ is a free $H^\udot_T(pt)$-module
of rank $r$ generated in degree $2d$, and that the map to 
$H^\udot_T(\M)$ is an inclusion.  This implies that the left 
vertical map is injective, and since $\cA(L, L_{>0})$ is 
also free of rank $r$ and generated in degree $2d$,
this proves the theorem. 
\end{proof}

\vspace{-\baselineskip}
\begin{remark}
When $\cH$ is a central arrangement, so the hypertoric variety
$\M_\cH$ has a unique $T$-fixed point $p$, the functor
$B \mapsto H^\udot_T(\M, \M\setminus\M^+; B)$ is the cohomology
of an equivariant version of the hyperbolic localization
functor explored in \cite{Br}.  It can also be viewed as
the Morse group of $B$ for the stratified Morse function
obtained by composing $d\rho\in V^*$ with the real moment
map $\mu_\R\colon\M_\cH \to V$.

Essentially the same functor was used by Mirkovic and Vilonen
in \cite{MV} to give the weight space in their construction
of the geometric Satake correspondence identifying IC sheaves
of Schubert varieties in the loop Grassmannian of a reductive group $G$ 
with representations of the 
Langlands dual group $G^\vee$.  They were able to prove the
main properties of this functor without using the decomposition
theorem; since the singularities of hypertoric varieties have
many similarities to singularities of loop Grassmannian
Schubert varieties, we hope that similar ideas may make it
possible to prove Theorem \ref{main theorem 1} and Theorem
\ref{Morse group} without using the decomposition theorem.
\end{remark}
\end{section}

\section{Lifting the ring structure to the derived category} 
\label{lifting ring structure}
%{\em (some of this might be better in the introduction)}
Let $\cH$ be a unimodular central arrangement.
In \cite[6.4]{PW}, it was conjectured that there is a natural isomorphism of $\Sym V_0$-modules between $\IH_T^\udot(\M)$
and $R(\cH)$, and that this isomorphism may be interpreted as an intersection pairing.  In
Corollary \ref{it's a ring} we have already produced such a natural isomorphism, but it is not clear what our
isomorphism has to do with intersection theory.  
%Furthermore, it is not yet clear that $R(\cH)$ is the ``correct'' ring;
%after all, there is also a natural isomorphism of $\Sym V_0$-modules between $\IH_T^\udot(\M)$
%and $R^{bc}(\cH)$ (see Proposition \ref{broken circuit MES} and Remark \ref{how canonical}), which is a different ring.
This is partly explained by the following theorem, which is the main result of the remainder of the paper.

Let $\M = \M_\cH$ be a hypertoric variety defined by a {\em unimodular} central
arrangement $\cH$.  
Let $\IC = \IC_{\M,T}$ be the equivariant intersection cohomology sheaf, and let
$u:\R_{\M,T}\to\IC$ be the natural map.

\begin{theorem}\label{IC is ring object}
The object $\IC$ can be made into a commutative ring object in $D^b_T(\M)$ with unit $u$.
More precisely, there is a commutative and associative
morphism 
\[m\colon \IC \otimes \IC \to \IC\]
such that the natural map 
$$\IC \,\,\cong\,\, \R_{\M,T} \otimes \IC \,\overset{u\otimes id}{\longrightarrow}\, 
\IC \otimes \IC \,\overset{m}{\longrightarrow}\, \IC$$
is the identity.  This ring structure is unique.

Applying the localization functor
$\Loc$ to this ring structure gives the ring structure on the minimal extension sheaf $\cL$ 
coming from Theorem \ref{main theorem 1} and Proposition \ref{psw}.  
In particular, the ring structure induced by $m$ on $\IH_T^\udot(\M)$ is that of Corollary \ref{it's a ring}.
\end{theorem}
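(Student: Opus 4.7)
The strategy is to construct the multiplication on $\IC$ by transport along the canonical resolution $\varpi\colon \wt\M \to \M$ (where $\wt\M = \M_{\tcH}$) provided by Proposition \ref{resolution}. Since $\cH$ is unimodular, $\wt\M$ is smooth, so $\R_{\wt\M,T} = \IC_{\wt\M,T}$ carries the canonical commutative, associative cup-product structure with unit the identity. Applying $\varpi_*$ produces a ring object $(\varpi_*\R_{\wt\M,T}, m_\varpi)$ in $D^b_T(\M)$ whose unit is the adjunction morphism $\eta\colon \R_{\M,T}\to \varpi_*\R_{\wt\M,T}$. By the semismallness of $\varpi$ and the equivariant decomposition theorem of \cite{BerLu}, we obtain a splitting $\varpi_*\R_{\wt\M,T}\cong \IC \oplus \mathcal K$, where $\mathcal K$ is a direct sum of shifts of $\IC$-sheaves supported on proper closed strata. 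Choose $\iota\colon\IC\hookrightarrow \varpi_*\R_{\wt\M,T}$ and $p\colon \varpi_*\R_{\wt\M,T}\twoheadrightarrow \IC$ with $p\circ\iota = \id$, and set $m := p\circ m_\varpi\circ (\iota\otimes\iota)$. Commutativity and associativity descend from those of $m_\varpi$. For the unit axiom, $p\circ\eta = u$ because both are morphisms $\R_{\M,T}\to\IC$ whose restriction to the open stratum $S_\emptyset$ is the identity, and morphisms to $\IC$ are determined by their restriction to the smooth locus (the defining property of a minimal extension); hence $m\circ(u\otimes\id) = p\circ m_\varpi\circ(\eta\otimes\iota) = p\circ\iota = \id$.

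For uniqueness, suppose $m_1, m_2$ are two such ring structures, with difference $\delta = m_1 - m_2$. Then $\delta\circ(u\otimes\id) = 0 = \delta\circ(\id\otimes u)$, and restricting to the open stratum $S_\emptyset$ where $\IC = \R_{S_\emptyset,T}$ gives $\delta|_{S_\emptyset} = 0$. The idea is then to argue by induction on strata: applying $\Loc$ yields an $\cA$-linear map $\bar\delta\colon \cL\otimes_\cA\cL\to\cL$ annihilating $1\otimes s$ and $s\otimes 1$, and stalkwise at each flat $F$ this is a map $\cR(\cH_F)\otimes_{\cA(F)}\cR(\cH_F)\to \cR(\cH_F)$ annihilating tensors involving the unit. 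Since $\cR(\cH_F)$ is generated as a ring in degree two (by the presentation in Section \ref{defining R}), a short induction on $\rank F$ using the decomposition of Lemma \ref{Combinatorially semi-small}(c) and the rigidity of $\cL$ (Corollary \ref{rigid MES}) forces $\bar\delta = 0$. To promote $\Loc(\delta)=0$ to $\delta = 0$, we exploit the fact that $\IC\otimes\IC$ is a summand of $\varpi_*\R_{\wt\M,T}\otimes \varpi_*\R_{\wt\M,T}$, on which localization is well-behaved: the K\"unneth morphism combined with the smoothness of $\wt\M$ shows that $\Loc$ is faithful on the relevant degree-zero Hom group.

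Granted uniqueness, independence of the chosen splitting is automatic, and the final assertion of the theorem is direct: $\Loc$ intertwines $m_\varpi$ with the natural face-ring multiplication on $\cE\cong \pi_*\cA_{\wt\cH}$, and the restriction of this algebra structure to the $\cL$-summand agrees, by the same uniqueness applied at the level of minimal extension sheaves, with the $\cR$-structure from Proposition \ref{psw}. The principal obstacle is the last step of uniqueness, namely the faithfulness of $\Loc$ on $\Hom^0(\IC\otimes\IC,\IC)$: stalkwise faithfulness is immediate from Lemma \ref{stalk lemma}, but ruling out phantom morphisms that vanish on every $T$-orbit is delicate. A plausible route, echoing Theorem \ref{Morse group}, is to invoke a $\rho$-localization argument with a generic cocharacter of $T$ to reduce the question to the smooth resolution $\wt\M$, where the relevant Hom group is computed by equivariant cohomology and the faithfulness of $\Loc$ is manifest.
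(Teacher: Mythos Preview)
Your argument has a genuine gap at the very first step: associativity does \emph{not} descend to a direct summand. With $m := p\circ m_\varpi\circ(\iota\otimes\iota)$ you get commutativity for free, but
\[
m\circ(m\otimes\id) \;=\; p\circ m_\varpi\circ\bigl((\iota p)\, m_\varpi(\iota\otimes\iota)\otimes \iota\bigr),
\]
and the idempotent $\iota p\neq \id_E$ sits in the middle. For this to equal $m\circ(\id\otimes m)$ you would need the complement $\mathcal K$ to be a two-sided ideal for $m_\varpi$, or at least that $p\circ m_\varpi$ kills $\mathcal K\otimes\iota(\IC)$. Nothing in the semismall decomposition guarantees this; the summands $\IC_{\ol{S_F},T}[-2\rk F]$ for $F\neq\emptyset$ can and do interact nontrivially with $\IC$ under cup product on $\varpi_*\R_{\wt\M,T}$. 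So ``associativity descends'' is not a proof, and this is exactly the step where the paper has to work hard.

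The second gap you already flag yourself: faithfulness of $\Loc$ on $\Hom^0(\IC\otimes\IC,\IC)$. You propose a $\rho$-localization argument but do not carry it out, and it is not clear it can be made to work. The paper is explicit that $\Loc$ loses too much information here (the decomposition of $\cE=\Loc\varpi_*\R_{\wt\M,T}$ is not canonical, while that of $\varpi_*\R_{\wt\M,T}$ is), and this is precisely why Section~\ref{lifting ring structure} builds a refined localization $\Loch$ to a richer poset $\wh\sS$ keeping track of cohomology along whole strata and pairs of strata. Under parity-vanishing hypotheses (verified for hypertoric varieties in Section~\ref{hypertoric structure sheaf}), Theorem~\ref{localized sheaves} shows $\Loch$ is \emph{fully faithful} on the relevant Hom groups. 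The multiplication is then constructed not by projecting from $\varpi_*\R_{\wt\M,T}$ but by writing down an explicit commutative $\wh\cA$-algebra $\wh\cR$ (Section~\ref{hypertoric GMES}), proving it is a generalized minimal extension sheaf, and lifting the algebra structure back to $\IC$ via full faithfulness (Theorem~\ref{lift}). Uniqueness and the comparison with $\cR$ then follow from rigidity of the GMES together with a direct comparison of $\Loc$ and $\Loch$. Your resolution-and-project idea is natural, but without an argument controlling the $\mathcal K$-interactions it does not give associativity, and without a faithfulness result for $\Loc$ it does not give uniqueness.
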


\begin{remark}
A ring structure on an object in the derived category
induces a ring structure on $H^\udot_T(Y; \IC|_Y)$ for any
$T$-invariant subspace $Y \subset M$, and it does so in a functorial way.  In particular, the
restriction map
\begin{equation}\label{restriction to open stratum}
R(\cH) \cong \IH^\udot_T(\M) \to H^\udot_T(S_\emptyset)
\end{equation}
is a ring homomorphism, where the target has the usual
ring structure.  Thus Theorem \ref{IC is ring object} provides
another, deeper sense in which our ring structure can be called
natural, at least for unimodular arrangements.
In contrast, the analogous natural map from
$R^{bc}(\cH)$ to $H^\udot_T(S_\emptyset)$ is {\em not} a ring homomorphism.
\end{remark} 

\begin{remark}
The fact that we need to assume $\cH$ is unimodular 
is somewhat puzzling, and deserves some explanation.  The
intersection cohomology groups we are calculating are
taken with rational coefficients, and their dimensions
are independent of any lattice structure.  In fact, 
the results of Section \ref{Lattice of flats} 
hold over an arbitrary field of characteristic zero,
even if there is no hypertoric variety in the picture.
This is similar to the situation for toric varieties, where
a theorem of Karu \cite{Ka} (see also \cite{BreLu2}) can be used 
to show that statements about intersection cohomology or even 
more general equivariant perverse sheaves on toric varieties 
can be proved for fans without any rationality hypothesis (see \cite[\S5 and \S6]{BraLu} for example).
 
However, something unexpected happens for hypertoric varieties.  
The action of $T$ on the open stratum $S_\emptyset$ is
quasi-free, and the quotient $S_\emptyset/T$ is homeomorphic to the 
complement in $\hkv$ of the codimension $3$ subspaces
$\hkhi$ (see the notation in \S\ref{Hypertoric basics}).
The cohomology of this space is given by \cite[5.6]{dLS}
as a quotient of a polynomial ring where among the relations
are $\sum_{i \in C} \sgn(a_i)\be^{}_{C\setminus \{i\}}$ where
$\sum_{i\in C} a_i x_i|_V = 0$ is the relation for a circuit $C$.  For 
general $\cH$ this
differs from the relation $\sum a_i\be^{}_{C\setminus\{i\}}$ which 
holds in the ring $R(\cH)$, but it is the same if $\cH$ is unimodular.  This means
that \eqref{restriction to open stratum} cannot be a ring homomorphism
(with the ring structure we have described on $\IH^\udot_T(\M)$)
if $\cH$ is not unimodular, and so Theorem \ref{IC is ring object} cannot
hold.  Note that for some non-unimodular arrangements 
there may still exist a ring structure
on $\IC$, 
% for instance if $\cH$ consists of $d+1$ hyperplanes in 
% $\R^d$ parallel to the faces of a simplex, 
but the resulting ring structure on
$\IH^\udot_T(\M)$ will not agree with that of Corollary \ref{it's a ring}.

We do not have a good explanation for this situation, but since 
non-unimodularity of $\cH$ is equivalent to the orbifold resolution
$\M_{\wt\cH}$ having singular points, we speculate that there may
be corrections coming from orbifold cohomology which would make
a statement like Theorem \ref{IC is ring object} possible for
general rational arrangements.
\end{remark}

Our strategy for proving Theorem \ref{IC is ring object} is to 
compute the multiplication map $m$ in terms of its localization in
a combinatorial category of sheaves, and then lift this combinatorial
multiplication to a derived category morphism.  We already have
a ring structure on $\Loc \IC_{\M,T}$, but the functor $\Loc$
is not fully faithful, so we cannot lift this ring structure to
the derived category.  The problem can be seen, for instance,
by noting that the decomposition
of $\cE = \Loc \varpi_*\R_{\M_{\wt\cH,T}}$ is not canonical, whereas
the decomposition of $\varpi_*\R_{\M_{\wt\cH,T}}$ into intersection
cohomology sheaves is canonical, since $\varpi\colon \M_{\wt\cH}
\to \M_\cH$ is semismall.  

To solve this problem we construct a richer localization
$\Loch$ which takes cohomology of a derived category object along
entire strata rather than single $T$-orbits.  
We describe this
functor in Section \ref{refined localization} and show that it completely captures 
homomorphisms between objects satisfying a parity vanishing condition.
In Section \ref{GMES} we describe $\Loch(\IC)$, and show that under suitable 
hypotheses it is a ``generalized minimal extension sheaf'' or GMES, and 
that a ring structure on a GMES
induces a ring structure on $\IC$.  All of this is done
for a general stratified $T$-space satisfying certain parity vanishing
conditions; we hope that there will be other interesting spaces
satisfying our hypotheses.  In Sections \ref{hypertoric structure sheaf} and 
\ref{hypertoric GMES} we work out the specific case
of hypertoric varieties and construct the required
ring structure, thus proving Theorem \ref{IC is ring object}.

\subsection{Stratum-by-stratum Homs in derived categories}
\label{stratum-by-stratum}
Computing Hom-spaces between objects in the derived category 
(or equivariant derived category) is often difficult, in part 
because the derived category is not a stack: in general morphisms cannot be 
built from local data.  In some special cases, however, morphisms can
be localized.  This happens for instance in the subcategory of 
perverse sheaves.   
We will focus on another such case, in which the objects satisfy strong 
parity vanishing conditions along strata.  We show that homomorphisms
between such objects can be described by their restriction to strata.

We start with a connected $T$-space $\M$, 
endowed with a finite $T$-decomposition $\sS$.  
%We will consider $\sS$ as a finite poset with the 
%ordwer relation given by $R \le S \,\iff\, R \subset \overline S$. 
For any locally closed union $\N$ of strata, let $j_\N\colon \N \to \M$ denote
the inclusion.
We make the following assumptions on $\sS$.  The first two are exactly the
same as the first two assumptions from Section \ref{localization functor}. 
\begin{itemize}
\item[(A)] $\sS$ is a $T$-stratification in the sense of Definition \ref{T-stratification}.
\item[(B)] For all $S \in \sS$, the quotient $S/T$ is simply connected (so $T$-equivariant 
local systems on $S$ are trivial).
% \item[(B)] for all $S \in \sS$,  $H^k_T(S) = 0$ for $k$ odd, and
\item[(C')] For any $R, S\in \sS$ with $R \subset \ol{S}$, we have
$H^k_T(R; j_{R}^*j_{S*}\R_{S,T}) = 0$ for $k$ odd.
\end{itemize}

In particular, when $R=S$,
(C') says that the odd equivariant cohomology groups of $S$ itself vanish.
\begin{remark} \label{tubular neighborhood remark}
The cohomology groups in (C') can be described more geometrically
as follows.  Suppose that there exists a $T$-invariant open neighborhood 
$U = U_R$ of $R$ and a $T$-equivariant deformation retraction $U \times [0,1] \to U$ 
onto $R$ so that for all points $p$ and $t \in [0,1)$, the image of $(p,t)$
is contained in the same stratum as $p$ (the assumption that $\sS$ is a $T$-stratification
implies that such a neighborhood exists locally near every orbit in $R$).
Then there is an isomorphism
\[H^\udot_T(R; j_{R}^*j_{S*}\R_{S,T}) \cong H^\udot_T(U \cap S).\]
\end{remark}

%Note that (C) is
%only possible in the equivariant situation, since if $T$ is trivial, this cohomology
%must have odd-degree components.

We want to consider objects in the equivariant derived category of 
$\M$ whose cohomology along each stratum 
vanishes in odd degrees.  First consider the case of a single stratum
$S$.  Let $D^{ev}_T(S)$ be the full subcategory of $D^b_T(S)$ consisting of objects
whose cohomology sheaves are locally constant (hence constant by (B)) and 
vanish in odd degrees.

\medskip
\begin{lemma} \label{parity vanishing}
The following statements hold for any object $B$ of $D^{ev}_T(S)$.
\begin{enumerate}
\item[(a)] The equivariant cohomology $H^\udot_T(S; B)$ is a free $H^\udot_T(S)$-module
which vanishes in odd degrees.  For any $x\in S$, restricting cohomology induces
isomorphisms
\[H^\udot_T(S;B) \otimes_{H^\udot_T(S)} H^\udot_T(Tx) \cong H^\udot_T(Tx; B),\; \text{and}\]
\[H^\udot_T(S;B) \otimes_{H^\udot_T(S)}\R \cong H^\udot(\{x\}; B).\]
%and
%\[H^\udot_T(S;B) \otimes_{H^\udot_T(S)} H^\udot_T(Tp) \cong H^\udot_T(Tp; B).\]
\item[(b)] For any stratum $R \in \sS$, the natural map
\[H^\udot_T(S; B) \otimes_{H^\udot_T(S)} H^\udot_T(R; j_R^*j^{}_{S*}\R_{S,T}) \to 
H^\udot_T(R; j_R^* j^{}_{S*}B)\]
is an isomorphism (in particular the right hand side vanishes in odd degrees).
\item[(c)] For any $C\in D^b_T(S)$, the natural maps
\[\Hom^\udot_{D^b_T(S)}(B, C) \to \Hom_{H^\udot_T(S)}(H^\udot_T(S;B), H^\udot_T(S;C))\]
and
\[H^\udot_T(S;B \otimes C) \to H^\udot_T(S;B) \otimes_{H^\udot_T(S)} H^\udot_T(S;C)\]
are isomorphisms.
%\item[(d)] Suppose that $Y \subset S$ is a closed $T$-invariant submanifold.
%  Then
%the natural map
%\[H^\udot_T(S; A) \otimes_{H^\udot_T(S)} H^\udot_T(Y)\to H^\udot_T(Y; A|_Y)\]
%is an isomorphism.
\end{enumerate}
 \end{lemma}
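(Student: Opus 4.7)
The plan is to reduce all three statements to a single structural fact: every object $B \in D^{ev}_T(S)$ admits a (non-canonical) isomorphism
\[
B \;\simeq\; \bigoplus_{q} V_q \otimes_{\R} \R_{S,T}[-q] \qquad \text{in } D^b_T(S),
\]
where $V_q$ is the fiber of $\mathcal{H}^q(B)$ at any point and the sum ranges over even $q$. Once this splitting is in hand, parts (a), (b), (c) all become formal bookkeeping.

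To produce the splitting, first note that hypothesis (B) forces each cohomology sheaf $\mathcal{H}^q(B)$, being a $T$-equivariant local system, to be constant and of the form $V_q \otimes_\R \R_{S,T}$ for a finite-dimensional vector space $V_q$ (with $V_q = 0$ for $q$ odd). We then split $B$ one cohomology sheaf at a time, by induction on the cohomological length of $B$. Let $q_0$ be the smallest degree in which $B$ has nonzero cohomology and consider the truncation triangle
\[
\mathcal{H}^{q_0}(B)[-q_0] \;\longrightarrow\; B \;\longrightarrow\; \tau_{>q_0} B \;\stackrel{+1}{\longrightarrow}\, .
\]
By induction $\tau_{>q_0} B$ is already a direct sum of shifted constant sheaves, and the obstruction to splitting off $\mathcal{H}^{q_0}(B)[-q_0]$ is the connecting map, an element of $\Hom_{D^b_T(S)}(\tau_{>q_0} B,\, \mathcal{H}^{q_0}(B)[1-q_0])$. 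Using the inductive decomposition, this group decomposes into pieces of the form $V_{q'}^* \otimes V_{q_0} \otimes H^{q'-q_0+1}_T(S)$ with $q', q_0$ both even, so that $q' - q_0 + 1$ is odd. Hypothesis (C') applied with $R = S$ (using $j_S^* j_{S*} \R_{S,T} = \R_{S,T}$) says exactly that $H^{\mathrm{odd}}_T(S) = 0$, so the obstruction vanishes and the splitting exists.

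With the decomposition available, the three statements follow directly. For (a), it gives $H^\udot_T(S; B) \cong \bigoplus_q V_q \otimes H^{\udot-q}_T(S)$, which is manifestly free over $H^\udot_T(S)$ and concentrated in even degrees, and the two base-change isomorphisms follow because the same decomposition computes $H^\udot_T(Tx; B)$ and $H^\udot(\{x\};B)$ directly. For (b), the decomposition identifies
\[
H^\udot_T(R; j_R^* j^{}_{S*} B) \;\cong\; \bigoplus_q V_q \otimes H^{\udot-q}_T(R; j_R^* j^{}_{S*} \R_{S,T}),
\]
and freeness from (a) identifies this with $H^\udot_T(S; B) \otimes_{H^\udot_T(S)} H^\udot_T(R; j_R^* j^{}_{S*}\R_{S,T})$. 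Part (c) reduces to checking both assertions on a single summand $\R_{S,T}[-q]$, where $\Hom^\udot_{D^b_T(S)}(\R_{S,T}[-q], C) = H^{\udot+q}_T(S; C)$ matches the target of the first map, and the tensor formula reduces to the evident computation $H^\udot_T(S; \R_{S,T}[-q] \otimes C) \cong H^{\udot - q}_T(S; C)$.

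The main obstacle is the splitting step: one must carefully identify the obstruction groups with odd-degree equivariant cohomology of $S$ and invoke (C'), and verify that the obstruction-theoretic induction actually assembles into a genuine direct sum decomposition in $D^b_T(S)$ rather than merely a chain of split triangles. The deductions of (a)--(c) from the splitting are then purely formal.
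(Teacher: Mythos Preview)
Your argument is correct, but it takes a different route from the paper.  The paper does \emph{not} split $B$ into a direct sum of shifted constant sheaves.  Instead it runs the same induction on cohomological length using the truncation triangle $\tau_{<2m}B \to B \to \tau_{\ge 2m}B$, but rather than showing the connecting map vanishes, it simply takes equivariant cohomology and observes that in the resulting long exact sequence the outer terms are free $H^\udot_T(S)$-modules concentrated in even degrees (by induction), which forces the middle term to be free and even as well.  Parts (b) and (c) are then handled by the same induction together with the five-lemma, using the freeness from (a) to control the tensor products.

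Your approach buys a stronger conclusion---an actual decomposition of $B$ in $D^b_T(S)$---at the cost of having to identify the obstruction group and invoke (C') to kill it.  The paper's approach is more economical: it never needs to know that the triangle splits, only that the long exact sequence degenerates for parity reasons, and so it avoids the obstruction calculation entirely.  Either method is perfectly valid here; the paper's is a bit slicker, while yours makes the subsequent deductions of (a)--(c) completely transparent.
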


\begin{proof} 
First we prove (a).  
Let $[2l,2m]$ be the largest interval on which $H^\udot(B)$ is supported;
we proceed by induction on the difference $m-l$.
 If $m-l = 0$, then $B$ is
a $T$-equivariant local system placed in degree $2m$.
Our condition (B) says it must be a constant local system, 
in which (a) is obvious.
Otherwise,
consider the distinguished triangle $\tau_{<2m}B \to B \to \tau_{\ge 2m}B$.
The statement holds for $\tau_{<2m}B = \tau_{<2m-1}B$ and $\tau_{\ge 2m}B =
H^{2m}(B)[-2m]$ by the inductive hypothesis.  
Taking equivariant cohomology of this triangle gives a 
long exact sequence; since the extreme terms are free modules vanishing
in odd degrees, the middle is as well.  

The other statements are now proved by a similar induction, using
the five-lemma and the freeness of $H^\udot_T(S;B)$.
\end{proof}

When the space $\M$ has multiple strata, we consider sheaves which satisfy 
parity vanishing along each stratum.

\begin{definition}  Let $B \in D^b_T(\M)$. 
% be an object which is constructible with respect to the stratification $\sS$.  
We say that $B$ has ``$*$-parity vanishing'' (respectively
``$!$-parity vanishing'') if  $j^*_S B$ (resp.\ $j^!_S B$) is in $D^{ev}_T(S)$ for all $S \in \sS$.
Let $D^*_T(\M)$ (respectively $D^!_T(\M)$) denote the full subcategory of $D^b_T(\M)$
consisting of such objects.
\end{definition}

Note that $*$-parity vanishing is equivalent to $\sS$-constructibility 
plus the vanishing of the ordinary cohomology sheaves in odd degrees. 
The constant equivariant sheaf $\R_{\M,T}$ has $*$-parity vanishing, 
but does not have $!$-parity vanishing in general unless $\M$ is smooth.  
If $\N$ is a locally closed union of strata of $\M$
and $j\colon \N\to \M$ is the inclusion, then $j^*$ and
$j_!$ preserve $*$-parity vanishing, while $j^!$ and $j_*$ 
preserve $!$-parity vanishing. 

\begin{lemma} \label{shriek and star} If $B\in D^b_T(\M)$ is $\sS$-constructible and
satisfies $!$-parity vanishing, then for any stratum $S$ the equivariant cohomology 
$H^\udot_T(S; j_S^*B)$ vanishes in odd degrees. 
\end{lemma}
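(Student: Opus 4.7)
The plan is to proceed by induction on the number of strata of $\M$, and to simultaneously establish the auxiliary statement that $H^\udot_T(\M;B)$ vanishes in odd degrees. The base case of a single stratum is immediate from Lemma \ref{parity vanishing}(a) applied to $B=j_S^!B\in D^{ev}_T(S)$.

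For the inductive step of the auxiliary statement, pick a closed stratum $R\subset\M$ (which exists by finiteness of the poset) and set $U=\M\smallsetminus R$, an open union of strata with fewer strata than $\M$. The long exact sequence in $H^\udot_T(\M;-)$ obtained from the attaching triangle $j_{R*}j_R^!B\to B\to Rj_{U*}j_U^*B\to[1]$ has outer terms $H^\udot_T(R;j_R^!B)$, which vanishes in odd degrees by the hypothesis combined with Lemma \ref{parity vanishing}(a), and $H^\udot_T(U;B|_U)$, which vanishes in odd degrees by the inductive hypothesis (noting that $B|_U$ inherits $!$-parity vanishing, since $!$-pullback commutes with pullback along the open inclusion $U\hookrightarrow\M$).

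For the lemma itself, fix a stratum $S$. If $S$ is not closed in $\M$, choose any closed stratum $R\neq S$, set $W=\M\smallsetminus R$, and apply the lemma inductively to the stratum $S$ of $W$ and the sheaf $B|_W$. If $S$ is closed, let $U=\M\smallsetminus S$ and apply $j_S^*$ to the triangle $j_{S*}j_S^!B\to B\to Rj_{U*}j_U^*B\to[1]$; the identity $j_S^*j_{S*}=\mathrm{id}$ for a closed embedding produces a triangle $j_S^!B\to j_S^*B\to K\to[1]$ on $S$, with $K=j_S^*Rj_{U*}j_U^*B$. Since $H^\udot_T(S;j_S^!B)$ already vanishes in odd degrees, it suffices to prove the same for $H^\udot_T(S;K)$. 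Using a stratified tubular neighborhood $U_S\cong T\times_{T_S}\cone(L)\times D$ guaranteed by (A), a base-change computation in the spirit of Remark \ref{tubular neighborhood remark} identifies
$$H^\udot_T(S;K)\cong H^\udot_T(U_S\smallsetminus S;B)\cong H^\udot_{T_S}(L;B|_L)$$
after contracting the equivariantly trivial factors $(0,1)$ and $D$. Since the link $L$ is a $T_S$-stratified space whose strata are in bijection with strata of $\M$ strictly above $S$, it has strictly fewer strata than $\M$, and applying the auxiliary statement inductively to $(L,B|_L)$ completes the proof.

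The principal technical obstacle is verifying that the hypotheses transfer to the link $L$: namely, that $L$ still satisfies conditions (A), (B), and (C'), and that $B|_L$ inherits $!$-parity vanishing. These checks amount to unwinding the local product decomposition $R\cap U_S\cong T\times_{T_S}(\Lambda\times(0,1))\times D$ for each stratum $\Lambda\subset L$ corresponding to a stratum $R\subset\M$ adjacent to $S$, and using condition (C') to ensure that any shifts arising from the transverse factors are even, so that parity vanishing is preserved under the identification of $j_\Lambda^!(B|_L)$ with a shifted version of $j_R^!B$.
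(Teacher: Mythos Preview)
Your auxiliary statement and its inductive proof are fine, and your reduction of the non-closed case is correct. The gap is in the closed-stratum case: the identification $H^\udot_T(S;K)\cong H^\udot_{T_S}(L;B|_L)$ does not hold in general. The tubular neighborhood $U_S$ provided by condition~(A) is local --- it surrounds a single orbit $Tp$, not the whole stratum $S$ --- so even granting Remark~\ref{tubular neighborhood remark} you would at best obtain $H^\udot_T(W\setminus S;B)$ for some \emph{global} tubular neighborhood $W$ of $S$ (whose existence is not guaranteed by~(A)), and $W\setminus S$ is then a bundle over $S/T$ with link-type fibers, not just~$L$. When $S/T$ is not a point, the cohomology $H^\udot_T(S;K)$ involves the topology of $S$ as well as that of $L$, and your formula loses this. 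Your final paragraph, where you attempt to transfer the hypotheses to $L$, is also only a sketch: verifying that $L$ satisfies (C') and that $B|_L$ inherits $!$-parity vanishing would require relating the global condition (C') on $\M$ to a local statement on links, and this is not carried out.

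The paper's proof avoids all of this by filtering $B$ rather than $\M$. One chooses an ordering $S_1,\dots,S_r$ of the strata with each $U_k=S_1\cup\cdots\cup S_k$ open, and sets $B_k=f_{k*}f_k^*B$ for $f_k\colon U_k\hookrightarrow\M$. The triangles $j_{S_k*}j_{S_k}^!B\to B_k\to B_{k-1}$ reduce the claim to showing that $H^\udot_T(S;j_S^*j_{S_k*}j_{S_k}^!B)$ vanishes in odd degrees for each $k$. Since $j_{S_k}^!B\in D^{ev}_T(S_k)$ by hypothesis, Lemma~\ref{parity vanishing}(b) identifies this group with $H^\udot_T(S_k;j_{S_k}^!B)\otimes_{H^\udot_T(S_k)}H^\udot_T(S;j_S^*j_{S_k*}\R_{S_k,T})$, and condition~(C') kills the odd part of the second tensor factor. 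This uses~(C') directly and never descends to the link; it is both shorter and stays entirely within the framework already set up.
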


\begin{proof}
Choose an ordering $S_1, \dots, S_r$ of the strata so that 
$U_k = \bigcup_{i=1}^k S_i$ is open in $\M$ for all $k=1,\dots,r$, and
let $B_k = f_{k*}f_k^* B$, where $f_k\colon U_k \to \M$ is the inclusion.
In particular $B_0 = 0$ and $B_r = B$.   
Since $j_{S_k}^!B \in D^{ev}_T(S_k)$, Lemma \ref{parity vanishing}(b) implies that 
$H^\udot_T(S;j_S^*j^{}_{S_k*}j_{S_k}^!B)$ vanishes in odd degrees.
The lemma follows by induction on $k$ using the distinguished triangles
$j^{}_{S_k*}j_{S_k}^!B  \to B_k \to B_{k-1}$.  
\end{proof}

For any $T$-space $X$ and objects $B,C \in D^b_T(X)$, 
let $$\Hom^k_X(B,C) = \Hom_{D^b_T(X)}(B,C[k])$$ and
$$\Hom^\udot_X(B,C) = \bigoplus_{k\in \Z} \Hom^k_X(B,C).$$
We want to describe the space of homomorphisms $B \to C$ by looking at 
their restrictions $j^*_SB \to j^*_SC$ to each stratum $S$.  
These restrictions for different strata are constrained by a compatibility 
condition which can be %thought of as living on $N_R\cap S$ for each pair $R\subset \bar{S}$.  It can be 
described formally using the adjunction
\[\Hom^\udot_S(j_S^*B, j_S^*C) = \Hom^\udot_\M(B, j_{S*}j_S^*C).\]
To simplify notation, define $\Phi_S = j_{S*}j^*_S$.  Then the restriction
of a morphism to $S$ can be rephrased as the map
\begin{equation}\label{morphism localization}
\Hom^\udot_\M(B, C) \to \Hom^\udot_\M(B, \Phi_S C)
\end{equation}
obtained by composing with the adjunction map 
$C \to \Phi_S C$.
The compatibility condition arises because the functoriality of $\Phi_S$
gives rise to a commutative square
\begin{equation}\label{compatibility}
\xymatrix{\Hom^\udot_\M(B, C) \ar[r]\ar[d] & \Hom^\udot_\M(B, \Phi_S C) \ar[d]\\
\Hom^\udot_\M(B, \Phi_R C) \ar[r] & \Hom^\udot_\M(B, \Phi_R \Phi_S C)
}\end{equation}
for any pair of strata $R$, $S$ with $R \subset \ol{S}$.

\begin{theorem}\label{big localization theorem}
Consider a pair of objects $B \in D^*_T(\M)$ and $C\in D^!_T(\M)$.  
The group $\Hom^k_\M(B, C)$ vanishes when $k$ is odd, and
the localization \eqref{morphism localization}
identifies $\Hom^\udot_\M(B,C)$ with the set of tuples 
$$(f_S)\in\bigoplus_{S\in \sS}\Hom^\udot_\M(B, \Phi_S C)$$ 
such that for any strata $R$ and $S$ with
$R \subset \ol S$, $f_R$ and $f_S$ map under \eqref{compatibility} 
to the same element of $\Hom^\udot_\M(B, \Phi_R \Phi_S C)$.
\end{theorem}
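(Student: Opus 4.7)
The plan is to prove both assertions simultaneously by induction on the number of strata, using a filtration by open unions. Fix an ordering $S_1, \dots, S_r$ of the strata such that $U_k := S_1 \cup \cdots \cup S_k$ is open for every $k$, and write $u_k \colon U_{k-1} \hookrightarrow U_k$ and $s_k \colon S_k \hookrightarrow U_k$ for the open and closed inclusions. The base case $k=1$ is Lemma \ref{parity vanishing}(c), and the set of compatible tuples for a single stratum is, tautologically, $\Hom^\udot_{S_1}(s_1^*B, s_1^*C)$.

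For the inductive step, apply $\R\Hom_{U_k}(B|_{U_k}, -)$ to the recollement distinguished triangle
\[s_{k*}s_k^! C' \to C' \to u_{k*}u_k^* C' \to\,, \qquad C' := C|_{U_k}.\]
Using the adjunctions $s_{k*} \dashv s_k^!$ and $u_{k*} \dashv u_k^*$ this becomes a long exact sequence relating $\Hom^\udot_{S_k}(s_k^*B, s_k^!C)$, $\Hom^\udot_{U_k}(B,C)$, and $\Hom^\udot_{U_{k-1}}(B,C)$. By Lemma \ref{parity vanishing}(c) applied to $s_k^*B, s_k^!C \in D^{ev}_T(S_k)$, the first of these vanishes in odd degrees, and by the inductive hypothesis so does the third; the connecting maps in the long exact sequence therefore vanish for parity reasons, and we obtain a short exact sequence
\[0 \to \Hom^n_{S_k}(s_k^*B, s_k^!C) \to \Hom^n_{U_k}(B,C) \to \Hom^n_{U_{k-1}}(B,C) \to 0\]
for every $n$, which is zero in odd degrees. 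This settles the vanishing assertion.

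To identify $\Hom^\udot(B,C)$ with compatible tuples, compare this short exact sequence with the analogous decomposition of compatible tuples: restriction from $U_k$ to $U_{k-1}$ is surjective, and its kernel consists of tuples whose only nonzero entry is $f_{S_k} \in \Hom^\udot_{S_k}(s_k^*B, s_k^*C)$ subject to the compatibility constraint that its composition with every map $s_k^* C \to s_k^*\Phi_T^{U_k} C$ vanish, ranging over $T \in U_{k-1}$ with $S_k \subset \ol{T}$. On the other hand, the distinguished triangle $s_k^!C \to s_k^*C \to s_k^*u_{k*}u_k^*C \to$ together with the parity vanishing of $\Hom^{-1}_{S_k}(s_k^*B, s_k^*u_{k*}u_k^*C)$ (Lemma \ref{shriek and star} applied to $u_{k*}u_k^*C$, which inherits $!$-parity) identifies $\Hom^\udot_{S_k}(s_k^*B, s_k^!C)$ with the kernel of the induced map $\Hom^\udot_{S_k}(s_k^*B, s_k^*C) \to \Hom^\udot_{S_k}(s_k^*B, s_k^*u_{k*}u_k^*C)$. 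Combining the matching short exact sequences with the inductive hypothesis on $U_{k-1}$ completes the induction, provided one can identify these two kernels.

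The main obstacle, and the remaining content of the argument, is this piecewise-versus-global equivalence: an element $f_{S_k}$ composes trivially with $s_k^*C \to s_k^*u_{k*}u_k^*C$ if and only if it composes trivially with $s_k^*C \to s_k^*\Phi_T^{U_k}C$ for every $T \in U_{k-1}$. I would prove this by a secondary induction on the strata of $U_{k-1}$: repeatedly applying the recollement triangle to decompose $u_{k*}u_k^*C$ one stratum at a time and applying $s_k^*$ and $\Hom_{S_k}(s_k^*B,-)$ yields a tower of long exact sequences whose connecting maps all vanish by the parity estimates already in hand, so the obstruction to testing the vanishing piecewise is zero. Granting this equivalence, the two short exact sequences match and the induction closes.
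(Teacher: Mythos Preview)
Your proposal is correct and follows essentially the same route as the paper: induction on the number of strata via the recollement triangle for a closed stratum, with parity vanishing (Lemmas~\ref{parity vanishing} and~\ref{shriek and star}) forcing the long exact sequences to split. The difference is in packaging. The paper rephrases the statement as left-exactness of the three-term complex \eqref{localization exact sequence} (see Remark~\ref{left exact}) and then applies that functor to the whole triangle $j^{}_{S_0*}j_{S_0}^!C \to C \to j^{}_{U*}j_U^*C$; the snake lemma then delivers both the kernel identification and the compatibility with tuples in one stroke, with the inductive hypothesis applied directly to the two outer terms.

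Two points worth noting. First, you assert that restriction of compatible tuples from $U_k$ to $U_{k-1}$ is surjective, but you do not prove it, and it is not obvious a priori (it amounts to extending a compatible family by a suitable $f_{S_k}$). Fortunately your argument does not actually need this: left-exactness of the bottom row together with the isomorphisms on the outer vertical maps already forces the middle vertical map to be an isomorphism by an elementary diagram chase. Second, your ``secondary induction'' for the main obstacle is correct in outline---filtering $u_k^*C$ by closed strata and using that each $j_{T*}j_T^!C$ has even cohomology along $S_k$ (Lemma~\ref{parity vanishing}(b) plus condition~(C$'$)) gives the required injectivity---but this is exactly the work that the paper's snake-lemma formulation absorbs automatically.
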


%For instance, if we take $B = \Q_{\M,T}$, then
%we get an isomorphism between $\Hom^\udot_\M(B, C) = H^\udot_T(\M; C)$,
%and the kernel of a map 
%$\bigoplus_S H^\udot_T(S; j_S^* C) \to 
%\bigoplus_{R \subset \overline S} H^\udot_T(R; j_R^*j^{}_{S*}j_S^*C)$.

\begin{remark}\label{left exact}
We can express the second statement of the theorem in another 
way that will be useful.  Fix $B$, and consider the 
commutative square 
\eqref{compatibility} as a functor of $C$.
Negating one of the arrows in \eqref{compatibility} gives
a functor
\begin{equation} \label{localization exact sequence}
\Hom^\udot_\M(B, -) \to  \bigoplus_S \Hom^\udot_\M(B,\Phi_S(-)) \to 
\mathop{\bigoplus_{R \subset \ol{S}}}_{R \neq S} \Hom^\udot_\M(B,  \Phi_R\Phi_S(-))
\end{equation}
from $D^b_T(\M)$ to the category of three-term complexes
of graded vector spaces.  Theorem \ref{big localization theorem} 
asserts that the evaluation of this functor on $C$ is left-exact.
\end{remark}

\smallskip
\begin{proof}
We use induction on the number of strata in $\sS$.
If $|\sS| = 1$, then the parity vanishing of 
$\Hom^k_\M(B, C)$ follows from Lemma \ref{parity vanishing}(c), while 
the second statement is trivial.

Now suppose $|\sS| > 1$, and assume the theorem is true for all
spaces with fewer strata.  
let $S_0$ be a closed stratum of $\sS$, and let 
%$\Phi_U = j_{U*}j_U^*$, where 
$U = \M \setminus S_0$.  Applying 
\eqref{localization exact sequence}
to the distinguished triangle $j^{}_{S_0*}j_{S_0}^!C \to C \to j^{}_{U*}j_U^*C$
gives a long exact sequence of chain complexes.  
In fact, it breaks into short exact sequences of chain
complexes.  For the term $\Hom^\udot_\M(B, -)$,
this follows from the induction hypothesis, since 
by adjunction $\Hom_\M^\udot(B, j^{}_{S_0*}j_{S_0}^!C)$
and $\Hom_\M^\udot(B, j^{}_{U*}j_U^*C)$ can be expressed as homomorphisms on
$S_0$ and $U$ between objects which satisfy the hypotheses of the theorem,
and so they vanish in odd degrees.
For the second term, we have 
$\Hom^\udot_\M(B, \Phi_S C) = 
\Hom^\udot_S(j_S^*B, j_S^*C)$, which vanishes in odd degrees
whenever $C \in D^!_T(\M)$, by Lemma \ref{parity vanishing} and Lemma \ref{shriek and star}.
For the third term, just note that 
$\Phi_R\Phi_S j^{}_{S_0*}j_{S_0}^!C = 0$ whenever $R \neq S$.

The theorem now follows from the snake lemma if we can show that
the chain complexes \eqref{localization exact sequence}
coming from $j^{}_{S_0*}j_{S_0}^!C$ and $j^{}_{U*}j_U^*C$
are left exact.  In both cases this follows from applying the 
inductive hypothesis, again using adjunction to express the first term
in \eqref{localization exact sequence} as 
homomorphisms on $S_0$ and $U$.   Note that the last entry of the 
chain complex for $j^{}_{U*}j_U^*C$ will
contain extra terms of the form $\Hom^\udot_\M(B,  \Phi_{S_0}\Phi_S(C))$
which do not appear in the complex for $j_U^*C$, but they
do not affect exactness on the left.   
\end{proof}

\vspace{-\baselineskip}
\subsection{Localization} \label{refined localization}
We now use Theorem \ref{big localization theorem} to describe some cohomology
and homomorphism groups of $D^{b}_T(\M)$ in terms of modules 
over a sheaf of rings on a finite poset $\wh\sS$.  
Elements of this poset are of two types: (1) strata $S \in \sS$, and
(2) pairs $(R,S)$ of strata with $R \subset {\ol S}$, $R \neq S$.
The partial order $\preceq$ on this set is given by letting 
$(R, S) \preceq R$ and $(R, S) \preceq S$ for all $(R, S)$
of type (2); two different elements which are both of type (1) or of
type (2) are incomparable.  Note that the closure relations among the
strata do not appear in this partial order.  We consider sheaves on 
this poset as described in Section \ref{sheaves on posets}.
We will abuse notation slightly and denote the stalk of 
a sheaf $\cS$ on $\wh\sS$ at $(R,S)$ by $\cS(R,S)$ rather
than $\cS((R,S))$.  We will not need the earlier notation
$\cS(U,V) = \ker(\cS(U) \to \cS(V))$, so this should not
cause confusion.

Define a localization functor $\Loch$ from 
$D^b_T(\M)$ to sheaves of $H_T^\udot(pt)$-modules 
on $\wh\sS$ by defining its stalks to be
$$(\Loch B)(S) := H^\udot_T(S;B) = H^\udot_T(\M; \Phi_S B)$$ and
$$(\Loch B)(R,S) := H^\udot_T(\M; \Phi_R\Phi_S B) = H^\udot_T(R; j_R^*j_{S*}j_S^*B)$$
and letting the maps between the stalks be induced by the 
natural maps $\Phi_R B \to \Phi_R\Phi_S B$ 
and $\Phi_S B \to \Phi_R\Phi_S B$. 
Note that if there exists a $T$-invariant 
tubular neighborhood of $R$ as in Remark \ref{tubular neighborhood remark},
then $(\Loch B)(R,S)$ can be described more geometrically as 
$H^\udot_T(U_R\cap S; B)$.

Let $\ulA$ denote the constant sheaf on $\wh{\sS}$ with stalk 
the polynomial ring $A = H^\udot_T(pt)$.

\begin{lemma}\label{tensor} For any $B,C \in D^b_T(\M)$, there is a natural 
morphism
\begin{equation}\label{tensor map}
\phi_{B,C}\colon \Loch B \otimes_{\ulA} \Loch C \to \Loch (B\otimes C).
\end{equation}
If $D$ is another object in $D^b_T(\M)$, then the two maps
\[
\phi_{B\otimes C, D} \circ (\phi_{B,C} \otimes \id_D),
\phi_{B, C\otimes D} \circ (\id_B\otimes \phi_{C,D})\colon \Loch B \otimes_{\ulA} \Loch C \otimes_{\ulA} \Loch D \to \Loch(B \otimes C \otimes D)
\]
are equal.
If $B = \R_{\M,T}$, the resulting maps 
\[
H^\udot_T(S) \otimes_{H_T^\udot(pt)} H^\udot_T(S;C)  \to H^\udot_T(S; C), \,\,\,\,S \in \sS 
\]
give the usual action of cohomology.
\end{lemma}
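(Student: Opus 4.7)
The plan is to build $\phi_{B,C}$ from two standard pieces of monoidal structure in the equivariant derived category: the cup product on equivariant hypercohomology and the lax monoidal structure of derived pushforward. I will construct $\phi_{B,C}$ stalkwise on $\wh\sS$, verify compatibility with the restriction maps so that it assembles into an $\ulA$-module morphism, and then check associativity and the unit property.

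At each $x \in \wh\sS$, the stalk $(\Loch B)(x)$ has the form $H^\udot_T(\M; \Phi_x B)$ where $\Phi_x$ is either $\Phi_S$ or $\Phi_R\Phi_S$, i.e.\ an iterated composition of functors $j_*j^*$. Because $j^*$ is strong monoidal and $j_*$ is lax monoidal (as the right adjoint of a strong monoidal functor), each $\Phi_x$ carries a natural transformation $\Phi_x B \otimes \Phi_x C \to \Phi_x(B\otimes C)$ obtained by iterated composition. Combined with the cup product $H^\udot_T(\M; X) \otimes H^\udot_T(\M; Y) \to H^\udot_T(\M; X\otimes Y)$, this gives the desired stalk component of $\phi_{B,C}$. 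The restriction maps of $\Loch$ are induced by the adjunction units $\Phi_S \to \Phi_R\Phi_S$; their compatibility with the stalk components is a diagram chase using naturality of the lax monoidal and adjunction structures. Since the cup product and the lax monoidal maps are all $A$-linear, the resulting morphism is $A$-bilinear and therefore factors through $\otimes_{\ulA}$, defining $\phi_{B,C}$.

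For associativity, the required equality follows from the coherence of the monoidal structure on $D^b_T(\M)$ together with the associativity of cup product and of the lax monoidal structure with respect to iterated tensor products. For the unit statement, take $B = \R_{\M,T}$ and unwind: the lax monoidal map $\Phi_x \R_{\M,T} \otimes \Phi_x C \to \Phi_x(\R_{\M,T} \otimes C) = \Phi_x C$ composed with the cup product gives the module action of the ring $H^\udot_T(\M; \Phi_x \R_{\M,T})$ on $H^\udot_T(\M; \Phi_x C)$. For a stratum $x = S$, we have $\Phi_S\R_{\M,T} = j_{S*}\R_{S,T}$, so $H^\udot_T(\M; \Phi_S \R_{\M,T}) = H^\udot_T(S)$, and the map is the usual action, as required.

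The main obstacle is the careful bookkeeping to verify that all coherence diagrams commute simultaneously: naturality in $B$ and $C$, compatibility with the sheaf restriction maps, associativity, and the unit axiom. No individual diagram is deep; every square reduces to standard coherence properties of the monoidal structure on $D^b_T(\M)$ and of the adjunction $(j^*, j_*)$ for an inclusion. The point of the lemma is really to organize these compatibilities cleanly enough that in the next step one can recognize $\Loch(\IC)$ as a ring object in the category of sheaves on $\wh\sS$ and lift its multiplication back to the derived category.
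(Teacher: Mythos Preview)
Your proposal is correct and is essentially an unpacked version of the paper's own argument. The paper observes in one sentence that $j_S^*$, $j_{S*}$, and $H^\udot_T$ are each ``modular functors'' (i.e.\ lax monoidal), so their composite $\Loch$ is as well; you spell out the same content by writing down the lax monoidal structure maps stalkwise and invoking the standard coherence for the adjunction $(j^*,j_*)$ and the cup product.
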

\begin{proof} 
A more precise statement of the associativity constraint on $\phi$ 
should include the natural isomorphisms
between the different ways of associating the triple tensor products.  
This, together with the existence of a morphism $\ulA \to \Loch \R_{\M,T}$
compatible with the isomorphisms $\ulA \otimes_\ulA \cB \cong \cB \cong \cB \otimes_\ulA \ulA$ \,and\,
$\R_{\M,T} \otimes B \cong B \cong B \otimes \R_{\M,T}$, is 
the definition of the statement
that $\Loch$ is (or forms part of) a modular functor from
$D^b_T(\M)$ to $\ulA$-modules.  The functors $j_{S*}$, $j_S^*$, 
and $H^\udot_T$ are all modular functors, which implies that
$\Loch$ is also.
\end{proof}

In particular this means that the sheaf 
$\wh\cA = \Loch \R_{\M,T}$ is a sheaf of graded rings on $\wh\sS$
with multiplication given by the cup product, and
for any $B \in D^b_T(\M)$ the sheaf $\Loch B$ is naturally a
(left) graded $\wh \cA$-module.  
Our assumptions on parity vanishing imply
that $\wh\cA$ is commutative and the left and right module structures 
on $\Loch B$ coincide.

Applying the associativity
constraint on $\phi$ to 
\[B \otimes \R_{\M,T} \otimes C \cong B \otimes C\]
(note that the two ways of constructing this isomorphism
are equal), we conclude that $\phi_{B,C}$ descends to a 
naural map
\begin{equation}\label{tensor over A}
 \Loch B \otimes_{\wh\cA} \Loch C \to \Loch (B\otimes C).
\end{equation}
(Again the more precise statement is that
$\Loch$ is a modular functor from $D^b_T(\M)$ to
$\wh\cA-\mathop{mod}$.)
Lemma \ref{parity vanishing} implies that this map
is an isomorphism if both $B$ and $C$ are in 
$D^*_T(\M)$: the second part of statement (c) 
implies that it is an isomorphism at points of $\sS$, 
and (b) implies that it is an isomorphism at points
$(R, S) \in \wh\sS \smallsetminus \sS$.
Theorem \ref{big localization theorem} can now be reformulated as 
follows.

\begin{theorem} \label{localized sheaves}
If $B \in D^*_T(\M)$, $C \in D^!_T(\M)$, then the natural 
map 
\[\Hom^\udot_\M(B,C) \to \Hom^\udot_{\wh\cA-\mathop{mod}}(\Loch B, \Loch C)\]
is an isomorphism.
In particular, taking $B = \R_{\M,T}$, the global sections of $\Loch C$ are
canonically isomorphic to $H^\udot_T(\M; C)$.
\end{theorem}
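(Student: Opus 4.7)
The plan is to unpack the definition of $\wh\cA$-module homomorphisms of sheaves on $\wh\sS$ and match the resulting data with the parametrization of $\Hom^\udot_\M(B,C)$ provided by Theorem \ref{big localization theorem}. By Proposition \ref{stalks are enough}, a morphism $\alpha\colon\Loch B\to \Loch C$ is precisely a collection of $\wh\cA$-linear stalk maps $\alpha_x$ for $x\in\wh\sS$ which intertwine the restriction maps. The first step is to observe that the maps $\alpha_{R,S}$ at the pair elements carry no extra information. Lemma \ref{parity vanishing}(b), applied to $j_S^*B,j_S^*C\in D^{ev}_T(S)$, produces canonical isomorphisms
\[
(\Loch B)(R,S)\;\cong\;(\Loch B)(S)\otimes_{\wh\cA(S)}\wh\cA(R,S),
\]
and similarly for $C$. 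Hence any $\wh\cA(R,S)$-linear $\alpha_{R,S}$ that is compatible with $\alpha_S$ under the restriction $(\Loch B)(S)\to(\Loch B)(R,S)$ is uniquely obtained from $\alpha_S$ by base change. The data of $\alpha$ therefore reduces to a family $(\alpha_S)_{S\in \sS}$ subject to the single constraint that, for each pair $R\subsetneq \ol S$, the induced base-change map at $(R,S)$ agrees with the restriction of $\alpha_R$.

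The second step is to identify each $\alpha_S$ with a morphism in the derived category. Lemma \ref{parity vanishing}(c) gives a canonical isomorphism
\[
\Hom^\udot_S(j_S^*B,j_S^*C)\;\xrightarrow{\;\sim\;}\;\Hom^\udot_{\wh\cA(S)}\bigl((\Loch B)(S),(\Loch C)(S)\bigr),
\]
and the $(j_S^*,j_{S*})$ adjunction rewrites the left hand side as $\Hom^\udot_\M(B,\Phi_S C)$. Writing $f_S$ for the element so associated to $\alpha_S$, naturality of these identifications in $C$ shows that the restriction $(\Loch C)(S)\to (\Loch C)(R,S)$ corresponds to post-composition with the unit $\Phi_S C\to \Phi_R\Phi_S C$, while $(\Loch C)(R)\to(\Loch C)(R,S)$ corresponds to post-composition with $\Phi_R C\to \Phi_R\Phi_S C$. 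These are exactly the two arrows emanating from $\Hom^\udot_\M(B,\Phi_S C)$ and $\Hom^\udot_\M(B,\Phi_R C)$ in the square \eqref{compatibility}, so our compatibility condition on $(f_S)$ is precisely that of Theorem \ref{big localization theorem}. Applying that theorem identifies the tuples $(f_S)$ with $\Hom^\udot_\M(B,C)$. The ``in particular'' statement is then immediate: when $B=\R_{\M,T}$ we have $\Loch B=\wh\cA$, and $\Hom^\udot_{\wh\cA-\mathop{mod}}(\wh\cA,\Loch C)=(\Loch C)(\wh\sS)$ by definition of the module structure.

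The main obstacle I anticipate is the naturality bookkeeping in the second step: one must unravel the explicit formulas for the restriction maps in $\Loch$ --- which are induced by the adjunction units $B\to\Phi_S B$ and $\Phi_R B\to \Phi_R\Phi_S B$ --- and verify that, under the above identifications, they correspond to precisely the horizontal and vertical edges of the square \eqref{compatibility}. This requires care in tracking the action of $j_S^*$ and $j_{S*}$ on unit maps and the functoriality of Lemma \ref{parity vanishing}(b),(c). Once this check is in place, the theorem is a direct corollary of Theorem \ref{big localization theorem}.
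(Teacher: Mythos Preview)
Your approach is essentially identical to the paper's: reduce a sheaf morphism to its stalk data, observe via Lemma \ref{parity vanishing}(b) that the maps at pairs $(R,S)$ are determined by those at singletons, translate the singleton maps and the remaining compatibility into the language of $\Hom^\udot_\M(B,\Phi_S C)$ and $\Hom^\udot_\M(B,\Phi_R\Phi_S C)$ via Lemma \ref{parity vanishing}(c), and invoke Theorem \ref{big localization theorem}.

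One small inaccuracy: you assert $j_S^*C\in D^{ev}_T(S)$ and claim the tensor-product description of $(\Loch C)(R,S)$ ``similarly for $C$''. This is not justified --- $C$ lies in $D^!_T(\M)$, so only $j_S^!C$ is known to be in $D^{ev}_T(S)$; Lemma \ref{shriek and star} gives odd-degree vanishing of $H^\udot_T(S;j_S^*C)$ but not of the cohomology \emph{sheaves} of $j_S^*C$. Fortunately your argument does not use the claim for $C$: the isomorphism $(\Loch B)(R,S)\cong(\Loch B)(S)\otimes_{\wh\cA(S)}\wh\cA(R,S)$ for $B$ alone already forces $\alpha_{R,S}$ to be the base change of $\alpha_S$, which is exactly how the paper argues. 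Just drop the ``and similarly for $C$'' and the proof stands.
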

\begin{proof}
Let $\cB = \Loch B$ and $\cC = \Loch C$.  A map $\cB \to \cC$ consists of
maps $\cB(S) \to \cC(S)$ and $\cB(R,S) \to \cC(R,S)$ over all $S$ and $(R,S)$ in
$\wh\sS$, compatible with the restriction maps in the sheaves $\cB$ and $\cC$.
Because $B \in D^*_T(\M)$, Lemma \ref{parity vanishing}(b) implies that the 
map $\cB(S) \otimes_{\wh\cA(S)} \wh\cA(R,S) \to \cB(R,S)$ is an isomorphism.
Thus the maps $\cB(R,S) \to \cC(R,S)$ are determined by 
the maps $\cB(S) \to \cC(S)$ for $S \in \sS$.  These maps will determine a 
map of sheaves if and only if for each $(R,S) \in \wh\sS$, the composition
\[\cB(R) \to \cC(R) \to \cC(R,S)\]
is equal to 
\[\cB(R) \to \cB(R,S) \stackrel{\sim}{\longleftarrow} \cB(S) \otimes_{\wh\cA(S)} \wh\cA(R,S)
\to  \cC(S) \otimes_{\wh\cA(S)} \wh\cA(R,S) \to \cC(R,S).\]
But by Lemma \ref{parity vanishing}(c) we have isomorphisms
\[\Hom^\udot_{\wh\cA(S)}(\cB(S), \cC(S)) \cong \Hom^\udot_S(B, \Phi_S(C))\]
and
\[\Hom^\udot_{\wh\cA(R)}(\cB(R), \cC(R, S)) \cong \Hom^\udot_R(B, \Phi_R\Phi_S(C)),\]
and with these identifications Theorem \ref{big localization theorem}
shows that the space of homomorphisms $\cB\to \cC$
is isomorphic to $\Hom^\udot_\M(B, C)$.
\end{proof}

\vspace{-\baselineskip}
\subsection{Generalized minimal extension sheaves}\label{GMES} 
We want to use our functor $\Loch$ to describe intersection cohomology. 
We need to assume that $\IC_{\M,T} \in D^*_T(\M)$, or equivalently
that the cohomology sheaves of $\IC_\M$ vanish in odd degrees.  
Since $\IC_{\M,T}$ is Verdier self-dual, this 
also implies that $\IC_{\M,T} \in D^!_T(\M)$.
This condition holds for many interesting singular spaces, including toric
varieties, Schubert varieties, and hypertoric varieties; some
general theorems which imply it can be found in \cite{BriJ}.
(Note, however, that many varieties covered by these results
will \emph{not} have a stratification satisfying our condition 
(C').)

We now define a class of $\wh\cA$-modules called 
{\em generalized minimal extension sheaves}
(GMES) in analogy with the minimal extension sheaves of 
\cites{BBFK2, BreLu} described in Section \ref{pure sheaves}.  
The definition is also very similar to the indecomposable pure
sheaves on moment graphs which were defined in  
\cite{BM} to compute torus-equivariant 
intersection cohomology of Schubert varieties.

For any stratum $S \in \sS$, let 
$\bdy S = \{R \in \sS \mid S \subset \ol{R}, R \neq S\}$.

%To do this, we consider a coarser topology
%on $\wh\sS$, which we call the $\sS$-topology.  The open sets
%in this topology are all sets 
%$\{S\in \sS \mid S \subset U\} \cup \{(R, S)\in \wh\sS \mid S \subset U\}$ 
%where $U\subset \M$
%is an open union of strata.  
%In other words, open sets are pullbacks of open sets in the 
%order topology on  $\sS$ by the map $p\colon \wh\sS \to \sS$
%defined by $p(S) = S$ and $p((R,S)) = S$.

\begin{definition}\label{GMES definition}
We say an $\wh\cA$-module
$\wh\cL$ is a GMES if the following conditions hold:
\begin{enumerate}
\item $\wh\cL(S_0) \cong \wh\cA(S_0)$ for the open stratum $S_0$.
\item For each $S \in \sS$, $\wh\cL(S)$ is a free $\wh\cA(S)$-module.
\item For each $(R, S) \in \wh\sS$, we have 
\[\wh\cL(R,S) \cong \wh\cL(S)\otimes_{\wh\cA(S)} \wh\cA(R,S),\]
and the map $\wh\cL(S) \to \wh\cL(R,S)$ is the natural one 
coming from the tensor product.
\item For each $S \in \sS$ the restriction maps from 
$\wh\cL(S)$ and $\wh\cL(p^{-1}(\bdy S))$ to 
$\bigoplus_{R \in \bdy S} \wh\cL(S,R)$ have the same image.
\item $\wh\cL$ is minimal with respect to conditions 1-4.
\end{enumerate}
\end{definition}

\begin{remark}
Condition 4 is equivalent to saying that $\wh\cL$ is:
(1) generated by global sections, i.e.\ the 
restriction $\wh\cL(\wh{\sS}\,) \to \wh\cL(S)$ is surjective
for every $S \in \sS$, and (2) flabby for the 
coarser topology on $\wh\sS$ whose open sets are
$p^{-1}(U)$ for $U\subset \sS$ open in the order 
topology, where the map $p\colon \wh\sS \to \sS$
is defined by $p(S) = S$ and $p((R,S)) = S$.
\end{remark}

\begin{proposition}\label{GMES is unique}
Any two generalized minimal extension sheaves are 
isomorphic.
\end{proposition}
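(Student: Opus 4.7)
The plan is to construct the isomorphism by induction on strata, matching up $\wh\cL_1$ and $\wh\cL_2$ one stratum at a time, using condition (5) to pin down each stratum-stalk as a minimal graded free cover of an already-determined submodule. First I would enumerate the strata $S_0, S_1, \ldots, S_n$ so that every $U_k := \{S_0, \ldots, S_k\}$ is open in $\sS$, starting with the open dense stratum $S_0$ (the minimum of the poset on $\sS$); equivalently, $\bdy S_k \subseteq U_{k-1}$ for each $k \geq 1$. I will inductively build an $\wh\cA$-module isomorphism $\phi_k \colon \wh\cL_1|_{p^{-1}(U_k)} \to \wh\cL_2|_{p^{-1}(U_k)}$.

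The base case uses condition (1): $\wh\cL_i(S_0) \cong \wh\cA(S_0)$, so I pick any such isomorphism and extend by condition (3) to all pair-stalks $\wh\cL_i(A, S_0)$ via tensoring with $\wh\cA(A, S_0)$. For the inductive step, suppose $\phi_{k-1}$ has been constructed. The only restriction maps out of $\wh\cL_i(S_k)$ go to pair-stalks $\wh\cL_i(S_k, B)$ for $B \in \bdy S_k$, and these all lie in $p^{-1}(U_{k-1})$ since $p((S_k, B)) = B$; so they are already matched by $\phi_{k-1}$. Let $\rho_i \colon \wh\cL_i(S_k) \to \bigoplus_{B \in \bdy S_k} \wh\cL_i(S_k, B)$ be the combined restriction and set $I_i := \im \rho_i$. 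Condition (4) identifies $I_i$ with the image of $\wh\cL_i(p^{-1}(\bdy S_k))$ in the same target, which is computed entirely from data on $p^{-1}(U_{k-1})$; so $\phi_{k-1}$ carries $I_1$ isomorphically onto $I_2$.

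The key step is to interpret condition (5) as saying that $\wh\cL_i(S_k)$ is a \emph{minimal} graded free $\wh\cA(S_k)$-module covering $I_i$ along $\rho_i$, i.e., that $\rho_i$ induces an isomorphism after reducing modulo the augmentation ideal of $\wh\cA(S_k)$. Given this, I choose a lift $\wh\cL_1(S_k) \to \wh\cL_2(S_k)$ of the isomorphism $I_1 \cong I_2$; such a lift exists because $\wh\cL_1(S_k)$ is graded free and $\rho_2$ surjects onto $I_2$. The lift induces an isomorphism on the augmentation-ideal quotients, and therefore by the graded Nakayama lemma it is itself an isomorphism, compatible by construction with the maps to the pair-stalks. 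Condition (3) then extends the isomorphism to $\wh\cL_i(A, S_k)$ for $A \subsetneq \ol{S_k}$ by tensoring over $\wh\cA(S_k)$, completing the inductive step.

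The main obstacle is establishing the minimal-cover interpretation of condition (5), which is stated only as an abstract minimality among subsheaves satisfying (1)-(4). I would argue this by contradiction: if $\wh\cL_i(S_k)$ is not a minimal cover of $I_i$, then it contains a graded free direct summand $F_0$ whose image under $\rho_i$ lies in the augmentation ideal times $I_i$; excising $F_0$ from $\wh\cL_i(S_k)$ and propagating the excision to each pair-stalk $\wh\cL_i(A, S_k)$ via the tensor formula in (3) should yield a proper $\wh\cA$-subsheaf of $\wh\cL_i$ still satisfying (1)-(4), contradicting (5). The delicate point is checking that this excision does not break the image-matching in (4) at any later stratum $S_j$ with $j > k$; but since all restriction maps between strata factor through pair-stalks, and pair-stalks depend on the underlying stratum-stalk functorially via the tensor product in (3), the excision propagates consistently and (4) is preserved at every level.
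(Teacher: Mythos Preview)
Your inductive construction is exactly the standard argument for ordinary minimal extension sheaves, and that is precisely what the paper does here: it gives no proof of its own, deferring entirely to \cite[Theorem 2.3]{BBFK2} and \cite[Theorem 5.3]{BreLu}. The enumeration of strata, the identification $I_1 \cong I_2$ via condition (4), the lift using freeness, and the graded Nakayama step are all as in those references, so your approach and the paper's coincide.

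One small comment on your treatment of condition (5). Your contradiction argument---excising a redundant free summand $F_0$ from $\wh\cL_i(S_k)$ and propagating to the pair-stalks $\wh\cL_i(A,S_k)$ via the tensor formula of (3)---does not quite yield a subsheaf as stated: after shrinking $\wh\cL_i(A,S_k)$, the restriction map from $\wh\cL_i(A)$ (which you have left unchanged) need not land in the smaller module. This is a non-issue in practice, since the paper's own use of (5) in the proof of Theorem \ref{GMES theorem} shows that ``minimal'' is meant stalkwise: each $\wh\cL(S)$ is a minimal graded free $\wh\cA(S)$-cover of the image $I_S$. You may simply take that as the operative meaning of (5) and skip the contradiction step. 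Alternatively, build a GMES $\wh\cL_0$ from scratch by the same induction and then show any $\wh\cL$ satisfying (1)--(4) admits a split surjection onto $\wh\cL_0$; minimality then forces the splitting to be an isomorphism.
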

The proof of this result is essentially the same as for
ordinary minimal extension sheaves; see \cite[Theorem 2.3]{BBFK2} and \cite[Theorem 5.3]{BreLu}.

\begin{theorem} \label{GMES theorem}
Suppose that $\IC_{\M,T} \in D^*_T(\M)$ and that 
\[ \tag{*} \text{for every $(R, S) \in \wh\sS$, the map
$\wh\cA(S) \to \wh\cA(R,S)$ is surjective.}\]  
% for every $(R, S) \in \wh\sS_\M$, the map
 %$\wh\cA(S) \to \wh\cA(R,S)$ is surjective.
Then
$\wh \cL := \Loch \IC_{\M,T}$ is a generalized minimal extension
sheaf.  Furthermore it is rigid, meaning that it has only scalar automorphisms as a 
graded $\wh\cA$-module.
\end{theorem}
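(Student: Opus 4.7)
The strategy is to verify the five defining conditions for $\wh\cL = \Loch\IC_{\M,T}$ to be a GMES, and then to establish rigidity by an inductive argument. Conditions 1, 2, and 3 are essentially automatic from the parity-vanishing assumption: condition 1 holds because $\IC|_{S_0} = \R_{S_0,T}$; condition 2 uses Lemma \ref{parity vanishing}(a) to ensure freeness of $\wh\cL(S)$ over $\wh\cA(S)$; and condition 3 follows from Lemma \ref{parity vanishing}(b) applied with $B = j_R^*\IC$ (using that $\IC \in D^*_T(\M)$ is also in $D^!_T(\M)$ by Verdier self-duality, so that Theorem \ref{big localization theorem} is available in subsequent arguments).

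Condition 4 is the main substantive step. The surjectivity hypothesis $(*)$ implies that the natural map $\wh\cL(R) \to \wh\cL(S,R)$ is itself surjective, so the image of $\wh\cL(p^{-1}(\bdy S))$ in $\bigoplus_{R \in \bdy S} \wh\cL(S,R)$ consists precisely of those compatible tuples realized by sections on the punctured neighborhood. To establish equality with the image from $\wh\cL(S)$, I would use Deligne's inductive construction of $\IC$: choosing an open union $U$ consisting of $S$ together with the strata in $\bdy S$, and letting $\iota\colon U \setminus S \hookrightarrow U$ denote the open inclusion, the sheaf $\IC|_U$ is obtained from $\IC|_{U \setminus S}$ by a truncation of $\iota_*\iota^*\IC$. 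The parity-vanishing hypothesis causes this truncation to assume a particularly clean form on even-degree cohomology sheaves, and translating the resulting exact sequences through $\Loch$ shows that every compatible tuple in the image from $\wh\cL(p^{-1}(\bdy S))$ lifts to $\wh\cL(S)$, the potential obstructions lying in odd or out-of-range degrees that vanish by hypothesis.

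For condition 5 and rigidity, I would follow the inductive template of the analogous proofs for ordinary minimal extension sheaves in \cites{BBFK2,BreLu} and for moment-graph sheaves in \cite{BM}. For minimality: any $\wh\cA$-submodule $\wh\cM \subset \wh\cL$ satisfying conditions 1--4 agrees with $\wh\cL$ on the open stratum by condition 1, and then by induction on increasing codimension, condition 4 for $\wh\cM$ combined with the already-proved condition 4 for $\wh\cL$ forces the images of $\wh\cM(S)$ and $\wh\cL(S)$ in $\bigoplus_R \wh\cL(S,R)$ to agree, which together with freeness (condition 2) yields $\wh\cM(S) = \wh\cL(S)$. Rigidity is analogous: any graded $\wh\cA$-module automorphism of $\wh\cL$ acts on the rank-one free module $\wh\cL(S_0) \cong \wh\cA(S_0)$ by multiplication by a scalar $c$, and the same inductive argument propagates multiplication by $c$ to $\wh\cL(S)$ for every stratum $S$.

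The main obstacle is condition 4, where the support/cosupport conditions characterizing intersection cohomology must be translated through the localization functor and reconciled with the combinatorics of the richer poset $\wh\sS$. This step most strongly uses both the parity-vanishing hypothesis (for controlling degree constraints) and the surjectivity hypothesis $(*)$ (which makes the tensor-product description in condition 3 effective), and it is the step most distinctive to the GMES setting compared to the ordinary MES case of \cites{BBFK2,BreLu}.
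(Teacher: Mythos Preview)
Your outline for conditions 1--3 matches the paper and is fine.  The substantive gap is in your treatment of conditions 4 and 5, and it propagates into your rigidity argument.

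For condition 4, you invoke Deligne's construction and wave at ``out-of-range degrees,'' but the paper's argument is more specific and your sketch omits a step that actually requires work.  Let $U = \bigcup_{R \in \bdy S} R$.  The attaching triangle gives a sequence
\[
0 \to H^\udot_T(S; j_S^!\IC) \to \wh\cL(S) \to H^\udot_T(S; j_S^* j^{}_{U*} j_U^*\IC) \to 0,
\]
which is short exact by parity vanishing.  Condition 4 then reduces to identifying $H^\udot_T(S; j_S^* j^{}_{U*} j_U^*\IC)$ with the image of $\wh\cL(p^{-1}(\bdy S)) \to \bigoplus_{R \in \bdy S} \wh\cL(S,R)$.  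This identification is not automatic: the paper factors the map as a composition and proves separately that the first factor is surjective and the second is injective, each by an induction in the style of Lemma \ref{shriek and star}.  Hypothesis $(*)$ enters in the surjectivity step, not (as you suggest) merely to make $\wh\cL(R) \to \wh\cL(S,R)$ onto.

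For condition 5, your claim that ``same image together with freeness yields $\wh\cM(S) = \wh\cL(S)$'' is simply false: a free module can surject onto a given target without being minimal.  What the paper uses is the perverse degree constraint on $\IC$: by Lemma \ref{parity vanishing}(a), $\wh\cL(S) = H^\udot_T(S; j_S^*\IC)$ is generated over $\wh\cA(S)$ in degrees strictly less than $\codim_\C S$, while the kernel $H^\udot_T(S; j_S^!\IC)$ is generated in degrees strictly greater.  This degree gap is exactly what forces $\wh\cL(S)$ to be the \emph{minimal} free cover of its image, and the same gap drives the rigidity induction.  You allude to degree constraints only in passing and only in connection with condition 4; they are in fact the engine behind condition 5 and rigidity, and your argument does not work without invoking them explicitly.
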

 
\begin{proof}  Put $\IC = \IC_{\M,T}$.
Condition 1 of Definition \ref{GMES definition} 
is immediate, since $\IC|_{S_0} = \R_{S_0,T}$.
Conditions 2 and 3 follow from Lemma \ref{parity vanishing}.
To prove conditions 4 and 5, take any
$S \in \sS$, and let $U = \bigcup_{R \in\bdy S} R$.  We need to show that
$\wh\cL(S) = H^\udot_T(S;j_S^*\IC)$
is the minimal free $\wh\cA(S)$-module which surjects onto
the image of 
\begin{equation}\label{restriction to boundary}
\wh\cL(p^{-1}(\bdy S)) \longrightarrow
%= \IH^\udot_T(U) \longrightarrow 
%\bigoplus_{R>S}  H^\udot_T(\M; \Phi_S\Phi_R\IC)
\bigoplus_{R \in\bdy S} \wh\cL(S,R).
\end{equation}
%which is the same as the image of $\IH^\udot_T(U) = H^\udot_T(\M;\Phi_U\IC) \to 
%\bigoplus_{R > S} H^\udot_T(S; \Phi_S\Phi_R\IC)$.

The long exact sequence
\[ \dots \to H^\udot_T(S;j_S^!\IC) \to H^\udot_T(S; j_S^*\IC) \to H^\udot_T(S;
j^*_Sj^{}_{U*}j^*_U\IC) \to \dots\]
breaks into short exact sequences,  
since the first and third terms vanish in odd degrees by Lemmas \ref{parity vanishing} 
and \ref{shriek and star}.  Combining  
Lemma \ref{parity vanishing}(a) with the degree restrictions on stalks and
costalks of $\IC$, we see that %the free $\wh\cA_S$-module
$H^\udot_T(S; j_S^*\IC)$ is generated in degrees less than
the complex codimension of $S$, while $H^\udot_T(S;j_S^!\IC)$ is generated in degrees
greater than that.  This implies that $\wh\cL(S)$ is the smallest
free $\wh\cA_S$-module which surjects onto 
$H^\udot_T(S; j^*_Sj^{}_{U*}j^*_U\IC)$.

Since $\wh\cL(p^{-1}(\bdy S)) \cong \IH^\udot_T(U)$ by Theorem \ref{localized sheaves}, 
the map \eqref{restriction to boundary} can be factored as
\[\IH_T^\udot(U) \to H_T^\udot(S;j^*_Sj^{}_{U*}j^*_U\IC)\to
\bigoplus_{R\in \bdy S}  
H^\udot_T(S; j_S^*\Phi_R\IC) =  \bigoplus_{R \in\bdy S} \wh\cL(S,R),\]
so our result will follow if we can show that the first map 
is surjective and the second map is injective.
The surjectivity follows from the more general fact that 
$H^\udot_T(U; B) \to H^\udot_T(S;j^*_S B)$ is surjective for any
$B \in D^!_T(U)$, which can be proved by an induction similar
to the proof of Lemma \ref{shriek and star}; the fact that 
the statement holds for $j^{}_{R*}j_R^!B$ if $R \in \bdy S$
follows from the property (*) and 
Lemma \ref{parity vanishing}(b).  
Injectivity of the second map also follows from a more general statement, that 
$H^\udot_T(S;j^*_Sj^{}_{U*}j^*_U B) \to \bigoplus_{R \in\bdy S} H^\udot(\M; \Phi_S\Phi_RB)$
is injective for any object $B \in D^!_T(\M)$.  This again can be proved along the
lines of Lemma \ref{shriek and star}.  

The fact that the sheaf $\wh \cL$ has only scalar automorphisms follows 
by induction on the number of strata, using the degree constraints
on the generators of $\wh\cL_S$ and $H^\udot_T(S; j_S^!\IC)$.
\end{proof}

\vspace{-\baselineskip}
\begin{remark} \label{costalk remark}
It also follows from our  proof that 
the equivariant intersection cohomology 
$\IH^\udot_{T,S}(\M) = H^\udot_T(S;j^!_S\IC)$
with supports along a stratum $S$
 is isomorphic to the kernel of the map
$\wh\cL(S) \to \bigoplus_{\bdy S} \wh\cL(S,R)$.
\end{remark}

\begin{remark}
One example which satisfies all our hypotheses is when $\M = X_\Sigma$
is the toric variety defined by a fan $\Sigma$, and $T$ is the
maximal compact subgroup of the natural complex torus
$T_\C$.  In that case, we recover the theory of 
minimal extension sheaves defined in \cites{BBFK2,BreLu}.
The strata $S_\sigma$ are 
just the $T_\C$-orbits indexed by cones $\sigma \in \Sigma$. 
For any pair of strata $S_\sigma$, 
$S_\tau$ with $S_\tau \subset S_\sigma$, there is an equivariant
deformation retraction of $S_\sigma$ to its intersection with a 
tubular neighborhood of $S_\tau$,
which implies that the sheaf $\wh \cA$ is the pullback of the 
structure sheaf $\cA$ of conewise polynomial functions on
$\Sigma$ via the continuous map $p\colon\wh \Sigma \to \Sigma$,
$\sigma \mapsto \sigma$, $(\tau, \sig) \mapsto \sig$, and
the generalized minimal extension sheaf $\wh \cL$ is the
pullback of the toric minimal extension sheaf $\cL$.
\end{remark}

\begin{remark}
If the property (*) does not hold, it 
is still possible to calculate the sheaf $\Loch \IC$,
but definition \ref{GMES definition} must be modified.
The problem is that although $\wh\cL(S)$ surjects onto
$H^\udot_T(S; j^*_Sj^{}_{U*}j^*_U\IC)$, this module may not be the
same as the image of the map
$\IH^\udot_T(U) \to \bigoplus_{R > S} \wh\cL_{(S,R)}$.
Instead, it must be computed as a submodule of 
$\bigoplus_{R > S} \wh\cL_{(S,R)}$ cut out
by compatibility relations coming from maps to 
$H^\udot_T(S; \Phi_S\Phi_R\Phi_{R'}\IC)$.
For an example of a space where (*) fails, consider the nilpotent cone
in $\mathfrak{sl}_3(\C)$ with the stratification by adjoint orbits,
and the action of the maximal torus $T$ in $SL_3(\C)$.
\end{remark} 

We now use our results to study maps $\ICxIC \to \IC$.
Suppose that such a map
% together with the 
% unit map $u:\R \to \IC$, 
makes $\IC$ into a commutative ring object, as in Theorem \ref{IC is ring object}.
Then Lemma \ref{tensor} implies that $\wh\cL = \Loch \IC$ is a sheaf of commutative
$\wh\cA$-algebras.  With our hypotheses, we will show the converse.

\begin{theorem}\label{lift}
Suppose that our stratified $T$-space $(\M,\sS)$ satisfies properties (A), (B) and (C') of
Section \ref{stratum-by-stratum} and property (*) of Theorem \ref{GMES theorem}, and that $\IC \in D^*_T(\M)$.
Then any commutative $\wh\cA$-algebra structure on $\wh\cL = \Loch \IC$ lifts uniquely
to a commutative ring structure on $\IC$.
\end{theorem}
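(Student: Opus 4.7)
The plan is to deduce Theorem~\ref{lift} directly from the fully faithful property of $\Loch$ on appropriate Hom-sets given by Theorem~\ref{localized sheaves}, together with the tensor-product compatibility of $\Loch$ established in Lemma~\ref{tensor}. The key preliminary observation is that every tensor power $\IC^{\otimes n}$ lies in $D^*_T(\M)$: since $j_S^*(B \otimes C) = j_S^*B \otimes j_S^*C$ and stalks of each factor have cohomology concentrated in even degrees, a standard hypercohomology spectral sequence argument (or direct truncation induction as in the proof of Lemma~\ref{parity vanishing}) shows the tensor product again has cohomology stalks in even degrees. Meanwhile $\IC \in D^!_T(\M)$ by Verdier self-duality of $\IC$, as noted just before Section~\ref{GMES}.

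Given a commutative $\wh\cA$-algebra structure $m\colon \wh\cL \otimes_{\wh\cA}\wh\cL \to \wh\cL$, I would produce the lift $\tilde m$ as follows. Apply Theorem~\ref{localized sheaves} with $B = \IC\otimes\IC \in D^*_T(\M)$ and $C = \IC \in D^!_T(\M)$ to obtain
\[
\Hom^0_\M(\IC\otimes\IC,\,\IC) \;\stackrel{\sim}{\longrightarrow}\; \Hom^0_{\wh\cA\text{-}\mathrm{mod}}\bigl(\Loch(\IC\otimes\IC),\,\wh\cL\bigr).
\]
By the discussion after Lemma~\ref{tensor}, the map $\phi_{\IC,\IC}$ descends to an isomorphism $\wh\cL \otimes_{\wh\cA} \wh\cL \xrightarrow{\sim} \Loch(\IC\otimes\IC)$ because both factors lie in $D^*_T(\M)$. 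Precomposing $m$ with the inverse of this isomorphism produces an element on the right-hand side, which corresponds to a unique morphism $\tilde m\colon \IC\otimes\IC \to \IC$. Uniqueness of the whole lift is immediate from the same isomorphism.

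It remains to check commutativity, associativity, and the unit law, each of which I would verify by localizing both sides of the desired identity and invoking the corresponding property of $m$. For commutativity, $\tilde m$ and $\tilde m \circ \tau$ (with $\tau$ the swap on $\IC\otimes\IC$) both localize to $m$, since the modular-functor compatibilities in Lemma~\ref{tensor} intertwine $\phi_{\IC,\IC}$ with the swap on $\wh\cL\otimes_{\wh\cA}\wh\cL$; so they agree. For associativity, apply Theorem~\ref{localized sheaves} with $B=\IC^{\otimes 3}$ and $C=\IC$: the two composites $\tilde m\circ(\tilde m\otimes\id)$ and $\tilde m\circ(\id\otimes \tilde m)$ localize via the associativity constraint of Lemma~\ref{tensor} to $m\circ(m\otimes_{\wh\cA}\id)$ and $m\circ(\id\otimes_{\wh\cA} m)$, which coincide by associativity of $m$. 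The unit axiom follows analogously from the last sentence of Lemma~\ref{tensor}, which identifies the $\wh\cA$-action on $\wh\cL$ with the cup-product action of $\Loch \R_{\M,T}$. The main obstacle in the plan is verifying that the coherence data implicit in Lemma~\ref{tensor} (that $\Loch$ is a modular functor) really does reduce associativity of $\tilde m$ to that of $m$; this is the one point where the informal statement of Lemma~\ref{tensor} needs to be unpacked carefully, but no new input beyond the modular-functor formalism is required.
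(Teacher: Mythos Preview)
Your proposal is correct and follows essentially the same route as the paper's proof: both arguments observe that $\IC^{\otimes n}\in D^*_T(\M)$ and $\IC\in D^!_T(\M)$, invoke the isomorphism $\wh\cL\otimes_{\wh\cA}\wh\cL\cong\Loch(\IC\otimes\IC)$ from the discussion after Lemma~\ref{tensor}, and then use the full faithfulness in Theorem~\ref{localized sheaves} to lift the multiplication and verify the ring axioms one at a time. Your treatment is in fact slightly more explicit than the paper's on the commutativity check and on the parity-vanishing argument for tensor powers, but there is no substantive difference in strategy.
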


\begin{proof}
Since $\IC$ is self-dual, it also
lies in $D^!_T(\M)$.  Since the tensor product commutes with 
taking stalks, we have $\ICxIC \in D^*_T(\M)$.  
Applying \eqref{tensor over A} gives a 
natural isomorphism $\wh\cL
\otimes_{\wh\cA} \wh\cL\cong \Loch(\ICxIC)$.  
Thus Theorem \ref{localized sheaves} gives an isomorphism
\[\Hom^\udot_{D^b_T(\M)}(\ICxIC,\IC) \to  
\Hom^\udot_{\wh\cA\md}(\wh\cL \otimes_{\wh\cA}\wh\cL, \wh\cL).\]

Let $m\colon \ICxIC \to \IC$ be
the morphism corresponding to the multiplication map 
of $\wh\cL$.  Associativity of this product is expressed
as the equality of two maps $\ICxIC \otimes \IC \to \IC$, 
so another application of Theorem \ref{localized sheaves}
allows us to deduce associativity for $m$ from associativity
of the multiplication on $\wh\cL$.  
The compatibility of the unit map 
$\R_{\M,T} \to \IC$ with $m$ follows in a similar way, since 
its localization gives the natural map $\wh\cA \to \wh\cL$,
and the isomorphism $\R_{\M,T} \otimes \IC \to \IC$ localizes
to give the natural isomorphism 
$\wh\cA \otimes_{\wh\cA} \wh\cL \to \wh\cL$
(see Lemma \ref{tensor}).
\end{proof}

\vspace{-\baselineskip}
\begin{remark}\label{enough}
Note that because $\wh\cL$ is isomorphic to any generalized minimal
extension sheaf by a unique isomorphism, to construct a commutative ring structure
on $\IC$ it is enough to produce any sheaf of commutative $\wh\cA$-algebras
which is a GMES.
\end{remark}

\subsection{The structure sheaf for hypertoric varieties} \label{hypertoric structure sheaf}
Let $\cH$ be a central, unimodular arrangement in the vector space $V$, with 
hyperplanes indexed by a finite set $I$, and consider the hypertoric variety $\M_\cH$
along with the stratification described in Section \ref{Hypertoric basics},
in which the strata $S_F$ are indexed by flats $F \in L = L_\cH$.  
To simplify notation, from now on
we will identify the stratum with the flat that names it in our notation.
Thus our extended poset $\wh L$ will consist of
single flats and pairs $(E,F)$ of flats with
$E < F$, rather than strata and pairs of strata.
In this section we give an explicit description of the structure sheaf $\wh\cA$ of $\wh L$
and use it to conclude that $\M_\cH$ satisfies all of
the hypotheses of Theorem \ref{lift}, thus reducing Theorem \ref{IC is ring object}
to a statement about the minimal extension sheaf $\wh\cL$ on $\wh L$.

% Before defining the sheaf $\wh\cA$ which will be the structure sheaf 
% defined previously, we need to fix some notation.  
We begin by fixing some notation.
For any subset
$I' \subset I$, let $\R[I']$ denote the polynomial ring with generators
$\be_i$ for $i\in I'$, and let $Q_{I'}$ be the ideal in this ring generated
by $\be_i^2$, $i\in I'$.  Let $J_\cH \subset \R[I]$ be the ideal
$$\left< \sum_{i\in I_0}a_ie^{}_{I_0 \smallsetminus \{i\}} 
%\prod_{j\in S\smallsetminus\{i\}} \be_j
\bigmid I_0 \subset I\;\, \text{and}\;\,\sum_{i\in I_0} a_i x_i|_V = 0\right>.$$
Thus the ring $R(\cH)$ introduced in Section \ref{defining R} is equal to $\R[I]/J_\cH$.
% As we noted in Section \ref{defining R}, $J_\cH$ is generated by those terms for which $S$ is a circuit.
For each flat $F$, we get ideals $J_{\cH^F}$ and $J_{\cH_F}$
in the rings $\R[I\setminus F]$ and $\R[F]$, respectively.  We will
abuse notation and use the same symbols $Q_{I'}$, $J_{\cH^F}$, 
$J_{\cH_F}$ to refer to the
ideals that they generate in $\R[I]$, using the obvious inclusions
$\R[I'] \subset \R[I]$ for any $I' \subset I$.

\begin{lemma}\label{ideal identity}
For any flat $f \in L_\cH$, we have the following equality of ideals in $\R[I]$:
\[J_\cH + J_{\cH^F} + Q_{I\smallsetminus F} = J_{\cH_F} + J_{\cH^F} + Q_{I\smallsetminus F}.\]
\end{lemma}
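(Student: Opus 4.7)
The plan is to prove both inclusions separately; the direction $\supseteq$ is essentially automatic, while $\subseteq$ carries all the content.  For $\supseteq$, I would observe that $J_{\cH_F}\subseteq J_\cH$ as ideals in $\R[I]$: a defining relation $\sum_{i\in I_0}a_i\,x_i|_{\pi_F(V)}=0$ of $J_{\cH_F}$ has $I_0\subseteq F$, so each $x_i$ involved factors through $\pi_F$; since $\pi_F\colon V\to\pi_F(V)$ is surjective, this lifts to $\sum a_i\,x_i|_V=0$, so the same polynomial $\sum a_i\be_{I_0\smallsetminus\{i\}}$ is a generator of $J_\cH$.

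For the substantive direction, I would take a generator $r=\sum_{i\in I_0}a_i\be_{I_0\smallsetminus\{i\}}$ of $J_\cH$, with $\sum a_i\,x_i|_V=0$, and split $I_0 = I_0^F\sqcup I_0^c$ where $I_0^F := I_0\cap F$ and $I_0^c := I_0\smallsetminus F$. Writing $r=r_1+r_2$ according to whether the outer summation index lies in $I_0^c$ or $I_0^F$, the crucial observation is that restricting $\sum a_i\,x_i|_V=0$ to $H_F\subset V$ annihilates the $I_0^F$-terms (because $x_i|_{H_F}=0$ for $i\in F$), leaving $\sum_{i\in I_0^c}a_i\,x_i|_{H_F}=0$. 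Setting $I_0^{c,+} := \{i\in I_0^c : a_i\neq 0\}$, I would then note that the case $|I_0^{c,+}|=1$ cannot occur: a one-index relation $a_j\,x_j|_{H_F}=0$ with $a_j\neq 0$ would force $x_j|_{H_F}=0$ and hence $j\in F$, contradicting $j\in I\smallsetminus F$. So either $I_0^{c,+}=\emptyset$ or $|I_0^{c,+}|\geq 2$.

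In the case $I_0^{c,+}=\emptyset$ one has $r_1=0$, and factoring gives $r_2=\be_{I_0^c}\cdot\sum_{i\in I_0^F}a_i\be_{I_0^F\smallsetminus\{i\}}$, whose second factor is directly a generator of $J_{\cH_F}$. The case $|I_0^{c,+}|\geq 2$ is where the real work happens. I would first factor
\[r_1 \;=\; \be_{I_0^F}\cdot \be_{I_0^c\smallsetminus I_0^{c,+}}\cdot \sum_{i\in I_0^{c,+}}a_i\be_{I_0^{c,+}\smallsetminus\{i\}},\]
whose last factor is a genuine generator of $J_{\cH^F}$, placing $r_1$ in $J_{\cH^F}$. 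The main obstacle is the residual $r_2=\be_{I_0^c}\cdot Q$, since $Q=\sum_{i\in I_0^F}a_i\be_{I_0^F\smallsetminus\{i\}}$ need not correspond to any relation on $\pi_F(V)$ and so is not visibly inside $J_{\cH_F}$. The trick I would exploit is that the hypothesis $|I_0^{c,+}|\geq 2$ lets me multiply the $J_{\cH^F}$-generator by $\be_j$ for some chosen $j\in I_0^{c,+}$: every summand other than $a_j\be_{I_0^{c,+}}$ then acquires a factor $\be_j^2\in Q_{I\smallsetminus F}$, and since $a_j\neq 0$ this shows $\be_{I_0^{c,+}}\in J_{\cH^F}+Q_{I\smallsetminus F}$. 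Multiplying by $\be_{I_0^c\smallsetminus I_0^{c,+}}$ promotes this to $\be_{I_0^c}\in J_{\cH^F}+Q_{I\smallsetminus F}$, hence $r_2\in J_{\cH^F}+Q_{I\smallsetminus F}$, which completes the containment.
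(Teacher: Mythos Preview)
Your proof is correct and follows essentially the same approach as the paper: split $I_0$ into its $F$-part and its complement, decompose the generator accordingly, use restriction to $H_F$ to see that the piece supported on $I_0\smallsetminus F$ gives a $J_{\cH^F}$-relation, and then multiply that relation by a single $\be_j$ to trap $\be_{I_0\smallsetminus F}$ in $J_{\cH^F}+Q_{I\smallsetminus F}$. The only difference is that the paper immediately reduces to generators with all $a_i\neq 0$ (which suffices since these generate $J_\cH$), thereby avoiding your auxiliary set $I_0^{c,+}$ and the separate discussion of the $|I_0^{c,+}|\le 1$ cases.
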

\begin{proof} One inclusion is clear, since the generators of $J_{\cH_F}$
form a subset of the generators of $J_{\cH}$.  For the other direction,
consider a generator $f=\sum_{i\in I_0} a_i \be_{I_0 \smallsetminus \{i\}}$
of $J_\cH$ % where $S$ is a circuit.
with $a_i\neq 0$ for all $i\in I_0$.
Partition $I_0$ into $I_1 = I_0 \cap F$ and $I_2 = I_0 \setminus I_1$.
% The key observation for this proof will be the fact that either $S_2 = \emptyset$
% or $|S_2| \geq 2$.
If $I_2 = \emptyset$ then $f \in J_{\cH_F}$.
Otherwise, let $$g = \sum_{i\in I_1} a_i\be^{}_{I_1 \smallsetminus \{i\}}\,\,\,\text{and}\,\,\, 
h = \sum_{j\in I_2} a_j\be^{}_{I_2 \smallsetminus \{j\}},$$ so that
$$f = \be^{}_{I_2}g + \be^{}_{I_1}h.$$
Then $h \in J_{\cH^F}$, and for any $j\in I_2$, we have
$$a_j\be^{}_{I_2} = e_j h \,\,\,- \sum_{k\in I_2\smallsetminus\{j\}} a_k e_j \be^{}_{I_2\smallsetminus\{k\}} 
\,\,\in\,\, J_{\cH^F} + Q_{I\smallsetminus F}.$$
Dividing by $a_j$, we conclude that $e^{}_{I_2}\in J_{\cH^F} + Q_{I\smallsetminus F}$, and therefore
$f \in J_{\cH^F} + Q_{I\smallsetminus F}$.
This completes the proof.
\end{proof}

Recall that for any flat $F \in L_\cH$, the stabilizer of any point of $S_F$
is the subtorus $T_F\subset T$ with Lie algebra $\la F\ra^\bot \subset V^*$.
Then $T/T_F$ is the natural torus that acts on the closure
$\overline{S_F} \cong \M_{\cH^F}$.  
For this reason, we may also think of $T/T_F$ as a quotient of the coordinate torus 
$T^{I\smallsetminus F}$.  Let $$A^F = \Sym\la F \ra = H_{T/T_F}^\udot(pt).$$
Then we have natural inclusions $$A^F \subset A = \Sym V\,\,\,\text{and}\,\,\, 
A^F \subset \R[I\setminus F].$$

Let $\wh\cA$ be the structure sheaf on $\wh L$ induced by
the hypertoric variety $\M_\cH$, as described in Section \ref{refined localization}.
The following proposition gives an explicit description of this
sheaf.   Let
\[S(\cH) = \R[I]/(J_{\cH}+Q_{I}).\]

\begin{proposition}\label{generalized structure sheaf}
 There are canonical isomorphisms
\[\wh\cA(F) \cong A \otimes_{A^F} S(\cH^F)\] and
\[\wh\cA(E,F) \cong A \otimes_{A^F} S(\cH^F) \otimes_{\R[I\smallsetminus E]} S(\cH^E) = 
A \otimes_{A^F} \R[I\setminus F]/(J_{\cH^F} + J_{\cH^E}+Q_{I\smallsetminus F}),\]
and the restriction maps $\wh\cA(F) \to \wh\cA(E,F)$ and $\wh\cA(E) \to \wh\cA(E,F)$
are the obvious ones.
\end{proposition}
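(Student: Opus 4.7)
My plan is to compute the two kinds of stalks directly, using the geometric descriptions from Section~\ref{Hypertoric basics} together with the local tubular neighborhood structure of Proposition~\ref{strata and slices}. The crucial inputs will be the unimodularity of $\cH$, which makes \cite[5.6]{dLS} available to compute cohomology of complements of codimension-three subspace arrangements; parity vanishing of all the cohomology groups in sight, which degenerates the relevant Serre spectral sequences; and Lemma~\ref{ideal identity}, which handles the algebraic bookkeeping at the very end.

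First I would establish the formula for $\wh\cA(F) = H^\udot_T(S_F)$. The stratum $S_F = \mu^{-1}(\circhkhf)$ is a principal $K$-bundle, with $K = T/T_F$, over the base $\circhkhf$, which is the complement in $\hkhf$ of the codimension-three subspaces coming from the restricted arrangement $\cH^F$. Since $\cH^F$ inherits unimodularity from $\cH$, \cite[5.6]{dLS} identifies $H^\udot(\circhkhf)$ canonically with $S(\cH^F) = \R[I\smallsetminus F]/(J_{\cH^F} + Q_{I\smallsetminus F})$, which in particular is concentrated in even degrees. The Borel construction $(S_F)_T$ is a $BT_F$-bundle over $\circhkhf$, canonically isomorphic to the pullback of the principal $K$-bundle $BT \to BK$ along the classifying map $\circhkhf \to BK$ of $S_F \to \circhkhf$. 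The Serre spectral sequence degenerates for parity reasons and delivers the canonical isomorphism $\wh\cA(F) \cong A \otimes_{A^F} S(\cH^F)$ of graded $A$-modules, in which the $A^F$-action on $S(\cH^F)$ is the one arising from pulling back characteristic classes via $\circhkhf \to BK$.

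For $\wh\cA(E,F)$ with $E<F$, I would use the tubular neighborhood model. By Proposition~\ref{strata and slices}, a tubular neighborhood $U_F$ of $S_F$ is $T$-equivariantly of the form $T\times_{T_F}\M_{\cH_F}\times D$, and its intersection with $S_E$ corresponds to $T\times_{T_F} S^{\cH_F}_E \times D$, where $S^{\cH_F}_E$ is the stratum of the slice $\M_{\cH_F}$ indexed by $E \in L_{\cH_F} = [\emptyset,F]$. Equivariant induction together with contractibility of $D$ yields $\wh\cA(E,F) \cong H^\udot_{T_F}(S^{\cH_F}_E)$. Applying the formula just proved to the hypertoric variety $\M_{\cH_F}$ (acting torus $T_F$, flat $E$) gives an isomorphism of $\Sym V(F)$-modules
\[H^\udot_{T_F}(S^{\cH_F}_E) \cong \Sym V(F) \otimes_{\Sym \langle E\rangle_{\cH_F}} S\bigl((\cH_F)^E\bigr),\]
where $\langle E\rangle_{\cH_F} = H_E/\langle F\rangle \subset V(F)$. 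Now Lemma~\ref{ideal identity}, applied to the restricted arrangement $\cH^E$ at the flat $F\smallsetminus E$ (whose restriction is $\cH^F$ and whose localization is $(\cH_F)^E$), lets me replace $J_{(\cH_F)^E}$ by $J_{\cH^E}$ modulo the other ideals already present. Tensoring up from $\Sym V(F)$ to $A$ over $A^F$ then yields the desired form $\wh\cA(E,F) \cong A \otimes_{A^F} \R[I\smallsetminus F]/(J_{\cH^F}+J_{\cH^E}+Q_{I\smallsetminus F})$. The restriction maps $\wh\cA(F)\to\wh\cA(E,F)$ and $\wh\cA(E)\to\wh\cA(E,F)$ come from the adjunction unit morphisms $\R\to\Phi_{S_E}\R$ and $\R\to\Phi_{S_F}\R$ and translate to the obvious quotient homomorphisms on generators; this is easily verified.

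The main obstacle is the algebraic bookkeeping at the end of the second step --- matching the geometric answer from the slice $\M_{\cH_F}$, which is naturally expressed in terms of $(\cH_F)^E$ and the polynomial ring $\R[F\smallsetminus E]$, with the asymmetric form in the proposition, which uses $\cH^E$, $\cH^F$ and the ring $\R[I\smallsetminus F]$. The ideal identity of Lemma~\ref{ideal identity} is designed precisely for this translation, applied here to the restricted arrangement $\cH^E$ rather than to $\cH$ itself.
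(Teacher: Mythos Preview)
Your computation of $\wh\cA(F)$ via the Serre spectral sequence for $(S_F)_T\to \circhkhf$ is fine; it is a direct version of the paper's reduction to the minimal flat.

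The argument for $\wh\cA(E,F)$ has a genuine gap. Proposition~\ref{strata and slices} only furnishes a tubular neighborhood of a \emph{single orbit} $Tp$ in the deep stratum, of the form $T\times_{T_F}\M_{\cH_F}\times D$ with $D$ a disk inside that stratum; it does not give a global product decomposition of a neighborhood of the whole stratum. So equivariant induction together with contractibility of $D$ compute only the $T$-equivariant \emph{stalk} of the pushforward at $Tp$, namely $H^\udot_{T_F}(S^{\cH_F}_E)$; they do not compute the global group $\wh\cA(E,F)$, which also sees the topology of the deep stratum and hence acquires an extra tensor factor $H^\udot(S_F/T)\cong S(\cH^F)$. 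For three concurrent lines in the plane with deep flat $\{1\}$, your recipe gives the two-dimensional $H^\udot_{U(1)}(\H\smallsetminus\{0\})$, whereas the correct answer (the example after the proposition) is four-dimensional. Your final ``tensoring up from $\Sym V(F)$ to $A$ over $A^F$'' cannot supply this missing factor: $\Sym V(F)$ is a \emph{quotient} of $A$, not a subring, so there is no such base-change, and in any case no change of equivariance group manufactures the ring $S(\cH^F)$.

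The paper closes this gap differently. It first restricts to the closure of the larger stratum of the pair (this is the legitimate ``$A\otimes_{A^F}$'' step, since the relevant stabilizer acts trivially there), and then writes down the explicit ring map from the asserted tensor product into $\wh\cA(E,\emptyset)$ by tracking the generators $e_i$ through the hyperk\"ahler quotient. Lemma~\ref{ideal identity} rewrites the source as $S(\cH^E)\otimes_\R S(\cH_E)$; a collapsing spectral sequence over the quotient of the deep stratum, whose fiber is exactly your slice computation, matches graded dimensions; and a Leray--Hirsch argument gives surjectivity and hence a ring isomorphism. Your local slice answer is the correct fiber input here --- what is missing is the globalization over the base together with the multiplicative control that the explicit map provides.
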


\begin{proof} First, note that tensoring with $A$ over $A^F$ is just
the change of coefficients that gives $T$-equivariant cohomology from
$T(F)$-equivariant cohomology, so the general case follows from the case 
where $F = \emptyset$ is the minimal flat.

We obtain the identification of $\wh\cA(\emptyset) = H^\udot_T(S_\emptyset)$
with $S(\cH)$ by noting that $T$ acts freely on $S_\emptyset$,
so there is an isomorphism of rings $H^\udot_T(S_\emptyset) \cong H^\udot(S_\emptyset/T)$.
The moment map $\mu$ identifies $S_\emptyset/T$ 
with the complement $$M(\cH) := \hkv \setminus \bigcup_{i\in I} \hkhi$$ of the 
tripled arrangement.
By \cite[5.6]{dLS}
this ring is naturally isomorphic to the quotient
$\R[I]/(J_\cH+Q_I)$.\footnote{The generators of $Q_I$ were inadvertently
omitted from the statement of \cite[5.6]{dLS}.}  Note that
the formula in \cite{dLS} has $\sum_{i\in I_0} \sgn(a_i) \be_{I_0\smallsetminus\{i\}}$
in place of the generator $\sum_{i\in I_0}a_i \be_{I_0\smallsetminus\{i\}}$ of
$J_\cH$, but since we assume $\cH$ is unimodular, these are the same.

In order to pin down the restriction maps in the structure sheaf we need
to describe this isomorphism more precisely.  In \cite{dLS} the generator
$\be_i$ corresponding to the $i^\text{th}$ hyperplane is the pullback
of a generating class in the degree two cohomology of 
$M(\cH_{\{i\}}) \cong \R^3 \setminus \{0\}$
via the natural quotient map.  This class can be given another way,
using the construction of $\M_\cH = \Psi^{-1}(\hkv)/T_V$ as a hyperk\"ahler quotient of 
$\H^I$.  The open stratum $S_\emptyset$ is a quotient of
$\Psi^{-1}(\hkv)\cap (\H^\times)^I$ by the action of the subtorus $T_V\subset T^I$, 
which acts freely.  Thus we have a ring isomorphism
$\wh\cA(\emptyset) \cong H^\udot_{T^I}(\Psi^{-1}(\hkv)\cap (\H^\times)^I)$, giving rise to
a natural homomorphism $\R[I] = H^\udot_{T^I}(pt) \to \wh\cA(\emptyset)$.  

We claim that this homomorphism is the obvious one which 
sends $\be_i$ to $\be_i$.
This is because the projection 
$S_\emptyset/T \cong M(\cH) \to M(\cH_{\{i\}})$
can be covered by the $T^I$-equivariant 
projection $\Psi^{-1}(\hkv)\cap (\H^\times)^I \to \H^\times$
onto the $i^\text{th}$ factor.
% This projection is $T^I$-equivariant, 
% if $\H^\times$ is given as the subspace of $\H^I$ where all
% coordinates except the $i$th are zero, and the $i$th is nonzero.
An easy computation shows that 
$H^\udot_{T^I}(\H^\times) \cong \R[I]/\la e^2_i\ra$, and the image
of $\be_i$ gives a generator of the degree $2$ cohomology of
$\H^\times/T^I = \H^\times/U(1) \cong \R^3 \setminus \{0\}$.

Next, consider the ring $\wh\cA(E,\emptyset)$.  It 
is the target of maps from the sources $S(\cH)=\wh\cA(\emptyset)$ and
$S(\cH^E)  \subset \wh\cA(E)$, and by our identification
of the generators $\be_i$, these maps induce a map
\begin{equation}\label{Tubular neighborhood}
S(\cH^E) \otimes_{\R[I\setminus E]} S(\cH)
\to \wh\cA(E,\emptyset).
\end{equation}
We show that this map is an isomorphism. 
By Lemma \ref{ideal identity}, the ring on the left is 
\[\R[I]/(J_{\cH}+J_{\cH^E}+Q_I) = 
\R[I]/(J_{\cH_E}+J_{\cH^E}+Q_I) \cong S(\cH^E)\otimes_\R S(\cH_E).\]
To understand the right hand side of \eqref{Tubular neighborhood},
choose a splitting $T = T_E \times T^E$.  Then 
$T^E$ acts freely on $\M_{\ge E} := \bigcup_{F \le E} S_F$,
so there is an equivalence of categories 
$D^b_T(\M_{\ge E}) \simeq D^b_{T_E}(\M_{\ge E}/T^E)$ which preserves 
cohomology and commutes with pullbacks and pushforwards 
(see \cite[2.6.2 and 3.4.1]{BerLu}).  
If we let $\jhat_F\colon S_F/T^E \to \M_{\ge E}/T^E$
denote the inclusion, then we see that
$\wh\cA(E,\emptyset)$ is isomorphic to the $T_E$-equivariant
cohomology of 
$B = \jhat_E^*(\jhat_\emptyset)_*\R_{S_\emptyset/T^E, T_E}$.

The group $T_E$ acts trivially on $S_E/T^E$, and using Proposition 
\ref{strata and slices} one can show that $B$
is a locally constant sheaf with $T_E$-equivariant stalk cohomology
which is isomorphic to $S(\cH_E)$.  On the other hand,
$S_E/T^E \cong M(\cH^E)$, so we have a spectral sequence
with $E_2$ term $S(\cH^E)\otimes_\R S(\cH_E)$ converging to 
$\wh\cA(E,\emptyset)$.  It collapses for 
parity reasons, so both sides of \eqref{Tubular neighborhood}
have the same graded dimension.

Thus we only have to show that \eqref{Tubular neighborhood}
is a surjection.  This follows by the Leray-Hirsch theorem,
since the image of $\be_i$, $i\notin F$ give 
classes generating the action of the cohomology of the base $M(\cH^E)$,
while the image of $S(\cH_E)$ gives classes which restrict to
a vector space basis for $H^\udot_{T_E}(B|_x)$ for a point $x\in S_E/T^E$.
\end{proof}

\vspace{-\baselineskip}
\begin{corollary}
The hypertoric variety $\M_\cH$ satisfies the hypotheses of Theorem \ref{lift},
thus commutative ring structures on $\IC$ correspond precisely to commutative
$\wh\cA$-algebra structures on $\wh\cL$.
\end{corollary}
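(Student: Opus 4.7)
The corollary is a verification that the five hypotheses of Theorem \ref{lift} all hold for $\M_\cH$; the plan is simply to check them one by one, appealing mainly to Proposition \ref{generalized structure sheaf} and results already proved earlier in the paper. Conditions (A) and (B) of Section \ref{stratum-by-stratum} are literally the same as conditions (A) and (B) of Section \ref{localization functor}, and were verified for the hypertoric stratification in the corollary immediately following Proposition \ref{strata and slices}. This leaves (C'), condition (*), and the statement that $\IC = \IC_{\M,T}$ lies in $D^*_T(\M)$.

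The condition $\IC \in D^*_T(\M)$ asks that the ordinary cohomology sheaves $\mathcal{H}^k(\IC)|_{S_F}$ (which are constant local systems by (B)) vanish for $k$ odd. This is immediate from Corollary \ref{ic stalk betti}: the Hilbert series of $H^\udot_{x_F}(\IC)$ is $h^{bc}_{\cH_F}(t^2)$, a polynomial in $t^2$, so only even degrees appear.

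To verify (C') and (*), translate from $(R,S) \in \wh\sS$ to flat notation: if $(E,F)$ denotes the pair with $E > F$ as flats (so that $S_E \subset \overline{S_F}$ and $S_E$ plays the role of $R$, $S_F$ that of $S$), then $H^\udot_T(R; j_R^* j_{S*} \R_{S,T}) = H^\udot_T(\M; \Phi_R \Phi_S \R_{\M,T}) = \wh\cA(E,F)$. Proposition \ref{generalized structure sheaf} then realizes
$$\wh\cA(E,F) \;\cong\; A \otimes_{A^F} \R[I\smallsetminus F]/(J_{\cH^F} + J_{\cH^E} + Q_{I\smallsetminus F})$$
as a quotient of a polynomial ring whose generators $e_i$ all have degree two, so it is concentrated in even degrees; this is (C'). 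For (*), the same proposition gives $\wh\cA(F) = A \otimes_{A^F} \R[I\smallsetminus F]/(J_{\cH^F} + Q_{I\smallsetminus F})$, and the restriction map $\wh\cA(F) \to \wh\cA(E,F)$ is simply the canonical projection modding out by the image of $J_{\cH^E}$. This projection is tautologically surjective.

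Once (A), (B), (C'), (*), and $\IC \in D^*_T(\M)$ are in hand, Theorem \ref{lift} applies verbatim: commutative $\wh\cA$-algebra structures on $\wh\cL = \Loch \IC$ are in bijection with commutative ring structures on $\IC$, yielding the corollary. No step in this plan is difficult; all the work was done in building Proposition \ref{generalized structure sheaf}, which was designed precisely to make these verifications transparent. The only point to be careful about is the index convention in the pair $(E,F)$ and the correspondence with strata under the order-reversing identification $S_F \subset \overline{S_E} \iff E \subset F$, but this is purely bookkeeping.
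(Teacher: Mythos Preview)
Your proposal is correct and matches the paper's intent; the corollary is stated there without proof precisely because Proposition \ref{generalized structure sheaf} was set up to make these checks immediate, and your verification fills in exactly those details. One small omission: condition (C') also includes the diagonal case $R=S$, requiring $H^k_T(S_F)$ to vanish in odd degrees, but this follows from the same reasoning applied to $\wh\cA(F)\cong A\otimes_{A^F} S(\cH^F)$, which is likewise a quotient of a polynomial ring on degree-two generators.
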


\begin{remark}
We believe 
that the ring structure on intersection cohomology lifts to the derived category for
arbitrary unimodular arrangements; more precisely, that Theorem \ref{IC is ring object}
holds even in the non-central case.  However, in the proof of Proposition
\ref{generalized structure sheaf}
we needed to assume the arrangement is central in order to apply the 
results of \cite{dLS}.  For non-central arrangements it is still possible
to compute the ring structure on the cohomology $H^\udot(M(\cH))$, using results
of Deligne, Goresky, and MacPherson \cite{DGM}, but it is more difficult
to write down an explicit presentation for this ring, so more care will be needed.
\end{remark}

\subsection{Generalized minimal extension sheaves for hypertoric varieties}
\label{hypertoric GMES}
In this section we show that $\wh\cL = \Loch\IC$ admits a 
unique commutative $\wh\cA$-algebra structure.
%, and that the induced ring structure on $\IH^\udot(\M_\cH)$ agrees with
%that of Corollary \ref{it's a ring}.  
By Remark \ref{enough}, in order 
to define this algebra structure
it is enough to define 
a sheaf $\wh\cR$ of commutative $\wh\cA$-algebras 
and prove that it is a GMES.

Recall the ring $R(\cH) = \R[I]/J_\cH$.
Define the stalks of $\wh\cR$ by 
\[\wh\cR(F) := R(\cH) \otimes_{\R[I\smallsetminus F]} S(\cH^F) \cong 
\R[I]/(J_\cH + J_{\cH^F} + Q_{I\smallsetminus F}),\]
and 
\[\wh\cR(E,F) := R(\cH) \otimes_{\R[I\smallsetminus F]} S(\cH^F)
\otimes_{\R[I\smallsetminus E]} S(\cH^E) \cong
 \R[I]/(J_\cH + J_{\cH^F} + J_{\cH^E}+ Q_{I\smallsetminus F}),\]
and let the restriction maps be the obvious ones.
It is clear that this makes $\wh\cR$ into a sheaf of commutative $\wh\cA$-algebras.

Not surprisingly, there is a close relation between $\wh\cR$
and the sheaf $\cR$ which was defined in Section \ref{defining R}.
Using Lemma \ref{ideal identity}, we have different expressions
for the stalks of $\wh\cR$:
\begin{equation} \label{stalk expressions}
 \wh\cR(F) = R(\cH_F) \otimes_\R S(\cH^F), \;\;\; \wh\cR(E,F) = R(\cH_F) \otimes_\R S(\cH^F_E) \otimes_\R S(\cH^E). 
\end{equation}
Let $\cI \subset \wh\cA$ be the ideal sheaf 
which on a flat $F$ or a pair $(E,F)$ is generated
by $\be_i$, $i\notin F$, and consider the quotients 
$\wh\cA/\cI$ and 
$\wh\cR/\wh\cR\cI = \wh\cR \otimes_{\wh\cA} (\wh\cA/\cI)$.
Their stalks are given by 
\[(\wh\cA/\cI)(F) = (\wh\cA/\cI)(E,F) = A \otimes_{A^F} \R = \cA(F)\]
and 
\[(\wh\cR/\wh\cR\cI)(F) = (\wh\cR/\wh\cR\cI)(E,F) = R(\cH_F) = \cR(F),\]
and in fact we have $\wh\cA/\cI = p^{-1}(\cA)$ and 
$\wh\cR/\wh\cR\cI \cong p^{-1}(\cR)$, where as before
$p$ is the projection $p(F) = F$, $p((E, F)) = F$.
Since the stalks of $p$ are acyclic we have $p_*p^{-1}\cR \cong \cR$, 
giving an isomorphism
\begin{equation}
 \label{relating R and Rhat}
p_*(\wh\cR \otimes_{\wh\cA}\, p^{-1}(\cA)) \cong \cR.
\end{equation}

\begin{example} Let $\cH$ be the central arrangement with three hyperplanes
in $V \cong \R^2$, indexed by $I = \{1,2,3\}$ and
defined by covectors $w_i \in V^*$ satisfying the relation $w_1+w_2+w_3 = 0$.
%We will denote the corresponding variables by $x,y,z$, so all our
%rings will be quotients of $R = \R[x,y,z]$.  
We have $\wh\cR = \wh\cA$ on all elements of $\wh L$ except the
maximal flat $I$.  Therefore the stalks at these points are given by 
\begin{itemize}
\item $\wh\cR(\emptyset) = \R[I]/\la \be_1\be_2 + \be_2\be_3 + \be_1\be_3, \be_1^2, \be_2^2, \be_3^2\ra$
\item $\wh\cR(\{1\},\emptyset) = \R[I]/\la \be_2+\be_3, \be_1^2,\be_2^2, \be_3^2\ra$, 
\item $\wh\cR(\{1\})=\wh\cR(I,\{1\}) = \R[I]/\la \be_2+\be_3, \be_2^2, \be_3^2\ra$,
\end{itemize}
and similarly with $\{1\}$ replaced by $\{2\}$ and $\{3\}$.
The space of sections of this sheaf on $\wh L \setminus \{I\}$
injects into $\bigoplus_{i\in I} \wh\cR(I,\{i\})$, with image isomorphic to
$$\R[I]\big/\la \be_1\be_2\be_3, \be_1\be_2+\be_2\be_3+\be_1\be_3\ra.$$  Then
$R(\cH) = \R[I]\big/\la \be_1\be_2+\be_2\be_3+\be_1\be_3\ra$
is the smallest free $\R[\be_1 - \be_2, \be_1 - \be_3]$-module that surjects onto
this image.\footnote{We note that this is almost the same as the ring of global sections
in Example \ref{asymmetric}.  This is because the two arrangements are almost the
same; they differ only in the coorientation of the third hyperplane, which is responsible
for the sign that appears in Example \ref{asymmetric}.}
\end{example}

\begin{proposition}\label{R is a GMES}
$\wh\cR$ is a GMES.
\end{proposition}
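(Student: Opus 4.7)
The plan is to verify each of the five conditions in Definition \ref{GMES definition} for $\wh\cR$, using the stalk formulas \eqref{stalk expressions}, the description of $\wh\cA$ from Proposition \ref{generalized structure sheaf}, and the established properties of the ordinary minimal extension sheaf $\cR$ on $L_\cH$ from Proposition \ref{psw}.

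Condition 1 is immediate: setting $F=\emptyset$ in \eqref{stalk expressions} gives $\wh\cR(\emptyset) = \R \otimes_\R S(\cH) = S(\cH) = \wh\cA(\emptyset)$. For condition 2, I would choose a vector space splitting $V = \langle F\rangle \oplus W$ so that $A \cong A^F \otimes_\R \Sym W$, yielding $\wh\cA(F) = A \otimes_{A^F} S(\cH^F) \cong \cA(F) \otimes_\R S(\cH^F)$; combined with the decomposition $\wh\cR(F) \cong \cR(F) \otimes_\R S(\cH^F)$ from \eqref{stalk expressions}, this reduces freeness of $\wh\cR(F)$ over $\wh\cA(F)$ to freeness of $\cR(F) = R(\cH_F)$ over $\cA(F)$, which is part of Proposition \ref{psw}. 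Condition 3 is the base change identity $\wh\cR(E,F) \cong \wh\cR(F) \otimes_{\wh\cA(F)} \wh\cA(E,F)$; both sides have explicit presentations from \eqref{stalk expressions} and Proposition \ref{generalized structure sheaf}, and the isomorphism reduces to a direct tensor product calculation using Lemma \ref{ideal identity}.

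Conditions 4 and 5 both exploit the reduction identity \eqref{relating R and Rhat}, which identifies $\wh\cR$ modulo the ideal $\cI \subset \wh\cA$ with the minimal extension sheaf $\cR$ on $L_\cH$. For condition 4 (the flabbiness-style surjectivity), the strategy is an $\cI$-adic filtration argument: given a section $s$ of $\wh\cR$ on $p^{-1}(\partial F)$, reduce modulo $\cI$ to obtain a section $\bar s$ of $\cR$ on $\partial F$, extend it to $\cR(F)$ using flabbiness of $\cR$ (Proposition \ref{psw}), and lift this extension back to $\wh\cR(F)$ using the freeness established in condition 2; then iterate on the graded pieces $\cI^k \wh\cR / \cI^{k+1} \wh\cR$ to produce a section of $\wh\cR(F)$ restricting to $s$. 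For condition 5 (minimality), I would compare stalk Hilbert series: after reducing modulo $\cI$, the stalks of $\wh\cR$ realize the broken circuit $h$-polynomial $h^{bc}_{\cH_F}(t^2)$ by Corollary \ref{Betti number theorem}, and the factor $S(\cH^F)$ in \eqref{stalk expressions} contributes precisely the normal Euler polynomial needed to match the IC stalk Betti numbers of Corollary \ref{ic stalk betti}; since any GMES must realize at least these dimensions, $\wh\cR$ is minimal.

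The principal obstacle is condition 4: the open set $p^{-1}(\partial F)$ includes many pairs $(E', E)$ with $E' > E$ and $E < F$, and sections must extend compatibly across all of them together with the single-flat stalks, which forces the $\cI$-adic lifting to be carried out coherently at the sheaf level rather than one stalk at a time. If the direct argument becomes tangled, an alternative is to present $\wh\cR$ explicitly as a sheaf-theoretic extension of $p^{-1}\cR$ by the normal factors $S(\cH^F)$, from which the required surjectivity is inherited visibly from the flabbiness of $\cR$ and the acyclicity of the projection $p$.
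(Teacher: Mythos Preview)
Your treatment of conditions 1, 2, and 3 is essentially the same as the paper's and is fine.

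For conditions 4 and 5 the paper takes a quite different route, and your $\cI$-adic lifting argument, as you suspect, is where the real gap lies. Two observations you are missing make the paper's proof work cleanly. First, $\wh\cR$ is a quotient of the \emph{constant} sheaf on $\wh L$ with stalk $\R[I]$; hence every element of $\wh\cR(F)$ lifts to $\R[I]$ and therefore extends to a global section of $\wh\cR$. This instantly gives the containment $\Im(\wh\cR(F)) \subset \wh\cR_{\bdy F}$, which is half of condition 4. Second, rather than trying to prove surjectivity of $\wh\cR(F)\to\wh\cR_{\bdy F}$ directly, the paper runs an induction on flats comparing $\wh\cR$ with $\wh\cL=\Loch\IC$, which is already known to be a GMES by Theorem~\ref{GMES theorem}. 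By induction $\wh\cR|_{p^{-1}(\bdy F)}\cong\wh\cL|_{p^{-1}(\bdy F)}$, so $\wh\cR_{\bdy F}\cong\wh\cL_{\bdy F}$; and $\wh\cR(F)\cong\wh\cL(F)$ as $\wh\cA(F)$-modules since both are free on $\overline{R(\cH_F)}\cong\IH^\udot(\M_{\cH_F})$. One then checks that $\Ker_{\wh\cR}:=\ker(\wh\cR(F)\to\bigoplus_{E<F}\wh\cR(F,E))$ sits inside $\cR(F,\bdy F)\otimes_\R S(\cH^F)$ and so vanishes in degrees $\le 2\rk F$, just like $\Ker_{\wh\cL}$; a dimension count in low degrees then forces $\wh\cR(F)\to\wh\cR_{\bdy F}$ to hit all generators. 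This simultaneously establishes conditions 4 and 5.

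Your $\cI$-adic scheme runs into trouble because the ideal $\cI$ is not constant: at a flat $E$ it is generated by the $e_i$ with $i\notin E$, so the filtrations at different stalks of $p^{-1}(\bdy F)$ are not compatible in any obvious way, and the inductive step ``$s-t_0$ lies in $\cI\cdot\wh\cR(p^{-1}(\bdy F))$'' does not have a clean meaning. Moreover, you phrase condition 4 as extending a section from $p^{-1}(\bdy F)$ to $F$, but what is actually required is only that the two \emph{images} in $\bigoplus_{E<F}\wh\cR(F,E)$ coincide; the paper exploits this weaker requirement. Your Hilbert-series argument for condition 5 would be fine once condition 4 is in hand, and in spirit it is close to the paper's dimension count, but the paper folds it into the same inductive comparison with $\wh\cL$ rather than treating it separately.
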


\begin{proof}
It is clear that the sheaf $\wh\cR$ satisfies
Conditions 1 and 3 of Definition \ref{GMES definition}.  
Since $\wh\cR(F) = R(\cH_F) \otimes_\R S(\cH^F)$, it
is a free $\wh \cA(F) = \cA(F) \otimes_\R S(\cH^F)$-module.

Next we prove conditions 4 and 5.  For a $\wh\cA$-module $\wh\cF$
and a flat $F$, define
\[\wh\cF_{\bdy F} := \Im\left(\wh\cF(p^{-1}(\bdy F)) \to \bigoplus_{E < F} \wh\cF(F, E)\right).\]
Then $\wh\cR(F)$ maps into $\wh\cR_{\bdy F}$, because $\wh\cR$ is a quotient of
the constant sheaf on $\wh L$ with stalk $\R[I]$, so any element of 
$\wh\cF(F)$ extends to a global section of $\wh\cF$.

Take a flat $F \ne \emptyset$, and let
$\wh\cL = \Loch\IC$.  Consider 
the exact sequences
\begin{equation}\label{exact sequence 1}
 0 \to \Ker_{\wh\cR} \to \wh\cR(F) \to \wh\cR_{\bdy F}
\end{equation}
and 
\begin{equation}\label{exact sequence 2}
0 \to \Ker_{\wh\cL} \to \wh\cL(F) \to \wh\cL_{\bdy F} \to 0.
\end{equation}
The second sequence is right exact because $\wh\cL$ is a GMES, and
so satisfies condition $4$.  We need to show that the first sequence is also 
right exact. 

We can assume by induction that $\wh\cR$ satisfies conditions 4 and 5 for 
all flats $E < F$.  This means that $\wh\cR$ restricts to a GMES on $\wh{\bdy F}$, 
and so $\wh\cR|_{\wh{\bdy F}} \cong \wh\cL|_{\wh{\bdy F}}$, 
by Proposition \ref{GMES is unique}.  Since both
$\wh\cR$ and $\wh\cL$ satisfy condition 3, this isomorphism
extends to $p^{-1}(\bdy F)$, and so $\wh\cR_{\bdy F}$ and $\wh\cL_{\bdy F}$
are isomorphic.

The middle terms $\wh\cR(F)$ and $\wh\cL(F)$ of \eqref{exact sequence 1} and
\eqref{exact sequence 2} are also isomorphic, since both are 
isomorphic to 
\[\overline{R(\cH_F)} \otimes_\R \wh\cA(F)\cong IH^\udot(\M_{\cH_F})\otimes_\R \wh\cA(F).\]
Thus these modules are both generated in 
degrees less than $(1/2)\codim_\R S_F = 2\rk F$, 
and so $\wh\cL_{\bdy F} \cong \wh\cR_{\bdy F}$ 
is also generated in degrees $< 2\rk F$.

By Remark \ref{costalk remark}, $\Ker_{\wh\cL}$ is
isomorphic to $H^\udot_T(S;j^!_S\IC)$, and so it
vanishes in degrees $\le 2\rk F$.  
It follows that $\wh\cL(F)$ and $\wh\cL_{\bdy F}$
have the same dimension in these degrees, and
hence so do $\wh\cR(F)$ and $\wh\cR_{\bdy F}$.  
Our result will follow if we can show that $\Ker_{\wh\cR}$ 
also vanishes in these degrees, since then
the right map of \eqref{exact sequence 1}
must hit all the generators of $\wh\cR_{\bdy F}$.

To see this vanishing, note that by
\eqref{stalk expressions} the 
quotient of $\wh\cR(F, E)$ by the 
generators $e_i$ for $i \in F \setminus E$
is $R(\cH_E) \otimes_\R S(\cH^F)$.  It follows
that $\Ker_{\wh\cR}$ is contained in the 
kernel of the map
\[\wh\cR(F) \cong R(\cH_F) \otimes_\R S(\cH^F) \to \bigoplus_{E < F} R(\cH_E) \otimes_\R S(\cH^F).\]
But this is just $\cR(F,\bdy F) \otimes_\R S(\cH^F)$, 
so the required vanishing follows from Proposition \ref{Combinatorially semi-small}
and the first sentence of the proof of Corollary \ref{rigid MES}. 
\end{proof}

Thus we get a canonical isomorphism $\wh\cR \cong \wh\cL = \Loch(\IC)$, so
we have induced a ring structure on $\wh\cL$, and hence $\IC$ becomes
a ring object in $D^b_T(\M)$.  The uniqueness of this 
ring structure follows from the following result.

\begin{proposition}
The commutative $\wh\cA$-algebra 
structure on $\wh\cR\cong\wh\cL$ is unique.
\end{proposition}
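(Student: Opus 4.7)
Let $(m_1, u_1)$ and $(m_2, u_2)$ be two commutative unital $\wh\cA$-algebra structures on $\wh\cL$. I would first show the units agree: by condition (1) of Definition \ref{GMES definition}, $\wh\cL(S_0) \cong \wh\cA(S_0)$, so both $u_i$ must send $1 \mapsto 1$ at the open stratum, and rigidity of $\wh\cL$ as a graded $\wh\cA$-module (Theorem \ref{GMES theorem}) propagates this to force $u_1 = u_2 =: u$. Setting $\Delta := m_1 - m_2$, this is a degree-zero graded $\wh\cA$-module morphism $\wh\cL \otimes_{\wh\cA} \wh\cL \to \wh\cL$ satisfying $\Delta(1 \otimes x) = \Delta(x \otimes 1) = 0$, and the task reduces to showing that $\Delta$ vanishes stalkwise.

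The plan is then to argue by induction on the partial order of $L$ that $\Delta|_F = 0$ for every flat $F$. The base case $F = \emptyset$ is immediate from $\wh\cL(\emptyset) = \wh\cA(\emptyset)$ together with the unit axiom. For the inductive step at a flat $F$ of positive rank, assume $\Delta|_E = 0$ for all $E < F$. Condition (3) of Definition \ref{GMES definition} identifies $\wh\cL(F, E)$ with $\wh\cL(F) \otimes_{\wh\cA(F)} \wh\cA(F, E)$, with both $m_i|_{(F,E)}$ obtained as base changes of $m_i|_F$; in particular $\Delta|_{(F, E)}$ is the base change of $\Delta|_F$. Because $\Delta$ is a morphism of sheaves and the restriction $\wh\cL(E) \to \wh\cL(F, E)$ is a ring homomorphism for both $m_i$, the vanishing $\Delta|_E = 0$ forces $\Delta|_{(F, E)}$ to vanish on the image of $\wh\cL(E) \otimes \wh\cL(E)$. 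Using the explicit description of $\wh\cR$ given in Equation \eqref{stalk expressions}, this image together with the $\wh\cA(F, E)$-action generates all of $\wh\cL(F, E)$, so $\wh\cA(F,E)$-bilinearity of $\Delta$ yields $\Delta|_{(F, E)} = 0$. Consequently, the image of $\Delta|_F$ lies in the kernel $K_F := \ker\bigl(\wh\cL(F) \to \bigoplus_{E < F} \wh\cL(F, E)\bigr)$.

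The main obstacle is to deduce $\Delta|_F = 0$ from this containment. The critical input is the degree analysis from the proof of Proposition \ref{R is a GMES}: by Remark \ref{costalk remark}, $K_F \cong H^\udot_T(S_F; j_{S_F}^!\IC)$ is concentrated in degrees strictly greater than $2\,\rk F$, while the $\wh\cA(F)$-module generators of $\wh\cL(F)$ live in degrees strictly less than $2\,\rk F$. Since $\Delta|_F$ is $\wh\cA(F)$-bilinear and of degree zero, its values on a pair of generators $g_\alpha, g_\beta$ lie in degree $\deg g_\alpha + \deg g_\beta < 4\,\rk F$, and the values with $\deg g_\alpha + \deg g_\beta \le 2\,\rk F$ vanish immediately because $K_F$ is zero in that degree range. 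To handle pairs with $\deg g_\alpha + \deg g_\beta > 2\,\rk F$, I would subtract the associativity equations for $m_1$ and $m_2$ to obtain the identity
\[
m_1(\Delta(a,b), c) + \Delta(m_2(a,b), c) \;=\; m_1(a, \Delta(b,c)) + \Delta(a, m_2(b,c)),
\]
and combine this with commutativity, the inductive vanishing on lower-degree pairs, and the explicit structure of $K_F$ as an $\wh\cA(F)$-module to propagate the low-degree vanishing upward. I expect this degree-by-degree interplay between the associativity constraints, the generator degrees of $\wh\cL(F)$, and the module structure of $K_F$ to be the main technical difficulty of the proof.
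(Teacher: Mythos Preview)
Your overall strategy---setting $\Delta = m_1 - m_2$, inducting up the lattice of flats, showing $\Delta$ vanishes on all pairs $(F,E)$ below $F$, and concluding that $\operatorname{im}\Delta|_F \subset K_F$---matches the paper's exactly.  (One quibble: condition (3) of the GMES definition expresses $\wh\cL(F,E)$ as a base change from the \emph{open} flat $E$, not from the closed flat $F$; you state this backwards, but your actual argument only uses the correct direction, namely that the image of $\wh\cL(E)$ generates $\wh\cL(F,E)$ over $\wh\cA(F,E)$, so no harm is done.)

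The gap is in the final step.  You correctly observe that $K_F$ vanishes in degrees $\le 2\rk F$ and that module generators of $\wh\cL(F)$ lie in degrees $<2\rk F$, so $\Delta(g_\alpha,g_\beta)=0$ whenever $\deg g_\alpha + \deg g_\beta \le 2\rk F$.  But for pairs of generators with total degree in the range $(2\rk F, 4\rk F)$ you propose to grind through the associativity identity, and you flag this as the main technical difficulty without completing it.  In fact the abstract associativity identity alone is not enough: to exploit it you would need to factor a high-degree generator as a product of lower-degree elements, and module generators need not factor.

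The paper avoids this entirely with one further observation you are missing: the stalk $\wh\cR(F)=\R[I]/(J_\cH+J_{\cH^F}+Q_{I\smallsetminus F})$ is, under the \emph{specific} ring structure $m$, generated as an algebra by the degree-$2$ classes $e_i$.  Once you know that, the argument is immediate: for $x$ of degree $2$ and any module generator $y$, we have $\deg(xy)\le 2\rk F$, so $\Delta(x,y)=0$ and hence $m(x,-)=m'(x,-)$.  Since every element of $\wh\cR(F)$ is a polynomial in degree-$2$ elements under $m$, and $m(e_i,-)=m'(e_i,-)$, a short induction on degree using associativity of both $m$ and $m'$ gives $m=m'$ on all of $\wh\cR(F)$.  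This replaces your open-ended degree-by-degree bootstrap with a two-line argument.
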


\begin{proof}
Let $m\colon \wh\cR \otimes_{\wh\cA} \wh\cR \to \wh\cR$ be the algebra
structure on $\wh\cR$ that we have already defined, 
and let $m'$ be another one.  We show by 
induction on the rank of a flat $F$ that $m = m'$ on $F$ and $(E, F)$
for any $E \ge F$.  It is obvious that $m = m'$ for the minimal flat 
$\empty$, since $\wh\cR_{\empty} = \wh\cA_{\empty}$.
 
If we know that $m = m'$ at a flat $F$, then the same is true 
for all $(E, F)$ with $E \ge F$, since the map
$\wh\cR(F) \otimes_{\wh\cA(F)} \wh\cA(E,F) \to \wh\cR(E,F)$
is a surjective ring homomorphism for either ring structure.
So we can take $F \ne \emptyset$, and suppose inductively that $m = m'$
on $(F, F')$ for all $F' \le F$. 

The degree restrictions on intersection cohomology stalks
imply that $\wh\cR(F)$ is generated as an $\wh\cA(F)$-module 
in even degrees less than $2\rank F$.  
% This can also be proved purely algebraically,
% using the results of \cite{PS}. 
Similarly, we observed in Remark \ref{costalk remark}
that the kernel of $\wh\cR(F) \to \bigoplus_{F' \le F} \wh\cR(F,F')$
is isomorphic to $H^\udot_T(S;j^!_{S_F}\IC)$, so by the degree restrictions
on intersection cohomology costalks, it is generated as an $\wh\cA(F)$-module in even 
degrees greater than $2\rank F$.  It follows that if $\wh\cR(F)$ has an 
algebra structure which is compatible with the restriction maps, 
then the multiplication by elements in degree $1$ is determined
by the the multiplication on $\wh\cR(F,F')$ for $F' \le F$.  But
$\wh\cR(F)$ with the ring structure $m$ is generated in degree $1$,
and so by associativity we must have $m = m'$ on all elements.
\end{proof}
  
\subsection{Comparing the two localizations}
To complete the proof of Theorem \ref{IC is ring object}, we need to 
compare the two localization functors we have defined.  
As before, we are considering a 
hypertoric variety $\M = \M_\cH$ defined by a central
unimodular arrangement.

For arbitrary
objects $B, C \in D^b_{T,\sS}(\M)$ there is a natural
map $\Loc(B) \otimes_\cA \Loc(C) \to \Loc(B \otimes C)$, 
so our ring structure on $\IC$ induces a ring structure
on the minimal extension sheaf $\cL := \Loc(\IC)$.  
The problem is to show that this agrees with the 
ring structure provided by the isomorphism $\cL \cong \cR$.

Let $\wh\cL = \Loch(\IC)$, with the ring structure
induced from our ring structure on $\IC$ (equivalently,
it is induced from the isomorphism $\wh\cL \cong \wh\cR)$.  
Recall that we have a map
$p\colon \wh{L} \to L$ given by $p((E,F)) = p(F) = F$.
We define a map $\phi\colon p_*\wh\cL \to \cL$ as follows.  Let
$U \subset L$ be an open set, and let $\M_U = \bigcup_{F \in U} S_F$
be the corresponding open subset of $\M$.  The space of 
sections $p_*\wh\cL(U)=\wh\cL(p^{-1}(U))$ is canonically isomorphic to 
$\IH^\udot_T(\M_U)$ by Theorem \ref{localized sheaves}.
This group then maps to $\cL(U)$ by the natural
map $\Gamma_{\IC(\M_U)}$ introduced in Section \ref{localization functor}.
These are all maps of $A = H^\udot_T(pt)$-modules, and they
are compatible with restrictions of open sets, so this
defines a map of $\cA$-modules.  It is also easy to
check that it is a map of rings.

If $U = U_F$ is the smallest open set containing $F$, then
the map $\Gamma_{\IC(\M_U)}$ is just the restriction
$\IH^\udot_T(\M_U) \to H^\udot_T(Tx; \IC)$ to any orbit $Tx \subset S_F$,
followed by the identification of $H^\udot_T(Tx; \IC)$
with $\cL(F) = \cL(U_F)$.  Using Lemma \ref{parity vanishing}(a),
we see that if $s \in \wh\cL(U)$, then the section
$\phi(s)$ sends $F$ to the image of $s_F \in \wh\cL(F)$
in 
\[\wh\cL(F) \otimes_{\wh\cA(F)} \cA(F) = H^\udot_T(S_F; \IC) \otimes_{H^\udot_T(S_F)} H^\udot_T(Tx) \cong H^\udot_T(Tx;\IC) \cong \cL(F).\]
In other words, we can factor $\phi$ as the composition
\[p_*\wh\cL \to p_*(\wh\cL \otimes_{\wh\cA} p^{-1}(\cA)) \to \cL,\]
where the second map is an isomorphism.

Applying the isomorphisms $\wh\cR\cong \wh\cL$, $\cR \cong \cL$,
this second map becomes the map \eqref{relating R and Rhat},
since maps of minimal extension sheaves are rigid.
Thus the ring structure on $\cL$ induced by 
the one on $\IC$ is the same as the one induced
by \eqref{relating R and Rhat} and the isomorphism 
$\cR \cong \cL$, completing the
proof of Theorem \ref{IC is ring object}.

\begin{bibdiv}
\begin{biblist}
\normalsize
\bib{BBFK1}{article}{
   author={Barthel, Gottfried},
   author={Brasselet, Jean-Paul},
   author={Fieseler, Karl-Heinz},
   author={Kaup, Ludger},
   title={Equivariant intersection cohomology of toric varieties},
   conference={
      title={Algebraic geometry: Hirzebruch 70},
      address={Warsaw},
      date={1998},
   },
   book={
      series={Contemp. Math.},
      volume={241},
      publisher={Amer. Math. Soc.},
      place={Providence, RI},
   },
   date={1999},
   pages={45--68},
%   review={\MR{1718136 (2001i:14030)}},
}

\bib{BBFK2}{article}{
   author={Barthel, Gottfried},
   author={Brasselet, Jean-Paul},
   author={Fieseler, Karl-Heinz},
   author={Kaup, Ludger},
   title={Combinatorial intersection cohomology for fans},
   journal={Tohoku Math. J. (2)},
   volume={54},
   date={2002},
   number={1},
   pages={1--41},
   issn={0040-8735},
%   review={\MR{1878925 (2003a:14032)}},
}

\bib{BBD}{article}{
   author={Beilinson, A. A.},
   author={Bernstein, J.},
   author={Deligne, P.},
   title={Faisceaux pervers},
   language={French},
   conference={
      title={Analysis and topology on singular spaces, I},
      address={Luminy},
      date={1981},
   },
   book={
      series={Ast\'erisque},
      volume={100},
      publisher={Soc. Math. France},
      place={Paris},
   },
   date={1982},
   pages={5--171}
%   review={\MR{751966 (86g:32015)}},
}

\bib{BerLu}{book}{
   label={BerL},
   author={Bernstein, Joseph},
   author={Lunts, Valery},
   title={Equivariant sheaves and functors},
   series={Lecture Notes in Mathematics},
   volume={1578},
   publisher={Springer-Verlag},
   place={Berlin},
   date={1994},
   pages={iv+139},
   isbn={3-540-58071-9},
%   review={\MR{1299527 (95k:55012)}},
}

\bib{BD}{article}{
   author={Bielawski, Roger},
   author={Dancer, Andrew S.},
   title={The geometry and topology of toric hyperk\"ahler manifolds},
   journal={Comm. Anal. Geom.},
   volume={8},
   date={2000},
   number={4},
   pages={727--760},
   issn={1019-8385}
%   review={\MR{1792372 (2002c:53078)}},
}

\bib{Bj}{article}{
   author={Bj\"orner, A.},
   title={The homology and shellability of matroids and geometric lattices},
      book={
      title={Matroid applications},
      series={Encyclopedia Math. Appl.},
      volume={40},
      publisher={Cambridge Univ. Press},
      place={Cambridge},
   },
   date={1992},
   pages={226--283}
}
		
\bib{BraLu}{article}{
   label={BraL},
   author={Braden, Tom},
   author={Lunts, Valery A.},
   title={Equivariant-constructible Koszul duality for dual toric varieties},
   journal={Adv. Math.},
   volume={201},
   date={2006},
   number={2},
   pages={408--453},
   issn={0001-8708},
%   review={\MR{2211534 (2006m:14069)}},
}

\bib{Br}{article}{
   label={Br},
   author={Braden, Tom},
   title={Hyperbolic localization of intersection cohomology},
   journal={Transform. Groups},
   volume={8},
   date={2003},
   number={3},
   pages={209--216},
   issn={1083-4362},
%   review={\MR{1996415 (2004f:14037)}},
}

%\bib{Br2}{article}{
%   label={Br2},
%   author={Braden, Tom},
%   title={Remarks on the combinatorial intersection cohomology of fans},
%   journal={Pure Appl. Math. Q.},
%   volume={2},
%   date={2006},
%   number={4},
%   pages={1149--1186},
%   issn={1558-8599},
%}

\bib{BLPW}{article}{
   label={BLPW},
   author={Braden, Tom},
   author={Licata, Anthony},
   author={Proudfoot, Nicholas},
   author={Webster, Ben},
   title={Cohomological symplectic duality},
   status={in preparation} 
}

%\bib{BLPW}{article}{
%   label={BLPW},
%   author={Braden, Tom},
%   author={Licata, Anthony},
%   author={Proudfoot, Nicholas},
%   author={Webster, Ben},
%   title={Gale duality and Koszul duality},
%   status={preprint \texttt{arXiv:0806.3256}} 
%}
			
\bib{BM}{article}{
   label={BraM},
   author={Braden, Tom},
   author={MacPherson, Robert},
   title={From moment graphs to intersection cohomology},
   journal={Math. Ann.},
   volume={321},
   date={2001},
   number={3},
   pages={533--551},
   issn={0025-5831},
%   review={\MR{1871967 (2003g:14030)}},
}
	
\bib{BreLu}{article}{
   label={BreL},
   author={Bressler, Paul},
   author={Lunts, Valery A.},
   title={Intersection cohomology on nonrational polytopes},
   journal={Compositio Math.},
   volume={135},
   date={2003},
   number={3},
   pages={245--278},
   issn={0010-437X},
%   review={\MR{1956814 (2004b:52016)}},
}
\bib{BreLu2}{article}{
   label={BreL2},
   author={Bressler, Paul},
   author={Lunts, Valery A.},
   title={Hard Lefschetz theorem and Hodge-Riemann relations for
   intersection cohomology of nonrational polytopes},
   journal={Indiana Univ. Math. J.},
   volume={54},
   date={2005},
   number={1},
   pages={263--307},
   issn={0022-2518},
%   review={\MR{2126725 (2005k:14043)}},
}
	
\bib{BriJ}{article}{
   author={Brion, Michel},
   author={Joshua, Roy},
   title={Vanishing of odd-dimensional intersection cohomology II},
   journal={Math. Ann.},
   volume={321},
   date={2001},
   number={2},
   pages={399--437},
   issn={0025-5831},
%   review={\MR{1866494 (2003i:14022)}},
}
			
\bib{CS}{article}{
   author={Chang, Theodore},
   author={Skjelbred, Tor},
   title={The topological Schur lemma and related results},
   journal={Ann. of Math. (2)},
   volume={100},
   date={1974},
   pages={307--321},
   issn={0003-486X}
%   review={\MR{0375357 (51 \#11552)}},
}

\bib{DGM}{article}{
   author={Deligne, P.},
   author={Goresky, M.},
   author={MacPherson, R.},
   title={L'alg\`ebre de cohomologie du compl\'ement, dans un espace affine,
   d'une famille finie de sous-espaces affines},
%   language={French},
%   note={Dedicated to William Fulton on the occasion of his 60th birthday},
   journal={Michigan Math. J.},
   volume={48},
   date={2000},
   pages={121--136},
   issn={0026-2285},
%   review={\MR{1786483 (2002b:14023)}},
}

\bib{Gi}{article}{
   author={Ginsburg, Victor},
   title={Perverse sheaves and ${\bf C}\sp *$-actions},
   journal={J. Amer. Math. Soc.},
   volume={4},
   date={1991},
   number={3},
   pages={483--490},
   issn={0894-0347},
%   review={\MR{1091465 (92d:14013)}},
}
	
\bib{GM}{article}{
   author={Goresky, Mark},
   author={MacPherson, Robert},
   title={Intersection homology II},
   journal={Invent. Math.},
   volume={72},
   date={1983},
   number={1},
   pages={77--129},
   issn={0020-9910},
%   review={\MR{696691 (84i:57012)}},
}
\bib{GKM}{article}{
   author={Goresky, Mark},
   author={Kottwitz, Robert},
   author={MacPherson, Robert},
   title={Equivariant cohomology, Koszul duality, and the localization
   theorem},
   journal={Invent. Math.},
   volume={131},
   date={1998},
   number={1},
   pages={25--83},
   issn={0020-9910},
%   review={\MR{1489894 (99c:55009)}},
}

\bib{GZ1}{article}{
   author={Guillemin, V.},
   author={Zara, C.},
   title={Equivariant de Rham theory and graphs},
   note={Sir Michael Atiyah: a great mathematician of the twentieth
   century},
   journal={Asian J. Math.},
   volume={3},
   date={1999},
   number={1},
   pages={49--76},
   issn={1093-6106},
%   review={\MR{1701922 (2001g:58033)}},
}
\bib{GZ2}{article}{
   author={Guillemin, V.},
   author={Zara, C.},
   title={1-skeleta, Betti numbers, and equivariant cohomology},
   journal={Duke Math. J.},
   volume={107},
   date={2001},
   number={2},
   pages={283--349},
   issn={0012-7094},
%   review={\MR{1823050 (2002j:53110)}},
}

\bib{GZ3}{article}{
   author={Guillemin, V.},
   author={Zara, C.},
   title={$G$-actions on graphs},
   journal={Internat. Math. Res. Notices},
   date={2001},
   number={10},
   pages={519--542},
   issn={1073-7928},
%   review={\MR{1832539 (2002c:53134)}},
}

\bib{HP}{article}{
   author={Harada, M.},
   author={Proudfoot, N.},
   title={Properties of the residual circle action on a hypertoric variety},
   journal={Pacific J. Math.},
   date={2004},
   number={2},
   volume={214},
   pages={263--284},
}

\bib{HS}{article}{
   author={Hausel, Tam{\'a}s},
   author={Sturmfels, Bernd},
   title={Toric hyperK\"ahler varieties},
   journal={Doc. Math.},
   volume={7},
   date={2002},
   pages={495--534 (electronic)},
   issn={1431-0635},
%   review={\MR{2015052 (2004i:53054)}},
}

\bib{Ka}{article}{
   label={Ka},
   author={Karu, Kalle},
   title={Hard Lefschetz theorem for nonrational polytopes},
   journal={Invent. Math.},
   volume={157},
   date={2004},
   number={2},
   pages={419--447},
   issn={0020-9910},
%   review={\MR{2076929 (2005k:52023)}},
}
\bib{Ko}{article}{
   label={Ko},
   author={Konno, Hiroshi},
   title={Cohomology rings of toric hyperk\"ahler manifolds},
   journal={Internat. J. Math.},
   volume={11},
   date={2000},
   number={8},
   pages={1001--1026},
   issn={0129-167X},
%   review={\MR{1797675 (2001k:53089)}},
}		
\bib{dLS}{article}{
   author={de Longueville, Mark},
   author={Schultz, Carsten A.},
   title={The cohomology rings of complements of subspace arrangements},
   journal={Math. Ann.},
   volume={319},
   date={2001},
   number={4},
   pages={625--646},
   issn={0025-5831},
%   review={\MR{1825401 (2002d:52009)}},
}
\bib{MV}{article}{
  author={Mirkovi\'c, Ivan},
  author={Vilonen, Kari},
  title={Geometric Langlands duality and representations of algebraic groups over commutative rings},
  journal={Ann. of Math.(2)},
  volume={166},
  date={2007},
  number={1},
  pages={95--143}
}

\bib{OT}{article}{
   author={Orlik, Peter},
   author={Terao, Hiroaki},
   title={Commutative algebras for arrangements},
   journal={Nagoya Math. J.},
   volume={134},
   date={1994},
   pages={65--73},
   issn={0027-7630},
%   review={\MR{1280653 (95j:52025)}},
}

\bib{Pa}{article}{
   label={Pa},
   author={Payne, Sam},
   title={Equivariant Chow cohomology of toric varieties},
   journal={Math. Res. Lett.},
   volume={13},
   date={2006},
   number={1},
   pages={29--41},
}

\bib{Pr}{article}{
   label={Pr},
   author={Proudfoot, Nicholas},
   title={A survey of hypertoric geometry and topology},
   journal={Proc. Int. Conf. on Toric Topology},
   status={to appear},
}

\bib{PS}{article}{
   author={Proudfoot, Nicholas},
   author={Speyer, David},
   title={A broken circuit ring},
   journal={Beitr\"age Algebra Geom.},
   volume={47},
   date={2006},
   number={1},
   pages={161--166},
%   eprint={math.AG:},
   issn={0138-4821},
%   review={\MR{2246531 (2007c:13029)}},
}
 
\bib{PW}{article}{
   author={Proudfoot, Nicholas},
   author={Webster, Benjamin},
   title={Intersection cohomology of hypertoric varieties},
   journal={J. Algebraic Geom.},
   volume={16},
   date={2007},
   number={1},
   pages={39--63},
   issn={1056-3911},
%   review={\MR{2257319}},
}

\bib{Se}{article}{
   label={Se},
   author={Seymour, P.D.},
   title={Decomposition of regular matroids},
   journal={J. Combin. Theory Ser. B},
   volume={28},
   date={1980},
   number={3},
   pages={305--359}
}

\bib{St}{book}{
   label={St},
   author={Stanley, Richard},
   title={Combinatorics and commutative algebra},
   edition={Second edition},
   series={Progress in Mathematics},
   volume={41},
   publisher={Birkh\"auser},
   place={Boston},
   date={1996},
   pages={x+164},
   isbn={0-8176-3836-9},
}

%\bib{Wh}{book}{
%   author={White, N.},
%   title={The homology and shellability of matroids and geometric lattices},
%      series={Encyclopedia of Math.},
%      volume={29},
%      publisher={Cambridge Univ. Press},
%      place={Cambridge},
%   date={1987}
%}

% should be Neil White, Unimodular matroids (pp. 40--52) ??

\bib{Wh}{article}{
   author={White, Neil},
   title={Unimodular matroids},
   conference={
      title={Combinatorial geometries},
   },
   book={
      series={Encyclopedia Math. Appl.},
      volume={29},
      publisher={Cambridge Univ. Press},
      place={Cambridge},
   },
   date={1987},
   pages={40--52},
   review={\MR{921067}},
}

\end{biblist}
\end{bibdiv}
\end{document}